\documentclass[11pt,reqno]{amsart}

\usepackage[dvipsnames]{xcolor}
\usepackage{tikz}
\usepackage{stmaryrd}
\usetikzlibrary{matrix,arrows,decorations.pathmorphing}
\usepackage{graphicx}
\usepackage{amssymb}
 \usepackage[stable]{footmisc}
\usepackage{amsmath,mathtools}
\usepackage{fullpage}
\usepackage{caption}
\usepackage{amsthm}
\usepackage{mathrsfs}
\usepackage{thmtools}
\usepackage{blkarray}
\usepackage{multirow}
\usepackage{picinpar} 
\usepackage{tikz-cd}
\usepackage{color}
\usepackage{verbatim}
\usepackage{hyperref}
\hypersetup{colorlinks=true, linkcolor=black, citecolor = OliveGreen, urlcolor = black}
\usepackage{amssymb}
\usepackage{amsmath}
\usepackage[shortlabels]{enumitem}
\usepackage{colonequals}
\usepackage{faktor}
\usepackage{color}
\usepackage{mathrsfs}
\usepackage[all]{xy}
\setcounter{tocdepth}{1}
\usepackage{comment}
\usepackage{mathpazo}
\usepackage{euler}
\usepackage[noabbrev,capitalise]{cleveref}

\usepackage[margin=1.in]{geometry}
\tikzset{
  closed/.style = {decoration = {markings, mark = at position 0.5 with { \node[transform shape, xscale = .8, yscale=.4] {/}; } }, postaction = {decorate} },
  open/.style = {decoration = {markings, mark = at position 0.5 with { \node[transform shape, scale = .7] {$\circ$}; } }, postaction = {decorate} }
}

\newcommand{\mZ}{\mathbb{Z}}

\newcommand{\mQ}{\mathbb{Q}}

\newcommand{\mC}{\mathbb{C}}
\newcommand{\mG}{\mathbb{G}}

\newcommand{\cA}{\mathcal{A}}

\newcommand{\cO}{\mathcal{O}}

\newcommand{\cB}{\mathcal{B}}

\usepackage[OT2,T1]{fontenc}
\DeclareSymbolFont{cyrletters}{OT2}{wncyr}{m}{n}
\DeclareMathSymbol{\Sha}{\mathalpha}{cyrletters}{"58}

\DeclareMathSymbol{\Sha}{\mathalpha}{cyrletters}{"58}

\newcommand{\lra}{\longrightarrow}

\newcommand{\brk}[1]{ \left\lbrace #1 \right\rbrace }
\newcommand{\pwr}[1]{ \left( #1 \right) }


\newcommand{\uu}{\underline{u}}

\RequirePackage{mathrsfs} 

\numberwithin{equation}{subsection}
\newtheorem{thmx}{Theorem}

\numberwithin{equation}{subsection}
\newtheorem{theorem}[subsection]{Theorem}

\newtheorem{lemma}[subsection]{Lemma}

\theoremstyle{definition}
\newtheorem{definition}[subsection]{Definition}

\newtheorem{assumption}[subsection]{Assumption}

\theoremstyle{remark}

\newtheorem{remark}[subsection]{Remark}

\numberwithin{equation}{section} \numberwithin{figure}{section}

\DeclareMathOperator{\per}{per}

\DeclareMathOperator{\Lie}{Lie}
\DeclareMathOperator{\Hom}{Hom}

\newcommand{\cdef}[1]{\textsf{\textit{#1}}}

\renewcommand{\leq}{\leqslant}
\renewcommand{\le}{\leqslant}
\renewcommand{\geq}{\geqslant}
\renewcommand{\ge}{\geqslant}

\DeclareMathOperator{\Cz}{Cz}
\DeclareMathOperator{\Fo}{F}

\newcommand\nc{\newcommand}
\nc\mf\mathfrak
\nc\mc\mathcal
\nc\mb\mathbb
\nc\msf\mathsf

\makeatletter
\@namedef{subjclassname@1991}{\emph{2020} Mathematics Subject Classification}
\makeatother








\newcommand{\Z}{{\mathbb Z}}
\newcommand{\Q}{{\mathbb Q}}
\newcommand{\C}{{\mathbb C}}
\newcommand{\R}{{\mathbb R}}

\newcommand{\N}{{\mathbb N}}

\newcommand{\Kbar}{{\overline{K}}}

\newcommand{\cF}{{\mathcal F}}

\newcommand{\barM}{\overline{M}}

\makeatletter
\let\@wraptoccontribs\wraptoccontribs
\makeatother

\begin{document}

\title{On $p$-adic uniformization of abelian varieties with good reduction}

\author{Adrian Iovita}
\address{Adrian Iovita \\
	Concordia University \\
	Department of Mathematics and Statistics \\ 
	Montr\'eal, Qu\'ebec, Canada and 
Dipartimento di Matematica \\
Universita degli Studi di Padova \\
Padova, Italy
}
\email{adrian.iovita@concordia.ca}

\author{Jackson S. Morrow}
\address{Jackson S. Morrow \\
	Department of Mathematics \\
	University of California, Berkeley \\
	749 Evans Hall, Berkeley, CA 94720}
\email{jacksonmorrow@berkeley.edu}

\author{Alexandru Zaharescu}
\address{Alexandru Zaharescu\\
Department of Mathematics \\
University of Illinois Urbana-Champaign \\
1409 West Green Street, Urbana, IL 61801, USA and
''Simion Stoilow'' Institute of Mathematics of the Romanian Academy\\
P.O. Box 1-764, RO-014700 Bucharest, Romania.
}
\email{zaharesc@illinois.edu}

\subjclass
{11G10 (
14K20,  
11G25,  
14L05)}	

\keywords{$p$-adic Uniformization, Abelian varieties, Fontaine integration}
\date{\today}

\contrib[with an appendix by]{Yeuk Hay Joshua Lam and Alexander Petrov}

\begin{abstract}
Let $p$ be a rational prime, let $F$ denote a finite, unramified extension of $\Q_p$, $K$ the maximal unramified extension of $\Q_p$, $\Kbar$ some fixed algebraic closure of $K$, and $\C_p$ the completion of $\Kbar$. Let $G_F$ the absolute Galois group of $F$. 
Let $A$ be an abelian variety defined over $F$, with good reduction. 
Classically, the Fontaine integral was seen as a Hodge--Tate comparison morphism, i.e.~as a map $\varphi_{A} \otimes 1_{\C_p}\colon T_p(A)\otimes_{\Z_p}\C_p\to \Lie(A)(F)\otimes_F\C_p(1)$, and as such it is surjective and has a large kernel.

The present article starts with the observation that if we do not tensor $T_p(A)$ with $\C_p$, then the Fontaine integral is often injective. 
In particular, it is proved that if $T_p(A)^{G_K} = 0$, then $\varphi_A$ is injective.  
As an application,  we extend the Fontaine integral to a perfectoid like universal cover of $A$ and show that if $T_p(A)^{G_K} = 0$, then $A(\overline{K})$ has a type of $p$-adic uniformization, which resembles the classical complex uniformization.  
\end{abstract}
\maketitle
\vspace*{-.8cm}
\section{\bf Introduction}\label{sec:intro}
As a consequence of a series of articles starting with Tate's original \cite{tate:pdiv} and ending with the article of Scholze--Weinstein \cite{scholze_weinstein}, we now have a complete classification of the $p$-divisible groups over $\cO_{\C_p}$ in terms of their Hodge--Tate structures.
In this paper, we investigate the question of whether a similar classification exists for abelian varieties over a $p$-adic field from a different angle, closer to the classical uniformization of abelian varieties over $\mathbb{C}$. 

This work has two main inspiration sources:~\cite{fontaine_presque} and \cite{bloch_kato}. 
To start with, let us recall that in the Introduction, in fact in the Prologue of \cite{fontaine_presque}, J.-M. Fontaine presents the following picture. 
Let $K$ denote the completion of a number field and choose an algebraic closure $\Kbar$ of $K$. We denote the absolute Galois group of $K$ by $G_K:={\rm Gal}(\Kbar/K)$ and by $C$ the completion of $\Kbar$. Let $A$ denote an abelian variety over $K$ of dimension $g\ge 1$, and let ${\rm Lie}(A)$ denote the Lie algebra of $A$, seen as a vector group over $K$.

\subsubsection*{Let us first suppose $K$ is archimedean}
 In this case we have $K=\R$ or $K=\C$, $\Kbar=\C=C$, $G_K=\{1, \tau\}$, with $\tau$ the complex conjugation, or $G_K=\{1\}$. 
The exponential map is everywhere defined and defines the exact sequence:
\[
0\to H_1(A(\C), \Z) \to \Lie(A)(\C)\stackrel{\exp_A}{\to} A(\C)\to 0.
\]
From this, we obtain the complex uniformization of $A$, i.e. an isomorphism of complex Lie-groups: $$A(\C)\cong \Lie(A)(\C)/\Lambda\cong \C^g/\Lambda,$$ where
$\Lambda$ is the image of the lattice $H_1(A(\C), \Z)$.

\subsubsection*{Now suppose that $K$ is a finite extension of $\Q_p$ for $p>2$ a rational prime}
In this case, we will denote $\C_p:=C$.
Then the logarithm is everywhere defined and we have a natural commutative diagram with exact rows:
\[
\begin{tikzcd}[row sep = 1em]
0 \arrow{r} & A_{\rm tor}(\Kbar)\arrow{r}\arrow[equal]{d} &A(\Kbar) \arrow{r}{\log_A} \arrow[right hook->]{d}& {\rm Lie}(A)(\Kbar) \arrow{r} \arrow[right hook->]{d}&0\\
0 \arrow{r} & A_{\rm tor}(\C_p)\arrow{r} &A(\C_p) \arrow{r}{\log_A} & {\rm Lie}(A)(\C_p) \arrow{r} &0.
\end{tikzcd}
\]
The group of points $A(\Kbar)$ has a natural topology (see Subsection \ref{sec:topologies}) whose completion is $A(\C_p)$ and which induces the discrete topology on $A_{\rm tor}(\Kbar)$ and the natural, $p$-adic topology on ${\rm Lie}(A)(\Kbar)$.

Let $A_{\rm tor}(\Kbar)$ denote the torsion subgroup of $A(\Kbar)$. We have the following decomposition
\[
A_{\rm tor}(\Kbar)=A_{p{\text{-tors}}}(\Kbar)\oplus A_{p'{\text{-tors}}}(\Kbar)
\] 
into the $p$-power torsion $A_{p{\text{-tors}}}(\Kbar)$ and the prime to $p$-torsion $A_{p'{\text{-tors}}}(\Kbar)$. 
Fontaine has constructed a section $s\colon A(\C_p)\to A_{p'{\text{-tors}}}(\Kbar)$ of the natural inclusion $A_{p'{\text{-tors}}}(\Kbar)\subset A(\C_p)$ and we denote the kernel of $s$ by $A^{(p)}(\C_p)$ (see Section \ref{sec:uniformization} or \cite{fontaine_presque} for more details on the construction of $s$).  
As such, we have the decompositions 
\[
A(\C_p)=A^{(p)}(\C_p)\oplus A_{p'{\text{-tors}}}(\Kbar)
\]
and
\[
A(\Kbar)=A^{(p)}(\Kbar)\oplus A_{p'{\text{-tors}}}(\Kbar)
\] 
where we let $A^{(p)}(\Kbar):=A^{(p)}(\C_p)\cap A(\Kbar)$.

Fontaine states, as a remarkable fact, that we can recover $A^{(p)}(\Kbar)$, and so also $A^{(p)}(\C_p)$, as topological abelian groups with $G_K$-action, from the knowledge of $A_{p{\text{-tors}}}(\Kbar)$. Moreover, this implies that we can recover $A(\Kbar)$, respectively $A(\C_p)$ from the knowledge of $A_{\rm tor}(\Kbar)$ (cf.~\cite[Proposition 1.1]{fontaine_presque}).

\subsection{Main contributions}
The present article has as objective to show that, under certain circumstances, by changing the topology of $A^{(p)}(\Kbar)$, this group can be determined in a different way from $A_{p{\text{-tors}}}(\Kbar)$. 
First, we recall that in \cite{fontaine:differentials}, Fontaine constructed an integration map
\[
\varphi_A\colon T_p(A) \to \Lie(A)(K)\otimes_K\C_p(1),
\]
and when this map was tensored with $\C_p$, it realizes the Hodge--Tate comparison morphism. In particular, the map 
\[
\varphi_A \otimes 1_{\C_p}\colon T_p(A)\otimes_{\Z_p}\C_p\to \Lie(A)(K)\otimes_K\C_p(1),
\]
is surjective and it has a large kernel.
A starting point for this article is the observation that if we do not tensor $T_p(A)$ with $\C_p$, then the Fontaine integral is often injective.

To state the results precisely, we need to establish some notation. Let now $K$ denote the maximal unramified extension of $\Q_p$, let $A$ denote an abelian variety defined over some subfield $F\subset K$ such that $[F:\Q_p]<\infty$, with good reduction over 
$F$. 
Let $\cA$ denote the N\'eron model of $A$ over ${\rm Spec}(\cO_F)$.

We present a proof, whose sketch was supplied by Pierre Colmez, that if $T_p(A)^{G_K}=0$, then Fontaine's integration map $\varphi_A\colon T_p(A)\to {\rm Lie}(A)(F)\otimes_F\C_p(1)$ is injective. There is another proof of this fact in the Appendix by Yeuk Hay Joshua Lam and Alexander Petrov (independently).

In fact, we have more; to describe this, we need to briefly recall some definitions. 
We define the universal covering space of $A(\overline{K}) = \cA(\cO_{\Kbar})$ to be
\begin{align*}
\cB_\cA & :=\varprojlim\left(\cA(\cO_{\Kbar})\stackrel{[p]}{\longleftarrow}\cA(\cO_{\Kbar})\stackrel{[p]}{\longleftarrow}\cdots \stackrel{[p]}{\longleftarrow}\cA(\cO_{\Kbar})\cdots\right),
\end{align*}
and we call an element $\uu = (u_n)_{n\geq 0}$ of $\cB_{\cA}$ a path. 
It is clear that $\cB_\cA$ is a $G_K$-module which sits in the following exact sequence:
\[
0\to T_p(A)\cong T_p(\cA)\to \cB_\cA\stackrel{\alpha}{\to}\cA(\cO_{\Kbar})\to 0,
\]
with $\alpha\left((u_n)_{n\ge 0}  \right):=u_0$ for all $\underline{u}=(u_n)_{n\ge 0}\in \cB_\cA$. 
In \autoref{defn:Fontaineintegral}, we extend the classical Fontaine integral to a non-zero, $G_F$-equivariant map 
\[
\varphi_{\cA}\colon \cB_{\cA}\to \Lie(\cA)(\cO_F)\otimes_F\C_p(1),
\]
and with this definition, it is clear that if a path $\uu = (u_n)_{n\geq 0}$ is periodic (i.e., there exists some $k\geq 1$ such that $u_0 = u_k$), then $\varphi_{\cA}(\uu)(\omega)  = 0$. 
Our first result proves that the kernel of this extended Fontaine integral is precisely the periodic paths. 

\begin{thmx}\label{conj:periodic1}
Let $A$ be an abelian variety over $F$ with good reduction, and let $\cA$ denote its N\'eron model. 
Suppose that $T_p(A)^{G_K}=0$. Then, the kernel of the Fontaine integral $\varphi_\cA$ extended to $\cB_{\cA}$ is precisely the subgroup of periodic paths of $\cB_{\cA}$ (\autoref{defn:periodicpaths}). 
\end{thmx}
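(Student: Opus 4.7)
The plan is to prove the reverse inclusion $\ker(\varphi_\cA) \subseteq \cB_\cA^{\rm per}$, where $\cB_\cA^{\rm per}$ denotes the subgroup of periodic paths; the forward inclusion follows from the construction of $\varphi_\cA$. The proof will rely on Fontaine's decomposition $\cA(\cO_{\Kbar}) = A^{(p)}(\Kbar) \oplus A_{p'\text{-tors}}(\Kbar)$, the injectivity of $\varphi_A$ on $T_p(A)$ established in the preceding result under the hypothesis $T_p(A)^{G_K} = 0$, and the $\Z_p$-linearity of $\varphi_\cA$.

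The first step is to identify $\cB_\cA^{\rm per}$ with $A_{p'\text{-tors}}(\Kbar)$ via $\alpha$. Given $v \in A_{p'\text{-tors}}(\Kbar)$ of order $m$ coprime to $p$, pick $a \in \Z$ with $ap \equiv 1 \pmod{m}$; the unique periodic lift of $v$ is $u_n := a^n v$. This yields a canonical isomorphism $\alpha\colon \cB_\cA^{\rm per} \xrightarrow{\sim} A_{p'\text{-tors}}(\Kbar)$, and combined with Fontaine's decomposition gives the short exact sequence
\[
0 \to T_p(A) \to \cB_\cA/\cB_\cA^{\rm per} \to A^{(p)}(\Kbar) \to 0.
\]
So for $\uu \in \ker(\varphi_\cA)$, subtracting the canonical periodic lift of the prime-to-$p$ part of $u_0$ reduces the problem to showing that $\uu \in \ker(\varphi_\cA)$ with $\alpha(\uu) \in A^{(p)}(\Kbar)$ forces $\uu = 0$.

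The $p$-power torsion subcase is direct: if $u_0 = \alpha(\uu) \in A[p^N]$, then $[p^N]\uu$ has $0$th entry $p^N u_0 = 0$ and hence lies in $T_p(A)$, and $\Z_p$-linearity gives $\varphi_A([p^N]\uu) = p^N \varphi_\cA(\uu) = 0$. Colmez's injectivity forces $[p^N]\uu = 0$; reading off the entries (which are $p^{N-j}u_0$ for $0 \leq j \leq N$ and $u_{j-N}$ for $j > N$), simultaneous vanishing gives $u_j = 0$ for every $j$, so $\uu = 0$.

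The remaining case, where $w := \alpha(\uu) \in A^{(p)}(\Kbar)$ is not assumed torsion, is the main obstacle. The strategy is to show that $w$ must lie in $A_{p\text{-tors}}(\Kbar)$, thereby reducing to the previous subcase. This would proceed by exploiting the explicit construction of the extended Fontaine integral in \autoref{defn:Fontaineintegral}: the vanishing of $\varphi_\cA(\uu) \in \Lie(\cA) \otimes \C_p(1)$, combined with a careful analysis of how the construction depends on choices of $B_{dR}^+$-lifts $\tilde u_n$, should force the classical $p$-adic logarithm $\log_\cA(w) \in \Lie(\cA) \otimes \C_p$ to vanish, whence $w \in A_{\rm tors}(\Kbar) \cap A^{(p)}(\Kbar) = A_{p\text{-tors}}(\Kbar)$. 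The hard part is translating the Tate-twisted vanishing into untwisted vanishing; this reconciliation depends on the precise formula in \autoref{defn:Fontaineintegral} and its compatibility with the formal logarithm on $\cA$.
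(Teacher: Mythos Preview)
Your setup and the first two subcases are correct and match the paper: the identification $\cB_\cA^{\rm per}\cong A_{p'\text{-tors}}(\Kbar)$, the reduction to $\alpha(\uu)\in A^{(p)}(\Kbar)$, and the $p$-power torsion argument are exactly Cases~1 and~2 in the paper's proof.

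The genuine gap is the non-torsion case, where your proposal is only a hope rather than an argument. You suggest that $\varphi_\cA(\uu)=0$ should force $\log_\cA(u_0)=0$, but you do not carry this out, and it is unclear that it can be made to work. The Fontaine integral takes values in $\Lie(A)\otimes\C_p(1)$ while the $p$-adic logarithm takes values in $\Lie(A)\otimes\C_p$; there is no evident mechanism by which vanishing in the twisted target implies vanishing in the untwisted one. The formula in the definition of $\varphi_\cA$ (pulling back invariant differentials to $\Omega^1_{\cO/\cO_K}$) has no direct link to the formal-group logarithm of $u_0$, and your appeal to ``$B_{\rm dR}^+$-lifts'' is not substantiated.

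The paper's argument for this case is entirely different and uses Galois cohomology. First one shows a rigidity lemma: if $\varphi_\cA(\uu)=0$ and $u_0\in\cA(\cO_L)$ for some finite $L/F$, then $\uu$ is $G_L$-invariant. Indeed, for $\sigma\in G_L$ one has $\sigma(\uu)-\uu\in T_p(A)$ and $\varphi_\cA(\sigma(\uu)-\uu)=0$, so injectivity on $T_p(A)$ gives $\sigma(\uu)=\uu$. Then one tensors the sequence $0\to V_p(A)\to\cB_{\cA,L}\otimes\Q\to A(L)\otimes\Q\to 0$ with $\Q$ and takes $G_L$-cohomology. By Bloch--Kato (\cite[Example~3.11]{bloch_kato}) the Kummer map $\partial\colon A(L)\otimes\Q\to H^1(G_L,V_p(A))$ is injective, so $(\alpha\otimes 1)\bigl((\cB_\cA\otimes\Q)^{G_L}\bigr)=0$. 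Thus no $G_L$-invariant $\uu$ can have $u_0\otimes 1\neq 0$, contradicting the rigidity lemma. This is the missing idea: the obstruction to a non-torsion $u_0$ lies in $H^1_f$, not in the $p$-adic logarithm.
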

    
As an application of \autoref{conj:periodic1}, we show that if $A$ satisfies $T_p(A)^{G_K} = 0$, then $A^{(p)}(\Kbar)$ can be determined in a different way from $A_{p{\text{-tors}}}(\Kbar)$.

\begin{thmx}\label{thm:main2}
Let $K:=\Q^{\rm ur}_p$ denote the maximal unramified extension of $\Q_p$. 
Let $A$ denote an abelian variety defined over some subfield $F\subset K$ such that $[F:\Q_p]<\infty$, with good reduction over 
$F$. 
Let $\cA$ denote the N\'eron model of $A$ over ${\rm Spec}(\cO_F)$.

Suppose that $A$ satisfies $T_p(A)^{G_K} = 0$. Then, there exists a canonical, injective, continuous, $G_F$-equivariant map
\[
\iota_A\colon A^{(p)}(\Kbar)\hookrightarrow \left({\rm Lie}(A)(F)\otimes_F\C_p(1)\right)/\varphi_{\cA}(T_p(A)),
\]
where the topology on $A^{(p)}(\Kbar)$ which makes this embedding continuous is the $w$-topology (\autoref{defn:Apwithw}).

Moreover, an element $x\in \left(\Lie(\cA)(\cO_F)\otimes_{\cO_F}\C_p(1)\right)/\varphi_{\cA}(T_p(A))$ lies in the image of $\iota_A$ if and only if $x$ is crystalline (\autoref{defn:cyrstalline}). In particular, one can recover $A^{(p)}(\Kbar)$ from the triple
\[
\left(T_p(A), \Lie(A)(F)\otimes_{F}\C_p(1), \varphi_\cA\colon T_p(A)\hookrightarrow {\rm Lie}(A)(F)\otimes_{F}\C_p(1)\right). 
\] 
\end{thmx}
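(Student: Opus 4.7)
The plan is to construct $\iota_A$ from the extended Fontaine integral $\varphi_{\cA}\colon \cB_{\cA}\to \Lie(A)(F)\otimes_F\C_p(1)$ and the natural surjection $\alpha\colon \cB_{\cA}\twoheadrightarrow A(\Kbar)$, and then to extract injectivity from \autoref{conj:periodic1}. Given $x\in A(\Kbar)$, choose any path $\uu\in \cB_{\cA}$ lifting $x$; any two such paths differ by an element of $T_p(A)=\ker(\alpha)$, so the assignment
\[
\bar\varphi\colon A(\Kbar)\longrightarrow \bigl(\Lie(A)(F)\otimes_F\C_p(1)\bigr)/\varphi_{\cA}(T_p(A)),\qquad x\longmapsto \varphi_{\cA}(\uu)\bmod \varphi_{\cA}(T_p(A))
\]
is a well-defined $G_F$-equivariant group homomorphism. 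I define $\iota_A$ as the restriction of $\bar\varphi$ to $A^{(p)}(\Kbar)$.

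For injectivity, suppose $\iota_A(x)=0$ for some $x\in A^{(p)}(\Kbar)$; lift $x$ to $\uu\in\cB_{\cA}$ and choose $t\in T_p(A)$ with $\varphi_{\cA}(\uu-t)=0$. By \autoref{conj:periodic1} the path $\uu-t$ is periodic, so $u_0-t_0 = u_k - t_k$ for some $k\ge 1$. Since $t_0=0$, this gives $u_0 = u_k-t_k$, and then $[p^k]u_0 = [p^k]u_k - [p^k]t_k = u_0 - 0 = u_0$, so $(p^k-1)x=0$. Because $\gcd(p^k-1,p)=1$, this forces $x\in A^{(p)}(\Kbar)\cap A_{p'{\text{-tors}}}(\Kbar)=0$. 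The $G_F$-equivariance of $\iota_A$ is inherited from that of $\varphi_{\cA}$, and continuity with respect to the $w$-topology should be essentially built into \autoref{defn:Apwithw}, ultimately reducing to the continuity of the formal logarithm in a neighborhood of the identity of $\cA$.

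The deepest ingredient is the image characterization. The inclusion $\mathrm{im}(\iota_A)\subseteq\{\text{crystalline elements}\}$ should follow from the very construction of $\varphi_{\cA}$ via integration of invariant differentials along paths in $\cA(\cO_{\Kbar})$, which by design lands in crystalline periods. For the converse, my plan is to use the crystalline hypothesis on an element $x$ to produce a coherent sequence of approximate preimages in the formal-group neighborhoods $\widehat{\cA}(\mathfrak{m}_{\cO_{\Kbar}})$, and then to patch these into a genuine path $\uu\in\cB_{\cA}$ whose Fontaine integral recovers $x$ modulo $\varphi_{\cA}(T_p(A))$. The image $u_0\in A^{(p)}(\Kbar)$ is then the required preimage under $\iota_A$. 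Once the image is identified with the crystalline subgroup, the ``recovery'' clause is immediate, since both the target of $\iota_A$ and the notion of being crystalline depend only on the triple $(T_p(A),\,\Lie(A)(F)\otimes_F\C_p(1),\,\varphi_{\cA})$.

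The main obstacle is precisely this surjectivity onto crystalline elements: inverting the Fontaine integral to produce honest $p$-power divisions in $A(\Kbar)$ from abstract crystalline data requires combining \autoref{conj:periodic1} with a fine Hodge--Tate/crystalline comparison analysis, and is where I expect the argument to demand the most care.
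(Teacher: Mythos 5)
The construction of $\iota_A$ and the injectivity argument are essentially the same as the paper's (the paper phrases injectivity through a commutative diagram, while you extract it more directly from the periodicity of $\uu - t$, but the content is identical; your observation that $[p^k]u_0 = u_0$ forces $u_0$ into the prime-to-$p$ torsion is a clean way to put it). Continuity is likewise obtained in the paper from the earlier continuity result for $\varphi_\cA$ with respect to the $w$-topology (\autoref{lemma:Fontainecontinuousw}) and the quotient topology, so your sketch there is fine.

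The genuine gap is the image characterization, and you acknowledge as much — but the route you sketch is also not the one that works. First, the forward inclusion $\mathrm{im}(\iota_A)\subseteq\{\text{crystalline}\}$ is not automatic from ``integration lands in crystalline periods'': the notion in \autoref{defn:cyrstalline} is about the extension class of $\alpha^{-1}(x\Z_p)\otimes\Q_p$ as a $G_L$-representation, and one needs to identify this with a Kummer-type class and know that such classes lie in $H^1_f(L, V_p(A))$. Second, and more seriously, for the converse your plan of ``producing approximate preimages in $\widehat{\cA}(\mathfrak{m}_{\cO_{\Kbar}})$ and patching'' does not obviously invert the Fontaine integral, and is not the mechanism that succeeds. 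The paper handles torsion $x$ directly (showing every torsion element of the target is algebraic, hence crystalline by definition, and is hit by $\iota_A$ via $p$-power torsion of $A$), and handles non-torsion $x$ by passing to a finite extension $L$ over which $x$ is invariant, taking the long exact sequence in continuous $G_L$-cohomology of a diagram built from \eqref{eqn:SESrational}, and then invoking Bloch--Kato (\cite[Example 3.11 \& Proposition 5.4]{bloch_kato}) to say that the boundary map sends $A(L)\otimes_\Z\Q$ isomorphically onto $H^1_f(L, V_p(A))$, together with Tate's vanishing \cite[Theorem 2]{tate:pdiv} to identify $\bigl(\Lie(\cA)(\cO_F)\otimes_{\cO_F}\C_p(1)/(\varphi_\cA\otimes 1)(V_p(A))\bigr)^{G_L}$ with $H^1(L, V_p(A))$. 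The comparison of $\gamma(x)$ against $H^1_f$ is exactly the ``crystalline'' condition via \autoref{rem:crystalline}. In short: the missing ingredient is the Bloch--Kato exponential / $H^1_f$ mechanism, not a direct construction of preimages.

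One more small point worth flagging: the ``recovery'' clause is indeed immediate once the image is identified, as you say, but only because the definition of crystalline elements is phrased entirely in terms of the triple; this is worth stating explicitly since otherwise the sentence is circular.
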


\begin{remark}
The fact that the field $F$ of definition of the abelian variety $A$
in the above theorem, is unramified is only used in Section \ref{sec:classes} and in the Appendix. It seems clear that with more work all the results of this paper, suitably adjusted, should hold when $F$ is a finite extension of $\Q_p$.
\end{remark}

\begin{remark}
We also, strangely enough, are able to prove analogues of the above results, on the kernel of the Fontaine integral and the uniformization, for the multiplicative group (cf.~Section \ref{sec:uniformizationmultiplicativegroup}).
\end{remark}

\subsection{Related results}
Our \autoref{thm:main2} states that if $A$ satisfies $T_p(A)^{G_K} = 0$, then $A(\overline{K})$ has a type of $p$-adic uniformization, which resembles the classical complex uniformization. 
The history of $p$-adic uniformization of abelian varieties is rich and beautiful, and we briefly exposit it below. 

The first work in this area was due to Tate \cite{Tate:Curve} who showed that an elliptic curve with multiplicative reduction is uniformized, as a rigid analytic space, by the rigid analytification of the multiplicative group. 
Later, Raynaud \cite{Raynaud:Uniformization} and Bosch--L\"utkebohmert \cite{BLI, BLII} extented Tate's result to arbitrary abelian varieties and isolated a class of abelian varieties, namely those with toric reduction, whose topological uniformization resembles complex uniformization. Subsequent developments in $p$-adic geometry by Berkovich \cite{BerkovichSpectral, BerkovichEtaleCohomology} gave rise to $p$-adic analytic spaces which have topological properties similar to those of complex manifolds. 
Using this theory, Berkovich \cite{BerkovichUniversalCover} showed that a smooth, connected Berkovich analytic space has a topological universal cover, and this result was later generalized by Hrushovshi--Loeser \cite{hrushovski2016non} to quasi-projective Berkovich analytic spaces. 
While Berkovich spaces have nice topological properties, adic spaces (in the sense of Huber \cite{huber2}) do not (e.g., they do not possess paths). 
That being said, one can use the theory of perfectoid spaces \cite{ScholzePerfectoid} and diamonds \cite{scholze2020berkeley, ScholzeDiamonds} to construct certain pro-\'etale uniformizations of the adic space associated to an abelian variety (c.f.~\cite{ blakestadetal:perfectoid, heuer:proetale}). 
In addition to these results, we mention work of Fargues \cite{Fargues:Pdivisible} and Scholze--Weinstein \cite{scholze_weinstein} which provide $p$-adic uniformization results for $p$-divisible groups.  The shape of \cite[Theorem B]{scholze_weinstein} is similar to \autoref{thm:main2}, but \autoref{thm:main2} is closer to the classical complex uniformization. 

To conclude,  we believe that \autoref{thm:main2} is the first result which gives a $p$-adic uniformization of abelian varieties with good reduction which is similar to the classical complex uniformization.

\subsection{Organization of paper}
In Section \ref{sec:Felliptic}, we recall the construction of Fontaine integration and discuss the role this integration theory plays in the context of Hodge--Tate and de Rham comparison isomorphisms for abelian varieties. 
In Section \ref{sec:uniformizationmultiplicativegroup}, we discuss the analogues of \autoref{conj:periodic1} and \autoref{thm:main2}  to the multiplicative group as a warm up for the case of abelian varieties. 
In Section \ref{sec:conjecture}, we state \autoref{conj:periodic1} as well as how it relates to previous literature. 
In Section \ref{sec:evidence}, we show that  \autoref{conj:periodic1} can be reduced to the injectivity of the Fontaine integral restricted to the Tate module of the formal group and prove this, following a sketch provided by Pierre Colmez.   
In Section \ref{sec:uniformization}, we prove \autoref{thm:main2}, which is our $p$-adic uniformization result for $A^{(p)}(\Kbar)$.

\subsection{Conventions}\label{subsec:conventions}
We establish the following notations and conventions throughout the paper.

\subsubsection*{Fields}
Fix a rational prime $p>2$.  
Let $K:=\Q^{\rm ur}_p$ denote the maximal unramified extension of $\Q_p$, let $\overline{K}$ be a fixed algebraic closure of $K$, and let $\C_p$ denote the completion of $\overline{K}$ with respect to the unique extension $v$ of the $p$-adic valuation on $\Q_p$ (normalized such that $v(p) = 1$). 
For a tower of field extensions $\Q_p\subset F\subset K$, we denote by $G_K$ and respectively $G_F$ the absolute Galois groups of $K$ and $F$ respectively. 
We denote $\cO:=\cO_{\Kbar}$.
We remark that working over $K$, as we do in this article, is not essential. One could start by fixing a finite extension $L$ of $\Q_p$, and end-up working on $L^{\rm ur}$.

\subsubsection*{Abelian varieties}
We will consider an abelian variety $A$ defined over some  subfield $F\subset K$ such that $[F:\Q_p]<\infty$, with good reduction over 
$F$.
Let $\cA$ denote the N\'eron model of $A$ over ${\rm Spec}(\cO_F)$ and also denote by $\widehat{\cA}$ the formal completion of $\cA$ along the identity of its special fiber, i.e. the formal group of $A$.
We note that the formation of N\'eron models commutes with unramified base change. 
We will denote the Tate module of $A$ (resp.~the N\'eron model $\cA$ of $A$) by $T_p(A)$ (resp.~$T_p(\cA)$).
We note that $T_p(A)\cong T_p(\cA)$ as $G_F$-modules.

We recall that $\widehat{\cA}$ is a formal group of dimension $\dim (A)$ and of height $h$ which satisfies $\dim(A) \leq h \leq 2\dim(A)$.

\subsection*{Acknowledgements}
This article owes much to many people. 
We thank Robert Benedetto, Olivier Brinon, Henri Darmon, Eyal Goren, Ralph Greenberg, and Sean Howe for helpful conversations and email exchanges on topics related to this research. 
We are grateful to Pierre Colmez for sending us a sketch of the proof of \autoref{thm:Fontaineformalinjective} and to Yeuk Hay Joshua Lam and Alexander Petrov for providing us with the proof presented in the Appendix. 
We also thank Pierre Colmez and Jan Nekovar for pointing out some errors in earlier drafts of this paper.
We would like to thank the referees for their very helpful and detailed comments.
This research began at the thematic semester on  “Number Theory --- Cohomology in Arithmetic” at the Centre de Recherches Math\'ematiques. 
Finally, the first author was partially supported by an NSERC Discovery grant.

\section{\bf Fontaine integration for abelian varieties with good reduction}
\label{sec:Felliptic}
In this section, we recall the construction of the Fontaine integration and the extension of this integration theory to a certain universal cover of an abelian variety. 
We also describe several topological aspects of the integration theory and discuss how these integration theories realize the Hodge--Tate and de Rham comparison isomorphisms for abelian varieties.

\subsection{The differentials of the algebraic integers}
First, we recall for the reader's convenience the notation established above. 
Let $K:=\Q^{\rm ur}_p$ denote the maximal unramified extension of $\Q_p$, let $\overline{K}$ be a fixed algebraic closure of $K$, and let $\C_p$ denote the completion of $\overline{K}$. 
Let $G_K$ denote the absolute Galois group of $K$. 
We denote $\cO:=\cO_{\Kbar}$. Fix a finite extension $F$ of $\Q_p$ in $K$.
For a $G_K$-representation $V$, the $n$-th Tate twist of $V$ is denoted by $V(n)$, which is just the tensor product of $V$ with the $n$-fold product of the $p$-adic cylcotomic character $\mQ_p(1)$. 

In \cite{fontaine:differentials}, Fontaine studied a fundamental object related to these choices, namely the $\cO$-module $\Omega:=\Omega^1_{\cO/\cO_K}\cong \Omega^1_{\cO/\cO_F}$ of K\"ahler differentials of $\cO$ over $\cO_K$, or over $\cO_F$. 
The $\cO$-module $\Omega$ is a torsion and $p$-divisible $\cO$-module, with a semi-linear action of $G_F$. 
Let $d\colon\cO\to \Omega$ denote the canonical derivation, which is surjective.

Important examples of algebraic differentials arise as follows:~Let $(\varepsilon_n)$ denote a compatible sequence of primitive $p$th roots of unity in $\overline{K}$. 
Then 
\[
\frac{d\varepsilon_n}{\varepsilon_n} = d (\log \varepsilon_n) \in \Omega \quad \mbox{ and } \quad  p\left(\frac{d\varepsilon_{n+1}}{\varepsilon_{n+I}}\right)=\frac{d\varepsilon_n}{\varepsilon_n}. 
\]

Next, we recall a theorem of Fontaine.

\begin{theorem}[\protect{\cite[Th\'eor\`eme 1']{fontaine:differentials}}]\label{thm:Fontainedifferentials}
Let $(\varepsilon_n)$ denote a compatible sequence of primitive $p$th roots of unity in $\overline{K}$. 
For $\alpha \in K$, write $\alpha = a/p^r$ for some $a\in \mathcal{O}$. 
The morphism $\xi\colon \overline{K}(1) \to \Omega$ defined by 
\[
\xi(\alpha \otimes (\varepsilon_n)_n) = a \frac{d\varepsilon_r}{\varepsilon_r}
\]
is surjective and $G_K$-equivariant with kernel 
\[
\ker(\xi) = \underline{a}_{K} \coloneqq \brk{x\in \overline{K} : v(x) \geq - \frac{1}{p-1}}.
\]
Moreover, $\Omega\cong{\overline{K}(1)}/{\underline{a}_K(1)} \cong (\overline{K}/\underline{a}_K)(1)$ and $V_p(\Omega) = \Hom_{\mZ_p}(\mQ_p,\Omega) \cong \mC_p(1)$.
\end{theorem}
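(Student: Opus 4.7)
The plan is to verify the formal properties of $\xi$ directly, then identify the kernel by analyzing the valuation of differents in cyclotomic and general finite extensions of $K$, and finally deduce the quotient description and the Tate module computation.

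First I would check that $\xi$ is well-defined. The only ambiguity in writing $\alpha = a/p^r$ is the substitution $(a,r) \mapsto (pa, r+1)$, so it suffices to check invariance under this replacement. Logarithmic differentiation of the relation $\varepsilon_{r+1}^p = \varepsilon_r$ yields
\[
\frac{d\varepsilon_r}{\varepsilon_r} = p \cdot \frac{d\varepsilon_{r+1}}{\varepsilon_{r+1}},
\]
so the two candidate values for $\xi(\alpha \otimes (\varepsilon_n))$ coincide. The $G_K$-equivariance is a direct calculation: for $\sigma \in G_K$ with cyclotomic character $\chi(\sigma)$, both $\sigma \cdot (\alpha \otimes (\varepsilon_n))$ and $\sigma(a \cdot d\varepsilon_r/\varepsilon_r)$ pick up the identical factor $\chi(\sigma)$ corresponding to the Tate twist.

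For the kernel and surjectivity I would pass to finite levels. Writing $\cO = \varinjlim_L \cO_L$ over finite extensions $L/K$ inside $\overline{K}$, one has $\Omega = \varinjlim_L \Omega^1_{\cO_L/\cO_K}$, and the classical structure result identifies $\Omega^1_{\cO_L/\cO_K} \cong \cO_L/\mathfrak{d}_{L/K}$, where $\mathfrak{d}_{L/K}$ is the different. The heart of the argument is to control, under this identification, the behavior of $d\varepsilon_r/\varepsilon_r$: taking $\pi_L = \varepsilon_r - 1$ as a uniformizer in $L = K(\varepsilon_r)$, a direct check shows $d\varepsilon_r/\varepsilon_r$ is a unit multiple of $d\pi_L$, so its annihilator in $\Omega^1_{\cO_L/\cO_K}$ is precisely $\mathfrak{d}_{K(\varepsilon_r)/K}$. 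Using the classical formula for the different of the cyclotomic tower together with the transitivity $\mathfrak{d}_{L/K} = \mathfrak{d}_{L/K(\varepsilon_r)} \cdot \mathfrak{d}_{K(\varepsilon_r)/K}$ as $L$ ranges through larger finite extensions, one shows that $\xi(\alpha \otimes (\varepsilon_n)) = 0$ if and only if $v(\alpha) \geq -1/(p-1)$; the constant $-1/(p-1)$ emerges as the properly normalized asymptotic limit of $v(\mathfrak{d}_{K(\varepsilon_r)/K})$. The same valuation analysis gives surjectivity: for any $y \in \cO_L$ the element $dy$ is an $\cO_L$-multiple of $d\pi_L$, and hence of $d\varepsilon_r/\varepsilon_r$ for $r$ large enough.

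The quotient description $\Omega \cong \overline{K}(1)/\underline{a}_K(1) \cong (\overline{K}/\underline{a}_K)(1)$ is then immediate from the first isomorphism theorem. For the Tate module computation, I would use that $\Omega$ is $p$-power torsion (each $\Omega^1_{\cO_L/\cO_K}$ being killed by a power of $p$), so $V_p(\Omega) \cong \varprojlim_n \Omega[p^n] \otimes_{\mZ_p} \mQ_p$. Combining this with $\Omega(-1) \cong \overline{K}/\underline{a}_K$, one computes $\Omega[p^n](-1) \cong p^{-n}\underline{a}_K/\underline{a}_K$, and the inverse system under multiplication by $p$ naturally realizes the $p$-adic completion of $\overline{K}$, which is $\C_p$; the Tate twist yields the desired $\C_p(1)$. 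The main obstacle will be the kernel computation: tracking the precise valuations of $d\varepsilon_r/\varepsilon_r$ uniformly across the tower of all finite extensions of $K$, and in particular handling wildly ramified non-cyclotomic extensions through transitivity of the different, is the technical core of Fontaine's argument and is where the specific constant $-1/(p-1)$ arises.
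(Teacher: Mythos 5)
Since the paper simply cites Fontaine's Th\'eor\`eme 1' and supplies no proof of its own, there is nothing internal to compare against; your sketch follows Fontaine's actual approach in outline (pass to finite levels, identify $\Omega^1_{\cO_L/\cO_K}\cong\cO_L/\mathfrak{d}_{L/K}$, and control the differential of the cyclotomic generator via differents). The well-definedness and $G_K$-equivariance checks are fine, and the passage from the kernel description to $\Omega\cong(\overline{K}/\underline{a}_K)(1)$ and to $V_p(\Omega)\cong\C_p(1)$ is routine, modulo a small identification you should make explicit: $\Hom_{\Z_p}(\Q_p,\Omega)$ is by definition $\varprojlim\bigl(\Omega\stackrel{p}{\leftarrow}\Omega\stackrel{p}{\leftarrow}\cdots\bigr)$, which for a divisible torsion module agrees with $T_p(\Omega)\otimes_{\Z_p}\Q_p$, and this is the object your limit computation actually handles.

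The genuine gap is exactly where you flag it, but it is worth stating precisely why, because it affects both the kernel and the surjectivity claim and not only the former. Knowing the annihilator of $d\varepsilon_r/\varepsilon_r$ in $\Omega^1_{\cO_{K(\varepsilon_r)}/\cO_K}$ is \emph{not} the same as knowing it in $\Omega=\varinjlim_L\Omega^1_{\cO_L/\cO_K}$: the transition maps $\Omega^1_{\cO_{K(\varepsilon_r)}/\cO_K}\to\Omega^1_{\cO_L/\cO_K}$ have nontrivial kernel, so a priori $d\varepsilon_r/\varepsilon_r$ could become more killable as $L$ grows, and $\underline{a}_K$ could a priori be larger than the cyclotomic different predicts. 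Dually, your surjectivity argument asserts that for $y\in\cO_L$ the class of $dy$ is eventually an $\cO$-multiple of $d\varepsilon_r/\varepsilon_r$; but $d\pi_L$ and $d\varepsilon_r/\varepsilon_r$ live in $\Omega^1_{\cO_{L(\varepsilon_r)}/\cO_K}$ with a nontrivial cofactor between them, and one must show that cofactor becomes a unit (asymptotically) as $r\to\infty$. Both directions hinge on the same quantitative ramification input --- Fontaine's estimate that elements of $\cO$ are well-approximated by the cyclotomic tower, controlling $\mathfrak{d}_{L(\varepsilon_r)/K(\varepsilon_r)}$ uniformly in $L$ and $r$ --- and this is not a detail to be tracked but the theorem's actual content, without which the constant $-\tfrac{1}{p-1}$ would neither appear nor be sharp. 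Your outline correctly locates this step but does not supply it.
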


\autoref{thm:Fontainedifferentials} implies the following:
\[
T_p(\Omega)\otimes_{\Z_p}\Q_p:=\left(\varprojlim_n \Omega[p^n]\right) \otimes_{\Z_p}\Q_p\cong \left(\varprojlim\left(\Omega\stackrel{p}{\leftarrow}\Omega\stackrel{p}{\leftarrow}\cdots\stackrel{p}{\leftarrow}\Omega\cdots\right)\right)\otimes_{\Z_p}\Q_p\cong\C_p(1)
\]
as $G_F$-modules.

We denote by $\cO^{(1)}:=\ker(d)$, the kernel of $d$, which is an $\cO_K$-sub-algebra of $\cO$. 
Indeed, if $a,b\in \cO^{(1)}$, then $d(ab)=ad(b)+bd(a)=0$, and so $ab\in \cO^{(1)}$. 
We note that the derivation map $d\colon \cO \to \Omega$ is not continuous with respect to the $p$-adic topology on $\cO$ and the discrete topology on $\Omega$. 
Nevertheless, there is another topology on $\cO$ for which $d$ is continuous, namely the topology defined by the map $w\colon\cO\lra \Z\cup \{\infty\}$ defined by $w(a):=\sup\{n\in \mZ \mid a\in p^n\cO^{(1)}\}$. 
We summarize this result in the following lemma.

\begin{lemma}\label{lemma:continuousd}
Consider $\cO$ with the topology defined by $w$ and endow $\Omega$ with the discrete topology. 
Then, the derivation map $d\colon \cO \to \Omega$ is continuous, and moreover, this topology is the weakest topology for which $d$ is continuous. 
\end{lemma}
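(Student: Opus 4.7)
The proof breaks into two parts: verifying that $d$ is continuous with respect to the $w$-topology on $\cO$ and the discrete topology on $\Omega$, and then establishing minimality. For the first part, I would exploit that $d$ is an additive group homomorphism and $\Omega$ is discrete, so continuity reduces to showing that the kernel $\cO^{(1)} = d^{-1}(\{0\})$ is $w$-open. By the very definition of $w$, we have $\cO^{(1)} = \{a \in \cO : w(a) \ge 0\} = p^0 \cO^{(1)}$, which is a basic open neighborhood of $0$ in the $w$-topology. Continuity of $d$ at the origin, and hence everywhere by the group-homomorphism property, follows immediately.

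For the minimality claim, let $\tau$ be any topology on $\cO$ compatible with the natural $\Z_p$-module structure and making $d$ continuous (with $\Omega$ discrete). Then $\cO^{(1)} = d^{-1}(\{0\})$ is $\tau$-open, and multiplication by $p$ is a $\tau$-continuous endomorphism of $\cO$. Under the natural compatibility hypothesis, namely that $\cO \to p\cO$, $a \mapsto pa$, is a $\tau$-homeomorphism onto its image, the image $p\cO^{(1)}$ of the $\tau$-open set $\cO^{(1)}$ under multiplication by $p$ is itself $\tau$-open. Iterating this argument gives that $p^n \cO^{(1)}$ is $\tau$-open for every $n \ge 0$; since these subgroups form a fundamental system of neighborhoods of $0$ for the $w$-topology, $\tau$ must be finer than the $w$-topology, establishing minimality.

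The principal obstacle I expect is making rigorous why multiplication by $p$ should be an \emph{open} map (not merely continuous) under any $\tau$ satisfying the hypotheses. Without this, the literal coarsest topology making only $d$ continuous is the initial topology for $d$, whose opens are unions of cosets of $\cO^{(1)}$ and which is strictly coarser than the $w$-topology (since, for instance, $p\cO^{(1)} \subsetneq \cO^{(1)}$ is not a union of $\cO^{(1)}$-cosets). The correct interpretation of the lemma therefore restricts to topologies respecting the natural $\cO_F$-module structure on $\cO$; with this compatibility in force, multiplication by $p$ is automatically a homeomorphism onto its image and the argument above identifies the $w$-topology as the coarsest admissible one.
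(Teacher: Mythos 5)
The paper gives no proof of this lemma, so there is nothing to compare against; the evaluation below is on the merits alone.

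Your proof of continuity is correct and essentially forced: a fundamental system of neighborhoods of $0$ in the $w$-topology is $\{p^n\cO^{(1)}\}_{n\ge 0}$, all of which lie inside $\cO^{(1)}=\ker d$, so $d^{-1}(\{0\})$ is open and translation-invariance does the rest. You are also right to flag the ``weakest'' claim: the initial topology for $d$ (whose opens are unions of $\cO^{(1)}$-cosets) is strictly coarser than the $w$-topology, since $p\cO^{(1)}$ is a proper subgroup of $\cO^{(1)}$ and hence not a union of $\cO^{(1)}$-cosets. So some implicit compatibility hypothesis must be in force for the second assertion to be true, and your instinct to phrase this in terms of the module structure is reasonable.

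However, the fix as you state it has two gaps. First, ``multiplication by $p$ is automatically a homeomorphism onto its image'' does not follow merely from $\tau$ making $\cO$ a topological $\cO_F$- or $\Z_p$-module: continuity of the scalar action gives that $a\mapsto pa$ is $\tau$-continuous, but nothing forces the inverse map $p\cO\to\cO$ to be continuous for the subspace topology on $p\cO$. This is a genuine extra condition, not an automatic consequence. Second, and more seriously, even granting that $a\mapsto pa$ is a homeomorphism onto its image, you only obtain that $p\cO^{(1)}$ is open in $p\cO$ for the subspace topology; to conclude that $p\cO^{(1)}$ is $\tau$-open in $\cO$ you also need $p\cO$ itself to be $\tau$-open in $\cO$ (equivalently, that $\tau$ refines the $p$-adic topology), and your argument does not supply that. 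Any honest statement of minimality should therefore make explicit at least these two hypotheses --- that $\tau$ refines the $p$-adic topology and that multiplication by $p$ is a homeomorphism onto its image --- after which the iteration argument you sketch does establish that $\tau$ is finer than the $w$-topology. As written, the proposal substitutes ``automatic'' for what are in fact additional assumptions.
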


\begin{remark}
The topology defined by $w$ on $\cO$ is finer than the topology defined by $p$-adic valuation $v_p$; in particular, anything discrete for $v_p$ is also discrete for $w$ (c.f.~\cite[Proposition 1.4.3]{Fontaine:PadicRep}).  
Also, we note that by \cite[Theorem 1]{colmez} the completion of $\overline{K}$ for $w$ is $B_{\text{dR}}^+/\text{Fil}^2$. 
\end{remark}

\subsection{A universal cover of an abelian variety}
Classically, the Fontaine integral is defined as a map
\[
\varphi_A\colon T_p(A) \to \Lie(A)(F)\otimes_F\C_p(1).
\]
For our purposes, we will want to integrate along a larger collection of paths, which leads to the following definition. 
We note that this construction appears in \cite[Part II, Section 3]{coleman_iovita}. 

\begin{definition}
\label{defn:universalcoveringspace}
We define the \cdef{universal covering space of $A(\overline{K})$} to be
\begin{align*}
\cB_\cA & :=\varprojlim\left(\cA(\cO)\stackrel{[p]}{\longleftarrow}\cA(\cO)\stackrel{[p]}{\longleftarrow}\cdots \stackrel{[p]}{\longleftarrow}\cA(\cO)\cdots\right) = \varprojlim\left(A(\overline{K})\stackrel{[p]}{\longleftarrow}A(\overline{K})\stackrel{[p]}{\longleftarrow}\cdots \stackrel{[p]}{\longleftarrow}A(\overline{K})\cdots\right)
\end{align*}
where the equality comes from the properness of $\cA$. We call an element $\uu = (u_n)_{n\geq 0}$ of $\cB_{\cA}$ \cdef{a path}. 
\end{definition}

We immediately have that $\cB_\cA$ is a $G_F$-module which sits in the following exact sequence:
\begin{equation}\label{eqn:SESpaths}
0\to T_p(A)=T_p(\cA)\to \cB_\cA\stackrel{\alpha}{\to}\cA(\cO)\to 0,
\end{equation}
with $\alpha\left( \uu \right):=u_0$ for all $\underline{u}=(u_n)_{n\ge 0}\in \cB_A$. 

\begin{remark}
The universal covering space from \autoref{defn:universalcoveringspace} is similar to the construction of the perfectoid universal cover of an abelian variety (see e.g., \cite[Lemma 4.11]{pilloni2016cohomologie} and \cite{blakestadetal:perfectoid}). 
We refrain from using this terminology as we work over the field $\overline{K}$, which is not perfectoid, and hence $\cB_{\cA}$ cannot come from any perfectoid space. 
We also mention that \cite[Appendice B]{Colmez:Integration} discusses universal covering spaces of abelian varieties over $p$-adic fields. 
\end{remark}

\subsection{The definition of Fontaine integration}\label{sec:Fontaineintegral}
We are now ready to define Fontaine integration. 
Let $ H^0(\cA, \Omega^1_{\cA/\cO_F})$ and respectively $\Lie(\cA)(\cO_F)$ denote the $\cO_F$-modules of invariant differentials on $\cA$ and respectively its Lie algebra. 
Note that $\omega\in  H^0(\cA, \Omega^1_{\cA/\cO_F})$ being invariant implies that $(x\oplus_{
\cA} y)^*(\omega) = x^*(\omega) + y^*(\omega)$ and $[p]^*(\omega) = p\omega$ where $\oplus_{
\cA}$ is the group law in $A(\Kbar)$. 

\begin{definition}\label{defn:Fontaineintegral}
Let $\underline{u}=(u_n)_{n\in \N}\in \cB_\cA$ and $\omega\in H^0(\cA, \Omega^1_{\cA/\cO_F})$.
Each $u_n\in \cA(\cO)$ corresponds to a morphism $u_n\colon {\rm Spec}(\cO)\to \cA$, and hence we can pullback $\omega$ along this map giving us a K\"ahler differential $u_n^*(\omega)\in \Omega$. 
The sequence $\left(u_n^*(\omega)\right)_{n\geq 0}$ is a sequence of differentials in $\Omega$ satisfying
$pu_{n+1}^\ast(\omega)=u_n^\ast(\omega)$, and hence defines an element in $V_p(\Omega)\cong \C_p(1)$.

The \cdef{Fontaine integration map} 
\[
\varphi_\cA\colon \cB_\cA\to \Lie(\cA)(\cO_F)\otimes_{\cO_F}\C_p(1)
\] 
is a non-zero, $G_F$-equivariant map defined by
\[
\varphi_\cA(\underline{u})(\omega):=\left(u_n^\ast(\omega)\right)_{n\ge 0}\in V_p(\Omega)\cong \C_p(1).
\]
\end{definition}

\subsection{Topological aspects of the extended Fontaine integral}\label{sec:topologies}
Recall the notation from Subsection \ref{subsec:conventions}. We have two $G_F$-modules, namely $A(\Kbar)=\cA(\cO)$ and $\cB_\cA$, which was defined in \autoref{defn:universalcoveringspace}.
A priori, $A(\Kbar)=\cA(\cO)$ and $\cB_\cA$ are just abelian groups, but we can endow them with natural topologies to enhance them to topological abelian groups as follows.

Fontaine defines the following natural topology on $A(\Kbar)$,  namely the topology for which a basis of neighborhoods of $x \in \cA(\cO)$ is of the form $x\oplus_{\cA} \widehat{\cA}(p^n\cO)$ for $n\geq 1$ where $\oplus_{
\cA}$ is the group law in $A(\Kbar)$. 
We denote $\cA(\cO)$ with this topology by $\cA^{\text{Fo}}(\cO)$, which makes $A(\Kbar) = \cA(\cO)$ into a topological abelian group and induces the discrete topology on $A_{\rm tor}(\Kbar)$, the subgroup of torsion points of $A(\Kbar)$ and the $p$-adic topology on the points of $\widehat{\cA}$. 
Let $\psi_{\cA}\colon \cA(\cO) \to \Lie(\cA)(\cO_F) \otimes_{\cO_F} \Omega$ denote the Fontaine integral i.e., defined by sending $a\in \cA(\cO)$ to $\psi_{\cA}(a)(\omega) \coloneqq a^*(\omega) \in \Omega$ where $\omega \in H^0(\cA,\Omega_{\cA/\cO_F}^1)$. 
As the derivative map $d$ is not continuous with respect to the natural $p$-adic topology on $\cO$, we see that the map $\psi_{\cA}$ is not continuous for $\cA^{\text{Fo}}(\cO)$ and the discrete topology on $\Lie(\cA)(\cO_F) \otimes_{\cO_F} \Omega$.  

In order to get continuity of the Fontaine integral $\psi_{\cA}$ and the extended Fontaine integral $\varphi_{\cA}$, we will need to define a new topology on which resembles the $w$-topology from \autoref{lemma:continuousd}. 

\begin{definition}
The \cdef{$w$-topology} on $\cA(\cO) = A(\Kbar)$ is defined to be the topology for which a basis of neighborhoods of $x \in \cA(\cO)$ is of the form $x\oplus_{\cA} \widehat{\cA}(p^n\cO^{(1)})$ for $n\geq 1$ where $\oplus_{
\cA}$ is the group law in $A(\Kbar)$. 
We denoted $\cA(\cO)$ with the $w$-topology by $\cA^w(\cO)$. 
\end{definition}

\begin{lemma}\label{lemma:winducesdiscretetorsion}
\begin{enumerate}
\item []
\item The map ${\rm Id}\colon \cA^w(\cO)\to \cA(\cO)$ is continuous, where on the target of the map we have the usual $p$-adic topology, i.e. the linear topology in which a basis of neighbourhoods of $0$ is $\bigl( \widehat{\cA}(p^n\cO)\bigr)_{n \geq 1}$. 
\item The $w$-topology on $\cA(\cO)$ induces the discrete topology on $\cA(\cO)_{\rm tor}$.
\end{enumerate}
\end{lemma}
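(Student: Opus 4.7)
The plan is as follows. For part (1), the argument is essentially formal: since $\cO^{(1)} \subseteq \cO$, we have $p^n \cO^{(1)} \subseteq p^n \cO$ for every $n \geq 1$, and hence $\widehat{\cA}(p^n \cO^{(1)}) \subseteq \widehat{\cA}(p^n \cO)$. Thus every basic $p$-adic neighborhood $x \oplus_\cA \widehat{\cA}(p^n \cO)$ of a point $x \in \cA(\cO)$ contains the basic $w$-neighborhood $x \oplus_\cA \widehat{\cA}(p^n \cO^{(1)})$, which proves that the identity map $\cA^w(\cO) \to \cA(\cO)$ is continuous.

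For part (2), the $w$-topology is translation-invariant by construction, so it suffices to exhibit a single basic $w$-neighborhood of $0$ that contains no nonzero torsion point of $\cA(\cO)$. I claim that $U := \widehat{\cA}(p \cO^{(1)})$ works, and the argument proceeds in two steps. First, any torsion $T \in U$ is necessarily $p$-power torsion: since $U \subseteq \widehat{\cA}(\cO)$ is contained in the kernel of the reduction map $\cA(\cO) \to \cA_s(\overline{\mathbb{F}}_p)$ associated to the N\'eron model, and since $\cA$ has good reduction (so that reduction is injective on prime-to-$p$ torsion), the prime-to-$p$ component of $T$ must vanish.

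Second, to exclude nontrivial $p$-power torsion, I would use the inclusion $U \subseteq \widehat{\cA}(p \cO)$ and invoke the formal logarithm $\log_{\widehat{\cA}}$. Under the standing assumption $p > 2$ one has $1/(p-1) < 1$, so $\log_{\widehat{\cA}}$ converges on $\widehat{\cA}(p\cO)$ and defines an injective $G_F$-equivariant group homomorphism into the torsion-free $\cO$-module $\Lie(\cA)(\cO)$ (in fact an isomorphism onto $p \cdot \Lie(\cA)(\cO)$). Consequently $\widehat{\cA}(p\cO)$ is torsion-free, and so is $U$, which completes the argument.

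No step of this plan is expected to present a serious obstacle; the one point that calls for a bit of care is the standard convergence and injectivity of the formal logarithm on $\widehat{\cA}(p\cO)$, which is available precisely because of the assumption $p>2$ made in the paper's conventions.
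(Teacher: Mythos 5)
Your proof is correct, but it diverges from the paper's in an instructive way, chiefly in part (2).

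For part (1) the two arguments are essentially the same in substance: you observe $\widehat{\cA}(p^n\cO^{(1)}) \subseteq \widehat{\cA}(p^n\cO)$ and conclude continuity of the identity from the fact that the $w$-topology is finer. The paper does the same thing but spells out the intermediate step: to show each $\widehat{\cA}(p^n\cO)$ is $w$-open, it decomposes it as a union of cosets of the open subgroup $\widehat{\cA}(p^n\cO^{(1)})$. You silently invoke the standard fact that a subgroup of a linear topological group containing an open subgroup is itself open; the paper makes that step explicit. Either level of detail is acceptable, and your phrasing is arguably the cleaner one.

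For part (2) the paper takes a much shorter path: it simply \emph{cites} the classical fact that the $p$-adic (Fontaine) topology on $\cA(\cO)$ already induces the discrete topology on $\cA(\cO)_{\rm tor}$, and then uses part (1) to conclude that the finer $w$-topology must do so as well. You instead reprove this classical fact from scratch by exhibiting the explicit basic $w$-neighborhood $U = \widehat{\cA}(p\cO^{(1)})$ of $0$ and showing it is torsion-free: prime-to-$p$ torsion is excluded because $U$ lies in the kernel of reduction and good reduction makes that kernel prime-to-$p$-torsion-free; $p$-power torsion is excluded because $U \subseteq \widehat{\cA}(p\cO)$ and, since $p>2$, the formal logarithm and exponential are mutually inverse on $\widehat{\cA}(p\cO)$ and $p\cdot\Lie(\cA)(\cO)$, so $\widehat{\cA}(p\cO)$ embeds in a torsion-free module. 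This is all correct and has the virtue of being self-contained, but it is doing genuinely more work than the statement requires: the paper buys part (2) for free from part (1) and a one-line citation, whereas you are effectively reproving the $p$-adic version of the claim as a lemma. Worth knowing that the quick route exists, but there is nothing wrong with yours.
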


\begin{proof}
For (1), it is enough to see that for every $n\ge 0$ the subset $\widehat{\cA}(p^n\cO)\subset \cA(\cO)$ is open in the $w$-topology.
Let $\cF\subset \widehat{\cA}(p^n\cO)$ be a system of representatives of the quotient $\widehat{\cA}(p^n\cO)/\widehat{\cA}(p^n\cO^{(1)})$. Then $\widehat{\cA}(p^n\cO)=\cup_{x\in \cF}\bigl(x\oplus \widehat{\cA}(p^n\cO^{(1)})\bigr)$ and therefore it is open in $\cA^w(\cO)$. 
For (2), we note that as the $p$-adic topology on $\cA(\cO)$ induces the discrete topology on $\cA(\cO)_{\rm tor}$, part (1) implies that every torsion point of $\cA(\cO)$ is open in the $w$-topology of $\cA(\cO)$, therefore $\cA(\cO)_{\rm tor}$ is discrete in $\cA^w(\cO)$.  
\end{proof}

To conclude this discussion, we will show that the action of $G_F$ on $\cA^w(\cO)$ and the Fontaine integral $\psi_{\cA}$ defined on $\cA^w(\cO)$ are continuous. 

\begin{lemma}\label{lemma:GKcontinuousw}
The action of $G_F$ on $\cA^w(\cO)$ is continuous.
\end{lemma}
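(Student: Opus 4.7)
The plan is to reduce continuity of the action to two key ingredients: the $G_F$-stability of the basic open subgroups defining the $w$-topology, and the fact that every point of $\cA(\cO)$ is defined over a finite extension of $F$. First I would verify that $\widehat{\cA}(p^n \cO^{(1)})$ is a $G_F$-stable subgroup of $\cA(\cO)$ for every $n \geq 1$. Since the canonical derivation $d \colon \cO \to \Omega$ is $G_F$-equivariant (the semi-linear $G_F$-action on $\Omega$ being defined precisely so that this holds), its kernel $\cO^{(1)}$ is $G_F$-stable; multiplying by $p^n \in \cO_F$ preserves this, and since the formal group law of $\widehat{\cA}$ is given by power series with coefficients in $\cO_F$ (which are fixed by $G_F$), evaluating on the $G_F$-stable ideal $p^n \cO^{(1)}$ yields a $G_F$-stable subgroup.

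Next I would use properness of $\cA$ over $\cO_F$ (via the valuative criterion) to identify $\cA(\cO) = \cA(\Kbar)$. Any point $x_0 \in \cA(\Kbar)$ is then already defined over some finite extension $F'/F$ inside $\Kbar$, so the open subgroup $G_{F'} \subseteq G_F$ fixes $x_0$ pointwise.

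To establish joint continuity of the action map $G_F \times \cA^w(\cO) \to \cA^w(\cO)$, I would fix $(\sigma_0, x_0)$ and a basic open neighborhood $W = \sigma_0(x_0) \oplus_{\cA} \widehat{\cA}(p^n \cO^{(1)})$ of $\sigma_0(x_0)$. Set $U := \sigma_0 G_{F'}$ and $V := x_0 \oplus_{\cA} \widehat{\cA}(p^n \cO^{(1)})$. For $\sigma = \sigma_0 \tau \in U$ with $\tau \in G_{F'}$, and $y = x_0 \oplus_{\cA} \epsilon \in V$ with $\epsilon \in \widehat{\cA}(p^n \cO^{(1)})$, $G_F$-equivariance of the formal group law gives
\[
\sigma(y) \;=\; \sigma_0 \tau(x_0) \oplus_{\cA} \sigma_0 \tau(\epsilon) \;=\; \sigma_0(x_0) \oplus_{\cA} \sigma_0 \tau(\epsilon),
\]
and $\sigma_0 \tau(\epsilon) \in \widehat{\cA}(p^n \cO^{(1)})$ by the first step. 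Hence $U \cdot V \subseteq W$, which proves joint continuity.

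The main subtlety is the $G_F$-stability of $\cO^{(1)}$ itself; once that is in hand, everything else is a routine matter of unwinding definitions and exploiting the fact that points of $\cA$ over $\Kbar$ lie in finite extensions of $F$.
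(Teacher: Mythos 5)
Your proof is correct and takes essentially the same approach as the paper: both arguments rest on the $G_F$-stability of the basic open subgroups $\widehat{\cA}(p^n\cO^{(1)})$ and on the fact that every point of $\cA(\cO)=\cA(\Kbar)$ lies over a finite extension of $F$, so that the relevant coset $\{\tau : \tau(y)=\sigma(y)\}$ (in your notation, $\sigma_0 G_{F'}$) is open in $G_F$. You spell out in more detail why $\cO^{(1)}=\ker d$ is $G_F$-stable, which the paper leaves implicit, but the structure of the argument is the same.
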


\begin{proof}
We observe that for every $\sigma\in G_F$ we have $\sigma(\widehat{\cA}(p^n\cO^{(1)})=\widehat{\cA}(p^n\cO^{(1)})$, which implies that $\sigma\colon\cA(\cO)\to \cA(\cO)$ is continuous for the $w$-topology on $\cA(\cO)$. 
 
 We denote by $s\colon G_F\times {\cA}(\cO)\to \cA(\cO)$ the action of $G_F$ on $\cA(\cO)$ i.e. , $s(\sigma, y)=\sigma(y)$, for $\sigma\in G_F$ and $ y\in \cA(\cO)$.
  Let now $x\in \cA(\cO)$ and $n\ge 0$.
  Suppose $(\sigma, y)\in s^{-1}(x\oplus \widehat{\cA}(p^n\cO^{(1)}))$ and let $U_{(\sigma, y)}:=\{\tau\in G_F\, |\, \tau(y)=\sigma(y)  \}$, then
  $U_{(\sigma, y)}$ is open in $G_F$ and $(\sigma, y)\in U_{(\sigma, \tau)}\times (y\oplus \widehat{\cA}(p^n\cO^{(1)}))\subset s^{-1}(x\oplus \widehat{\cA}(p^n\cO^{(1)}))$.  
  This implies $s$ is continuous. 
  \end{proof}

\begin{lemma}\label{lemma:Fontainecontinuousw}
The Fontaine integral $\psi_{\cA}\colon \cA^w(\cO)\times H^0(\cA, \Omega^1_{\cA/\cO_F})\to \Omega$ defined by
$(x, \omega) \to x^*(\omega)\in \Omega$ is continuous.
\end{lemma}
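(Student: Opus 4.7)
My plan is as follows. Since the target $\Omega$ carries the discrete topology, joint continuity at a point $(x,\omega)$ amounts to exhibiting a product neighborhood $U \times V$ of $(x,\omega)$ on which $\psi_{\cA}$ is constantly equal to $x^{*}(\omega)$. For $U$ I would take the basic $w$-open $x \oplus_{\cA} \widehat{\cA}(p\cO^{(1)})$, and for $V$ a suitable $p$-adic neighborhood $\omega + p^{N} H^0(\cA,\Omega^{1}_{\cA/\cO_F})$ inside the finite free $\cO_F$-module $H^{0}(\cA,\Omega^{1}_{\cA/\cO_F})$, with $N$ to be determined by $x$.

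The first step is to exploit the translation invariance of $\omega$ and of any test differential $\eta$: for $z \in \widehat{\cA}(p\cO^{(1)})$ and $\eta \in H^{0}(\cA,\Omega^{1}_{\cA/\cO_F})$, the invariance yields
\[
(x \oplus_{\cA} z)^{*}(\omega + p^{N}\eta) \;=\; x^{*}(\omega) + z^{*}(\omega) + p^{N} x^{*}(\eta) + p^{N} z^{*}(\eta) \quad \text{in } \Omega.
\]
Hence the lemma reduces to the three vanishings $z^{*}(\omega) = 0$, $z^{*}(\eta) = 0$, and $p^{N}\,x^{*}(\eta) = 0$ in $\Omega$ for all $\eta$.

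The core computation -- and really the whole point of introducing the $w$-topology -- is the vanishing $z^{*}(\omega) = 0$. Choosing formal coordinates $T_{1},\ldots,T_{g}$ at the identity of $\widehat{\cA}$, one can expand $\omega = \sum_{i} g_{i}(T)\,dT_{i}$ with $g_{i}\in\cO_F[[T_{1},\ldots,T_{g}]]$; if $z$ has formal coordinates $(t_{1},\ldots,t_{g}) \in (p\cO^{(1)})^{g}$, then $z^{*}(\omega) = \sum_{i} g_{i}(t)\,dt_{i}$ in $\Omega$. Writing $t_{i} = p\,u_{i}$ with $u_{i}\in \cO^{(1)} = \ker(d)$ and using the Leibniz rule together with the $\cO_F$-linearity of $d$ (so $dp = 0$), we get
\[
dt_{i} \;=\; d(pu_{i}) \;=\; p\,du_{i} + u_{i}\,dp \;=\; 0,
\]
whence $z^{*}(\omega) = 0$, and the identical argument gives $z^{*}(\eta) = 0$.

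Finally, to control the dependence on $\omega$, I would observe that by \autoref{thm:Fontainedifferentials} the module $\Omega \cong (\Kbar/\underline{a}_{K})(1)$ is $p$-power torsion (every element is annihilated by some power of $p$, since multiplication by a large power of $p$ moves any element of $\Kbar$ into $\underline{a}_{K}$). Because $H^{0}(\cA,\Omega^{1}_{\cA/\cO_F})$ is a finitely generated $\cO_F$-module, the image of the $\cO_F$-linear pullback map $\eta \mapsto x^{*}(\eta)$ at our fixed $x$ is a finitely generated $\cO_F$-submodule of $\Omega$, hence annihilated by some $p^{N}$ depending only on $x$. This choice of $N$ forces $p^{N}\,x^{*}(\eta) = 0$ uniformly in $\eta$, completing the argument. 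I do not anticipate a real obstacle here: the definition of $\cO^{(1)}$ is tailor-made so that $d$ kills $p\cO^{(1)}$, and this one identity drives the proof.
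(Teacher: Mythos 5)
Your proof is correct and follows essentially the same route as the paper's: both pick the product neighborhood $(x\oplus_{\cA}\widehat{\cA}(p^{m}\cO^{(1)}))\times(\omega+p^{N}H^{0}(\cA,\Omega^{1}_{\cA/\cO_F}))$, both use invariance to split the pullback into a sum, both kill the $z^{*}$-terms via the coordinate computation $dz_i=0$ for $z_i\in p^{m}\cO^{(1)}\subset\ker(d)$, and both kill $p^{N}x^{*}(\eta)$ using that $\Omega$ is $p$-power torsion and $H^{0}(\cA,\Omega^{1}_{\cA/\cO_F})$ is a finitely generated $\cO_F$-module. The only cosmetic difference is that the paper groups $\gamma=\omega+p^{n}\beta$ as a single invariant form before pulling back, while you expand all four terms at once.
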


\begin{proof}
We recall that on $\cA^w(\cO)$ we have the $w$-topology, on $H^0\bigl(\cA, \Omega^1_{\cA/\cO_F}  \bigr)$ we have the $p$-adic topology, and on $\Omega$ we have the discrete topology.

Let $x\in \cA(\cO)$ and $\omega\in H^0\bigl(\cA, \Omega^1_{\cA/\cO_F}  \bigr)$ and denote by $\eta\coloneqq x^*(\omega)\in \Omega$.
Let $U\coloneqq \psi_{\cA}^{-1}(\eta)\subset \cA^w(\cO)\times H^0\bigl(\cA, \Omega^1_{\cA/\cO_F}  \bigr)$. 
We claim 
$U$ is a neighborhood of $(x, \omega)$. 
For this let $V_x:=x\oplus_{\cA}\widehat{\cA}\bigl(p^m\cO^{(1)}\bigr)$ for some $m\ge 1$
be a neighbourhood of $x$. Let $\omega_1,\dots, \omega_g$ denote an $\cO_F$-basis of $H^0\bigl(\cA, \Omega^1_{\cA/\cO_F}  \bigr)$ and let $n\ge 1$ be an integer such that $p^nx^*(\omega_i)=0$ for $i=1,2,\dots,g$. Then for any $\beta\in H^0\bigl(\cA, \Omega^1_{\cA/\cO_F}  \bigr)$, we have $p^nx^*(\beta)=0$.  Let $V_\omega\coloneqq \omega+p^nH^0\bigl(\cA, \Omega^1_{\cA/\cO_F}  \bigr)$ denote the neighborhood of $\omega$.
We claim that $V_x\times V_\omega\subset U$. 

Let $(y, \gamma)\in V_x\times V_\omega$, i.e. ~$y=x\oplus_{\cA}z$, with $z\in \widehat{\cA}\bigl(p^m\cO^{(1)}\bigr)$ and $\gamma=\omega+p^n\beta$, with $\beta\in H^0\bigl(\cA, \Omega^1_{\cA/\cO_F}  \bigr)$.
We have
\[
(x\oplus_{\cA}z)^*(\omega+p^n\beta)=x^*(\omega+p^n\beta)+z^*(\omega+p^n\beta)=x^*(\omega)+p^nx^*(\beta)=x^*(\omega)=\eta,
\]
which gives our desired result. 
Note that the first equality holds because the differential $\omega+p^n\beta$ is invariant. 
For the second equality, we have, denoting $\delta:=\omega+p^n\beta\in H^0\bigl(\cA, \Omega^1_{\cA/\cO_F}  \bigr)$,  that
\[
z^*(\delta)=z^*(\delta|_{\widehat{\cA}})=(z_1,z_2,...,z_g)^*\left(\sum_{i=1}^gF_i(X_1,...,X_g)dX_i  \right)=\sum_{i=1}^gF_i(z_1,\dots,z_g)dz_i=0 
\]
where $\widehat{\cA}={\rm Spf}(\cO_F[[X_1,\dots,X_g]])$ and $F_i(X_1,\dots,X_g)\in \cO_F[[X_1,\dots,X_g]]$.
The final equality holds because $z\in \widehat{\cA}\bigl(p^m\cO^{(1)}\bigr)$, and hence $dz_1 = \cdots = dz_g = 0$. 
\end{proof}

\begin{definition}
We define the \cdef{$w$-topology on $\cB_{\cA}$} to be the projective limit topology with the $w$-topology defined on $\cA(\cO)$. 
\end{definition}
Using \autoref{lemma:GKcontinuousw} and \autoref{lemma:Fontainecontinuousw}, it is a simple exercise, which we leave to the reader, to show that $\varphi_\cA\colon\cB_\cA\to \Lie(\cA)(\cO_F)\otimes_{\cO_F}\C_p(1)$ is $G_F$-equivariant and continuous where $\cB_{\cA}$ has the $w$-topology and $\Lie(\cA)(\cO_F)\otimes_{\cO_F}\C_p(1)$ is endowed with the $p$-adic topology.

\subsection{Comparison isomorphisms via $p$-adic integration}
\label{subsec:comparision}
To conclude this section, we discuss the role of Fontaine integration in the Hodge--Tate and de Rham comparison isomorphisms for abelian varieties.

Fontaine \cite{fontaine:differentials} originally defined this integration map in order to re-prove the Hodge--Tate comparison isomorphism for abelian varieties. More precisely, let $A^\vee$ denote the dual abelian variety, which is also defined over $F$ and with good reduction and we denote by $\cA^\vee$ its N\'eron model over $\cO_F$. See \cite[Theorem 8.4.5]{BLR} for details. 
We write
\begin{align*}
& \varphi_\cA\otimes 1_{\C_p}\colon T_p(A)\otimes_{\Z_p}\C_p\to \Lie(A)(F)\otimes_F\C_p(1) \\
&\varphi_{\cA^\vee}\otimes 1_{\C_p}\colon T_p(A^\vee)\otimes_{\Z_p}\C_p\to {\rm Lie}(A^\vee)(F)\otimes_F\C_p(1),
\end{align*}
which are both surjective maps. 
By duality, the Weil pairing, and further arguments, we obtain the following isomorphism
$$
T_p(A)\otimes_{\Z_p}\C_p\cong \left(\left({\rm Lie}(A^\vee)(F)\right)^\vee\otimes_F\C_p\right)\oplus \left({\rm Lie}(A)(F)\otimes_F\C_p(1)\right),
$$ 
which is the Hodge--Tate decomposition of $T_p(A)$. We also mention work of Coleman \cite{coleman1984hodge}, which complements \cite{fontaine:differentials} and gives another proof of the Hodge--Tate decomposition.

In addition to the Hodge--Tate decomposition, there is a de Rham comparison isomorphism for an abelian variety $A$. More precisely, let $H^1_{\rm dR}(A)$ denotes the first de Rham cohomology group of the abelian variety $A$; it has a natural filtration expressed by the exact sequence:
\[
0\to H^0(A, \Omega^1_{A/F})\to H^1_{\rm dR}(A)\to \Lie(A)(F)\to 0.
\]
In \cite{Colmez1992}, Colmez defined a $p$-adic integration pairing, now called Colmez integration, which is a functorial, perfect pairing
\[
\langle \cdot ,\cdot \  \rangle_{\Cz}\colon T_p(A)\times H^1_{\rm dR}(A)\to B_{\rm dR}^+/I^2,
\]
which realizes the de Rham comparison isomorphism. Here $I\subset B_{\rm dR}^+$ is the maximal ideal of $B_{\rm dR}^+$ and the above pairing is $G_F$-equivariant in the first argument and 
respects filtrations in the second argument. 
By restricting to $H^0(A, \Omega^1_{A/F})\subset H^1_{\rm dR}(A)$, Colmez integration induces a pairing: 
\[
\langle \cdot , \cdot \rangle_{\Fo}\colon T_p(A)\times H^0(A, \Omega^1_{A/F})\to I/I^2\cong \C_p(1),
\]
which gives a map $T_p(A)\to \Lie(A)(F)\otimes_F\C_p(1)$. By \cite[Proposition 6.1]{Colmez1992}, this map coincides with Fontaine's integration $(\varphi_\cA)_{|T_p(A)}$, when $A$ has good reduction. 

 \section{\bf A $p$-adic uniformization result for the multiplicative group} 
 \label{sec:uniformizationmultiplicativegroup}
In this section, we study the Fontaine integral for the rigid multiplicative group and deduce a $p$-adic uniformization result for the rigid analytic multiplicative group, which is similar to that of an abelian variety with good reduction. 
We also show that there is a $p$-adic uniformization result for the usual multiplicative group.

More precisely, let $F$ denote a finite, unramified extension of $\Q_p$ in $\Kbar$ and denote  by $G:=\mG_m^{\rm rig} := {\rm Spm}(F\langle T,1/T\rangle )$, with the multiplicative group law, the rigid analytic multiplicative group. 
We also consider the usual, algebraic multiplicative group $\mG:=\mG_m:={\rm Spec}\bigl(F[T, 1/T] \bigr)$ with  the multiplicative group law. 
If $L$ is a subfield of $\Kbar$ containing $F$, then we have
$$
G(L)=\bigl(\cO_L^\times, \cdot \bigr)
$$ 
In particular, $G(\Kbar)=\bigl(\cO^\times, \cdot \bigr) \subset \mG(\Kbar)=\bigl(\Kbar^\times,\cdot \bigr)$. 
In \autoref{defn:Fontaineintegral}, we defined the Fontaine integral for an abelian variety with good reduction, and we point out that the definitions carry over to the rigid analytic multiplicative group. More precisely, we denote by 
$$
\cB_G:=\varprojlim\bigl(\cO^\times\stackrel{\phi}{\leftarrow}\cO^\times\stackrel{\phi}{\leftarrow}\cdots\cO^\times\stackrel{\phi}{\leftarrow}\cdots\bigr) 
$$
and by
$$
\cB_{\mG}:=\varprojlim\bigl(\Kbar^\times\stackrel{\phi}{\leftarrow}\Kbar^\times\stackrel{\phi}{\leftarrow}\cdots\Kbar^\times\stackrel{\phi}{\leftarrow}\cdots\bigr) 
$$
where $\phi(x)=x^p$, and define the group homomorphism $\varphi_G\colon\cB_G\to \C_p(1)$  via 
\[
\varphi_G\bigl((u_n)_{n\ge 0} \bigr):=\pwr{(u_n)^*\pwr{\frac{dT}{T}}}_{n\geq 0} = \pwr{\frac{du_n}{u_n}}_{n\geq 0} \in V_p(\Omega)= \C_p(1)\]
for all $(u_n)_{n\ge 0}\in \cB_G$. 

We remark that the map $\varphi_{\mG}\colon \cB_{\mG}\to \C_p(1)$ is not as obvious as if $x\in \Kbar^\times$, $dx/x$ is not necessarily an element of $\Omega$. But let us remark (we owe this observation to the referee) that if $(v_n)_{n\ge 0}\in 
\cB_{\mG}$, such that $v_0\in \cO\backslash \{0\}$, then there is $n_0\in \N$ such that $p/v_n\in \cO$ for all $n\ge n_0$, 
therefore $pd(v_n)/v_n$
is in $\Omega$ and the sequence 
\[
\left(p\frac{d(v_n)}{v_n} \right)_{n\ge n_0}\in \varprojlim\bigl(\Omega\stackrel{p}{\leftarrow}\Omega\stackrel{p}{\leftarrow}\Omega\stackrel{p}{\leftarrow}\dots \bigr)\subset \C_p(1). 
\]
Using all the notation above, we define the sequence $(a_n)_{n\ge 0}$ where
\[
\begin{array}{ll}
a_n:=p\frac{d(v_n)}{v_n} &  \mbox{ if } n\ge n_0, \text{ and }\\
a_n:=p^{n_0-n+1}\frac{d(v_{n_0})}{v_{n_0}} & \mbox{ for } 0\le n\le n_0.
\end{array}
\]
Then 
\[
(a_n)_{n\geq 0}\in \varprojlim\bigl(\Omega\stackrel{p}{\leftarrow}\Omega\stackrel{p}{\leftarrow}\Omega\stackrel{p}{\leftarrow}\cdots \bigr)\subset \C_p(1).
\]
Finally we define $\varphi_{\mG}\bigl((v_n)_{n\ge 0}\bigr)$ as the image in $\C_p(1)$ of $\displaystyle p^{-1}\bigl(a_n \bigr)_{n\ge 0}$, where $(a_n)_{n\ge 0}$ is the sequence defined above.

If $(v_n)_{n\ge 0}\in \cB_{\mG}$ such that $v_0\in \Kbar^\times\backslash\cO$, we set $\varphi_{\mG}\bigl((v_n)_{n\ge 0}\bigr):=
-\varphi_{\mG}\bigl((v_n^{-1})_{n\ge 0} \bigr)$. 
We observe that if $x\in \cB_G\subset \cB_{\mG}$ then $\varphi_G(x)=\varphi_{\mG}(x)$ as if $x=(u_n)_{n\ge 0}$ then $\displaystyle p(d(u_n)/u_n)=d(u_{n-1})/u_{n-1}$, for all $n\ge 1$. Clearly, the map thus defined, $\varphi_{\mG}\colon \cB_{\mG}\to \C_p(1)$ is a group homomorphism whose restriction to $\cB_G$ is $\varphi_G$.

\begin{remark}
\begin{enumerate}
\item[]
\item Let us first remark that as $\Omega$ is an $\cO$-module we have the canonical isomorphism 
\[\Kbar\otimes_\cO\varprojlim\bigl(\Omega\stackrel{p}{\leftarrow}\Omega\stackrel{p}{\leftarrow}\Omega\stackrel{p}{\leftarrow}\cdots\bigr)\cong \varprojlim\bigl(\Omega\stackrel{p}{\leftarrow}\Omega\stackrel{p}{\leftarrow}\Omega\stackrel{p}{\leftarrow}\cdots\bigr)\otimes_{\Z_p}\Q_p\cong \C_p(1).
\]
Let $(v_n)_{n\ge 0}\in \cB_{\mG}$ such that $v_0\in \cO$. 
Then let us observe that $\displaystyle \varphi_{\mG}\bigl((v_n)_{n\ge 0} \bigr)=v_0^{-1}\Bigl(v_0\frac{d(v_n)}{v_n}\Bigr)_{n\ge 0}$
in $\Kbar\otimes_\cO\varprojlim\bigl(\Omega\stackrel{p}{\leftarrow}\Omega\stackrel{p}{\leftarrow}\Omega\stackrel{p}{\leftarrow}\cdots\bigr)\cong \C_p(1)$. 
\item We denote by $\varphi_{\rm dR}\colon\cB_{\mG}\to B_{\rm dR}^+$ the de Rham integration map for $\mG$ defined by $\varphi_{\rm dR}\bigl((v_n)_{n\ge 0} \bigr):=\log([\tilde{v}]/v_0)$ if $v_0\in \cO$, where $\tilde{v}$ is the image of $\bigl(v_n(\mbox{ mod }p)\bigr)_{n\ge 0}\in \cO_{\C_p}^\flat$. Then, if 
we denote by $I\subset B_{\rm dR}^+$ the maximal ideal of this local ring, we have for $(v_n)_{n\ge 0}\in \cB_{\mG}$ with $v_0\in \cO$: 
$$
\varphi_{\rm dR}\bigl((v_n)_{n\ge 0} \bigr)(\mbox{ mod }I^2)=\frac{[\tilde{v}]-v_0}{v_0}(\mbox{ mod }I^2)=v_0^{-1}\Bigl(v_0\frac{d(v_n)}{v_n}\Bigr)_{n\ge 0}=\varphi_{\mG}\bigl((v_n)_{n\ge 0}\bigr),
$$
under the identification $I/I^2\cong \C_p(1)$. 
\end{enumerate}
\end{remark}

The first result of this section proves the analogue of \autoref{conj:periodic1} for $\mG$, and so implicitly for $G$. 

\begin{theorem}\label{thm:Gm}
Let $\uu = (u_n)_{n\geq 0}\in \cB_{\mG}$, i.e. a sequence such that $u_n\in \cO^\times$ and  $(u_{n+1})^p=u_n$ for all $n\ge 0$. 
Then, $\varphi_{\mG}(\uu) = 0$ if and only if $\uu$ is a periodic sequence which implies that $u_0 \in \mu_m$ with $(m,p) = 1$. 
\end{theorem}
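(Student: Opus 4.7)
The plan is to handle the two implications separately.

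For the easy direction, if $\uu$ has period $k$, then $u_0=u_k$ combined with the defining relation $u_0=u_k^{p^k}$ yields $u_k^{p^k-1}=1$, so $u_0\in\mu_{p^k-1}$, whose order is coprime to $p$. The chain rule for $u_n=u_{n+1}^p$ gives $du_n/u_n=p\cdot du_{n+1}/u_{n+1}$ in $\Omega$; iterating $k$ times and using the periodicity $u_n=u_{n+k}$ produces $(p^k-1)(du_n/u_n)=0$. Since $\Omega$ is a $p$-primary torsion $\cO$-module and $\gcd(p^k-1,p)=1$, multiplication by $p^k-1$ is injective on $\Omega$, forcing $du_n/u_n=0$ for every $n$ and hence $\varphi_{\mG}(\uu)=0$.

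For the converse, assume $\varphi_{\mG}(\uu)=0$. Using $u_0\in\cO^\times$, decompose $u_n=\omega_n v_n$ with $\omega_n\in\mu_{p'}$ the Teichmuller lift of the residue of $u_n$ in $\cO/\mathfrak{m}$, and $v_n\in 1+\mathfrak{m}$. Since $(\cdot)^p$ bijects $\mu_{p'}$ the system $(\omega_n)$ is automatically periodic, and since $d\omega_n=0$ it contributes trivially to $\varphi_G$; thus $\varphi_G$ of the associated sequence $\underline{v}=(v_n)_{n\geq 0}$ also vanishes, and the task reduces to showing $\underline{v}$ is trivial. Via the short exact sequence
\[
0\to T_p(\mu_{p^\infty})\to \varprojlim\bigl((1+\mathfrak{m}),(\cdot)^p\bigr)\to 1+\mathfrak{m}\to 0
\]
together with the observation that $\varphi_G$ restricted to $T_p(\mu_{p^\infty})$ is the natural inclusion $\Z_p(1)\hookrightarrow\C_p(1)$, sending a generator to a generator and hence injective, it is enough to prove $v_0=1$.

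To do this I pass to the de Rham integration map of the second remark, $\varphi_{\rm dR}(\underline{v})=\log([\tilde{v}]/v_0)\in I\subset B_{\rm dR}^+$, whose reduction modulo $I^2$ coincides with $\varphi_{\mG}(\underline{v})$ under $I/I^2\cong \C_p(1)$. The vanishing hypothesis thus becomes $[\tilde{v}]\equiv v_0\pmod{I^2}$; by the remark following \autoref{lemma:continuousd}, the $w$-completion of $\Kbar$ is $B_{\rm dR}^+/\mathrm{Fil}^2$, so this says that the tilt-Teichmuller of $\underline{v}$ agrees with $v_0$ in the first infinitesimal neighborhood of $\C_p$ inside $B_{\rm dR}^+$. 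Combining this with the shift identity $\varphi_{\rm dR}(\underline{v}^{(k)})=\varphi_{\rm dR}(\underline{v})/p^k$ for the truncated sequences $\underline{v}^{(k)}=(v_{n+k})_n$ (which follows from $\log v_0=p^k\log v_k$ and the multiplicativity of the Teichmuller), and analyzing how the $I^2$-vanishing propagates down the tower, one concludes that $v_0\in\mu_{p^\infty}$, whence $\underline{v}\in V_p(\mu_{p^\infty})$; the injectivity of $\varphi_G$ on the Tate module then forces $v_0=1$ and $\underline{v}$ trivial.

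The main obstacle is the last step: translating the $I^2$-vanishing of $\varphi_{\rm dR}(\underline{v})$ into $v_0\in\mu_{p^\infty}$. This requires careful manipulation of the $w$-topology on $\Kbar$ and the (non-canonical) lift $\Kbar\hookrightarrow B_{\rm dR}^+/\mathrm{Fil}^2$ provided by the $w$-completion, and is where the specific $p$-adic Hodge-theoretic input from Colmez's identification enters in a decisive way.
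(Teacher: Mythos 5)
Your easy direction is fine and a bit more self-contained than the paper's (the paper does not even spell it out); the observation that periodicity forces $(p^k-1)(du_n/u_n)=0$ and that multiplication by $p^k-1$ is injective on the $p$-primary torsion module $\Omega$ is a clean way to see it, and your Teichm\"uller decomposition $u_n=\omega_n v_n$ with the reduction to $\underline{v}\in\varprojlim(1+\mathfrak{m},(\cdot)^p)$ is also valid.

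The problem is the converse direction, and it is a genuine gap rather than a detail. You correctly reduce the statement to showing that the $I^2$-vanishing of $\varphi_{\rm dR}(\underline{v})=\log([\tilde{v}]/v_0)$ forces $v_0\in\mu_{p^\infty}$ (equivalently $v_0=1$), but you then explicitly defer this as ``the main obstacle,'' offer only a heuristic about ``how the $I^2$-vanishing propagates down the tower,'' and do not supply an argument. That step is precisely the content of the theorem; without it, nothing is proved. It is also not clear that the de Rham route closes easily: $[\tilde{v}]\equiv v_0\pmod{I^2}$ is a statement in $B_{\rm dR}^+/I^2$ about an element of $1+\mathfrak{m}_{\Kbar}$ together with a compatible system of $p$-power roots, and converting it into algebraicity of $v_0$ as a root of unity would seem to need an arithmetic input of roughly the same difficulty as what you are trying to prove.

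The paper sidesteps all of this with a short Galois-equivariance argument. Fix a finite extension $L/F$ with $u_0\in\cO_L^\times$. For $\sigma\in G_L$ one has $\sigma(\uu)\uu^{-1}\in T_p(\mG)$ because $\sigma$ fixes $u_0$; applying $\varphi_{\mG}$ and using that $\varphi_{\mG}\bigl(\sigma(\uu)\uu^{-1}\bigr)=\sigma\varphi_{\mG}(\uu)-\varphi_{\mG}(\uu)=0$, the injectivity of $\varphi_{\mG}$ on $T_p(\mG)$ (which you also invoke) forces $\sigma(\uu)=\uu$. Hence all $u_n$ lie in $\cO_L$, i.e.\ each $u_m$ is infinitely $p$-divisible in the unit group of a fixed finite extension. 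The structure theorem for units of a local field then says $u_m\in\mu_{q-1}$ where $q$ is the residue cardinality of $L$, whence periodicity. This is the step your approach is missing: the reduction from the algebraic closure to a single finite extension $L$, which converts a seemingly analytic $I^2$-condition into elementary arithmetic of a local field, is what makes the argument close, and it relies on nothing more than the injectivity of $\varphi_{\mG}$ on $\Z_p(1)$ that you have already assembled.
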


We first prove a lemma.

\begin{lemma}\label{lemma:invariantGM}
Let $\uu:=(u_n)_{n\ge 0}\in \cB_{\mG}$ be an element such that $\alpha(\uu):=u_0\in \cO_L^\times$, for some finite extension $L$ of $F$ in $\Kbar$. Then $\varphi_{\mG}(\uu)=0$ if and only if $\uu\in (\cB_{\mG})^{G_L}$.
\end{lemma}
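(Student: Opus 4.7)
The plan is to pin down the $G_L$-action on the tower $\uu$ via the associated Kummer cocycle, transport it through $\varphi_{\mG}$, and read off both implications from the resulting coboundary identity. Since $u_0 \in \cO_L^\times \subset \Kbar^{G_L}$, every $\sigma \in G_L$ fixes $u_0$, so the tower
\[
c(\sigma) \coloneqq \sigma(\uu) \cdot \uu^{-1} = \bigl(\sigma(u_n)/u_n\bigr)_{n \ge 0}
\]
lies in $\cB_{\mG}$, and in fact in $T_p(\mu_{p^\infty}) = \Z_p(1)$: the $n$th entry is a $p^n$th root of unity because $(\sigma(u_n)/u_n)^{p^n} = \sigma(u_0)/u_0 = 1$, and the compatibility $(\sigma(u_{n+1})/u_{n+1})^p = \sigma(u_n)/u_n$ is immediate. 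Thus $\sigma \mapsto c(\sigma)$ is the continuous Kummer cocycle $G_L \to \Z_p(1)$ attached to the chosen $p$-power roots of $u_0$, and by construction $\sigma(\uu) = c(\sigma) \cdot \uu$ in the multiplicative group $\cB_{\mG}$.

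Next I would apply the $G_F$-equivariant group homomorphism $\varphi_{\mG}\colon (\cB_{\mG}, \cdot) \to (\C_p(1), +)$ to get
\[
\sigma\bigl(\varphi_{\mG}(\uu)\bigr) = \varphi_{\mG}(\sigma(\uu)) = \varphi_{\mG}(c(\sigma)) + \varphi_{\mG}(\uu).
\]
By the definition of $\varphi_{\mG}$ together with \autoref{thm:Fontainedifferentials}, the restriction of $\varphi_{\mG}$ to $T_p(\mu_{p^\infty}) \subset \cB_{\mG}$ sends a compatible system $(\varepsilon_n)$ to $(d\varepsilon_n/\varepsilon_n)_n$, and under the identification $V_p(\Omega) \cong \C_p(1)$ this is precisely the canonical embedding $\Z_p(1) \hookrightarrow \C_p(1)$. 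Hence $\varphi_{\mG}(c(\sigma)) = c(\sigma)$, and the display above rearranges into the key identity
\[
(\sigma - 1)\,\varphi_{\mG}(\uu) = c(\sigma) \qquad \text{for all } \sigma \in G_L.
\]

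From this identity the lemma follows at once. If $\uu \in (\cB_{\mG})^{G_L}$, then $c \equiv 0$, so $\varphi_{\mG}(\uu) \in \C_p(1)^{G_L}$, which vanishes by Tate's classical computation of the Galois cohomology of $\C_p(1)$ for the finite extension $L/\Q_p$. Conversely, if $\varphi_{\mG}(\uu) = 0$, then $c(\sigma) = 0$ for every $\sigma \in G_L$, i.e.~$\sigma(u_n) = u_n$ for all $n$ and all $\sigma \in G_L$; thus each $u_n$ lies in $\Kbar^{G_L} = L$, and $\uu \in (\cB_{\mG})^{G_L}$.

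The only non-formal input is the identification of $\varphi_{\mG}|_{T_p(\mu_{p^\infty})}$ with the canonical embedding $\Z_p(1) \hookrightarrow \C_p(1)$, which is a direct consequence of Fontaine's explicit description of $\Omega$ via the map $\xi$. I do not anticipate a serious obstacle: the whole proof amounts to the observation that the Kummer cocycle of the tower $\uu$ is exactly the $(\sigma - 1)$-coboundary of its Fontaine integral.
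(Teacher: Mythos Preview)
Your proof is correct. The direction $\varphi_{\mG}(\uu)=0 \Rightarrow \uu\in(\cB_{\mG})^{G_L}$ is exactly the paper's argument: both use that $\sigma(\uu)\uu^{-1}\in T_p(\mG)$ and that $\varphi_{\mG}$ restricted to $T_p(\mG)$ is the canonical injection $\Z_p(1)\hookrightarrow\C_p(1)$, so the Galois coboundary of $\varphi_{\mG}(\uu)$ vanishing forces the Kummer cocycle to vanish.

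For the converse direction the two proofs diverge. You invoke Tate's theorem $\C_p(1)^{G_L}=0$ to kill the $G_L$-invariant element $\varphi_{\mG}(\uu)$. The paper instead gives a direct, elementary computation: if every $u_n\in\cO_L$, then each $du_n/u_n$ lies in the image of $\Omega^1_{\cO_L/\cO_F}$ in $\Omega$, which is annihilated by a generator $\gamma_L$ of the different of $L/F$; hence $\gamma_L\varphi_{\mG}(\uu)=0$ in $\C_p(1)$ and therefore $\varphi_{\mG}(\uu)=0$. Your route is shorter and more conceptual, but leans on a deeper input; the paper's route is self-contained and makes explicit why the differentials themselves vanish termwise. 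Both are perfectly valid, and in fact the paper uses your Tate-based argument elsewhere (e.g.\ when observing $(\cB_\cA)^{G_K}\subset\ker(\varphi_\cA)$).
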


\begin{proof}
First, we note that $\varphi_{\mG}\colon T_p(\mG)\to \C_p(1)$ is injective and its image is $\mZ_p(1)$ seen as a subgroup of $\C_p(1)$.
Let $\uu\in \cB_{\mG}$ be an element such that $\varphi_{\mG}(\uu)=0$. As for any $\sigma\in G_L$ we have, if $\uu\neq 0$, that $\sigma(\uu)\uu^{-1}\in T_p(\mG)$, we have that $\varphi_\mG(\sigma(\uu)\uu^{-1})=\sigma\bigl(\varphi_\mG(\uu)\bigr)-\varphi_\mG(\uu)=0$. Now using the injectivity of 
the restriction of $\varphi_\mG$ to $T_p(\mG)$ we see that $\sigma(\uu)\uu^{-1}=1$, i.e. ~$\sigma(\uu)=\uu$. 

Conversely, if $\uu=(u_n)_{n\ge 0}\in (\cB_{\mG})^{G_L}$,  then it follows that if denote by $\gamma_L$ a generator of the different ideal of $L$ over $F$, then 
\[
\displaystyle \gamma_L \varphi_{\mG}(\uu)=\left(p\gamma_L \frac{d(u_n)}{u_n} \right)_{n\ge n_0}=0
\]
in $\C_p(1)$. Therefore $\varphi_{\mG}(\uu)=0$.
\end{proof}

\begin{proof}[Proof of \autoref{thm:Gm}]
Let $\uu\in \cB_{\mG}$ be an element with $\varphi_{\mG}(\uu)=0$. 
Let $L$ be a finite extension of $F$ in $\Kbar$ such that
$u_0\in \cO_L$. By the above lemma $\varphi_{\mG}(\uu)=0$ is equivalent to $\uu\in (\cB_{\mG})^{G_L}$ i.e. $u_n\in \cO_L$ for every 
$n\ge 0$. Fix $m\ge 0$. Then $u_m$ is a $p^r$-th power in $L$ (i.e we have  $u_m=u_{m+r}^{p^r}$) for every $r\ge 0$, so by \cite[Chapter 1, Section 7, Proposition]{Fesenko:LocalFields} this is equivalent to the fact that $u_m$ is a $q-1$-root of $1$ in $L$, where $q$ is the cardinal of the residue field of $L$. Therefore $\varphi_{\mG}(\uu)=0$ is equivalent to $\uu$ being a periodic sequence.
\end{proof}

With this observation, we now move onto our uniformization result. 
Let us recall that we denoted $\mG:=\mG_m$, and let $\mG^w(\Kbar)$ denote $\mG(\Kbar) = \Kbar^{\times}$ with the linear $w$-topology. This topology has a base of open neighbourhoods of $1$ given by
$\{1+p^n\cO^{(1)}\}_{n\in \N}$ and for every $x\in \mG(\Kbar)=\Kbar^\times$, a base of open neighbourhoods of $x$ is given by
$\{x(1+p^n\cO^{(1)})\}_{n\in \N}$. We observe that the sugbroup $G(\Kbar)$, with the induced topology becomes an open subgroup, which will be denoted $G^w(\Kbar)$.

First, we have continuity of the Fontaine integral. 

\begin{lemma}\label{lemma:continuousGM}
The map $\psi\colon G^w(\Kbar) \to \Omega$ given by $\psi(x)=dx/x\in \Omega$ is continuous for the discrete topology on $\Omega$. 
\end{lemma}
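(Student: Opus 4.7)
The plan is to exploit the fact that $\Omega$ carries the discrete topology, so continuity of $\psi$ reduces to showing that $\psi$ is locally constant. Concretely, at each $x \in G^w(\Kbar) = \cO^{\times}$, I would exhibit a basic $w$-open neighborhood of $x$ on which $\psi$ takes the constant value $dx/x$.

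First I would take the candidate neighborhood $U := x\bigl(1+p\cO^{(1)}\bigr)$, which is a basic open set for the $w$-topology on $G(\Kbar)$. Note that for every $z \in \cO^{(1)}$ the element $1+pz$ has valuation $0$ and hence is a unit in $\cO$, so $U$ really does sit inside $G(\Kbar) = \cO^{\times}$. An arbitrary element of $U$ then has the form $y = x(1+pz)$ for some $z \in \cO^{(1)} = \ker(d)$.

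Next I would compute $\psi(y)$ using the Leibniz rule for the canonical derivation $d\colon \cO \to \Omega$:
\[
dy \;=\; d\bigl(x(1+pz)\bigr) \;=\; (1+pz)\,dx \;+\; xp\,dz \;=\; (1+pz)\,dx,
\]
the last equality because $z \in \cO^{(1)}$ forces $dz = 0$. Dividing by $y = x(1+pz)$ then gives $\psi(y) = dy/y = dx/x = \psi(x)$ as elements of $\Omega$.

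Thus $\psi$ is constant on the neighborhood $U$ of $x$, and since $x$ was arbitrary, $\psi$ is locally constant on $G^w(\Kbar)$ and therefore continuous into the discrete target $\Omega$. I do not anticipate any real obstacle: the argument essentially records that the $w$-topology was cooked up precisely so that the defining relation $\cO^{(1)} = \ker(d)$ would annihilate the multiplicative perturbations entering the $w$-neighborhood basis.
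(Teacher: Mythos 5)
Your proof is correct and takes essentially the same approach as the paper: both exploit $\cO^{(1)}=\ker(d)$ and the logarithmic-derivative identity $d(xu)/(xu)=dx/x+du/u$ to show that $\psi$ is constant on each basic $w$-neighborhood $x(1+p^n\cO^{(1)})$ (the paper phrases this as openness of each fiber $\psi^{-1}(\omega)$, which is equivalent given the discrete target).
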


\begin{proof}
Let  $\omega\in \Omega$ and let $U:=\psi^{-1}(\omega) \subset G^w(\Kbar)$. We want to show that $U$ is open.
Suppose $y\in U$, i.e. ~$y\in G^w(\Kbar)$ and $dy/y=\omega$. Let $n\in \N$ and 
$u\in y(1+p^n\cO^{(1)})$, i.e. $u=y(1+p^nv)$, with $v\in \cO^{(1)}$. We have $du/u=dy/y+d(1+p^nv)/(1+p^nv)$.
Moreover, we see that 
\[
\frac{d(1+p^nv)}{1+p^nv}=\frac{d(p^nv)}{1+p^nv}=\frac{p^ndv}{1+p^nv}=0,
\]
and so $du/u=dy/y=\omega$, i.e. ~$y(1+p^n\cO^{(1)})\subset U$. This implies that $U$ is open. 
\end{proof}

\begin{lemma}
\label{lemma:tortop}
Let  $\mG_{\rm tor}=\mG_{p{\text{-tors}}}\oplus \mG_{p'{\text{-tors}}}\subset \mG(\Kbar)$ denote the torsion subgroup of $\mG(\Kbar)$.
Then the $w$-topology of $\mG(\Kbar)$ induces the discrete topology on $\mG_{\rm tor}$.
\end{lemma}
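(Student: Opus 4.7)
The plan is to mirror the structure of \autoref{lemma:winducesdiscretetorsion} in the multiplicative setting. First I would show that the identity map $\mG^w(\Kbar)\to \mG(\Kbar)$, where the target carries the usual $p$-adic topology with basis of neighborhoods of $1$ given by $\{1+p^n\cO\}_{n\ge 1}$, is continuous. Then I would observe that the $p$-adic topology already induces the discrete topology on $\mG_{\rm tor}$, from which the same conclusion transfers to the finer $w$-topology.

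For the continuity of the identity, it suffices to check that $1+p^n\cO$ is open in $\mG^w(\Kbar)$ for each $n\ge 1$. Choosing a system of representatives $\cF\subset 1+p^n\cO$ of the quotient $(1+p^n\cO)/(1+p^n\cO^{(1)})$, one has the decomposition $1+p^n\cO=\bigcup_{x\in \cF} x(1+p^n\cO^{(1)})$, exhibiting it as a union of basic $w$-open neighborhoods. The only sanity check is that each translate $x(1+p^n\cO^{(1)})$ actually lies inside $1+p^n\cO$, which is immediate from $x\in \cO^\times$ and $\cO^{(1)}\subset \cO$.

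For the discreteness of $\mG_{\rm tor}$ inside the $p$-adic topology, it suffices to show that $(1+p\cO)\cap \mG_{\rm tor}=\{1\}$. Split $\mG_{\rm tor}=\mG_{p\text{-tors}}\oplus \mG_{p'\text{-tors}}$. A nontrivial prime-to-$p$ root of unity $\zeta$ has nontrivial reduction in the residue field of $\cO$, so $v(\zeta-1)=0$ and $\zeta\notin 1+p\cO$. A nontrivial $p^n$-th root of unity satisfies $v(\zeta-1)=1/(p^{n-1}(p-1))$, which is strictly less than $1$ since $p>2$, so again $\zeta\notin 1+p\cO$. Combining with the previous paragraph, $1+p\cO^{(1)}\subset 1+p\cO$ gives a $w$-open neighborhood of $1$ meeting $\mG_{\rm tor}$ only at $1$, and translation invariance of the $w$-topology yields discreteness.

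The argument is essentially routine, and the only mild obstacle I anticipate is purely bookkeeping: since $\cO^{(1)}$ is only an $\cO_K$-subalgebra of $\cO$ rather than an ideal, one must check that the coset decomposition above interacts correctly with the multiplicative group law. This is handled as soon as one notes $x\in \cO^\times$ for $x\in 1+p^n\cO$, so no substantive difficulty arises.
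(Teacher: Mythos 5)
Your proof is correct, but it takes a genuinely different route from the paper's. The paper argues directly in the $w$-topology using the structure of $\cO^{(1)} = \ker(d)$: for $x,\zeta\in \mu_{p^\infty}$ with $y:=x\zeta^{-1}\in 1+p^n\cO^{(1)}$, one has $dy=0$, so $y\in\cO^{(1)}$, and since $d\zeta_m/\zeta_m \neq 0$ for $m\ge 1$ (when $p>2$) one gets $\cO^{(1)}\cap\mu_{p^\infty}=\{1\}$; the prime-to-$p$ part is handled by noting $G_{p'\text{-tors}}\subset \cO_K^\times\subset(\cO^{(1)})^\times$ and that the $w$-topology restricts to the ordinary $p$-adic topology on $\cO_K^\times$. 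You instead mirror \autoref{lemma:winducesdiscretetorsion}: first establish continuity of the identity $\mG^w(\Kbar)\to\mG(\Kbar)$ toward the coarser $p$-adic linear topology, then observe $\mG_{\rm tor}$ is already discrete there via the elementary valuation estimates $v(\zeta-1)=0$ (prime-to-$p$ case) and $v(\zeta-1)=1/(p^{n-1}(p-1))<1$ ($p$-power case, using $p>2$). Your route is slightly more elementary and as a byproduct yields the exact multiplicative analogue of \autoref{lemma:winducesdiscretetorsion}(1), while the paper's is more compact and stays intrinsic to the $w$-topology; both implicitly rely on $p>2$ to exclude $-1$. One cosmetic point, which applies equally to the paper's own write-up: after splitting $\mG_{\rm tor}=\mG_{p\text{-tors}}\oplus\mG_{p'\text{-tors}}$, both arguments treat the two summands separately without explicitly disposing of a mixed element $\zeta_p\zeta_{p'}\in 1+p\cO$; this is easily repaired (reducing mod $\mathfrak{m}$ kills $\zeta_{p'}$, or take a $p^m$-th power to kill $\zeta_p$), but it is worth stating in a final write-up.
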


\begin{proof}
As $G^w(\Kbar)$ is open in $\mG^w(\Kbar)$ and $\mG(\Kbar)_{\rm tor}=G(\Kbar)_{\rm tor}$ it is enough to work with
$G(\Kbar)$.
Let $\zeta\in \mu_{p^\infty}(\Kbar)$ and let  $x\in \zeta(1+p^n\cO^{(1)})\cap \mu_{p^\infty}(\Kbar)$.
Then $y:=x\zeta^{-1}\in 1+p^n\cO^{(1)}$, so $dy=0$, i.e. $y\in \cO^{(1)}\cap \mu_{p^\infty}(\Kbar)=\{1\}$.
So $x=\zeta$. On the other hand $G_{p'{\text{-tors}}}\subset \cO_K^\times\subset \bigl(\cO^{(1)}\bigr)^\times\subset \cO^\times$ and the $w$-topology
of $\cO^\times$ induces the $p$-adic toplogy of $\cO_K^\times$, which induces the discrete topology 
on $G_{p'{\text{-tors}}}$. 
\end{proof}

The $w$-topology on $\mG(\Kbar)$ defines the projective limit topology on $\cB_{\mG}$ such that the sequence 
\begin{equation}\label{eqn:GMSES}
1\to T_p\left(\mG^w(\Kbar)\right)\cong\Z_p(1)\to \cB_{\mG}\to \mG^w(\Kbar)\to 1
\end{equation}
is an exact sequence of topological abelian groups.
By \autoref{lemma:tortop}, the topology induced by $\cB_{\mG}^w$ on $\Z_p(1)$ is the $p$-adic topology, and it follows from \autoref{lemma:continuousGM} that $\varphi_{\mG}\colon\cB_{\mG}\to \C_p(1)$ is continuous, with the $p$-adic topology on $\C_p(1):=V_p(\Omega)$ as for group homomorphisms proving continuity reduces to showing that the inverse image of a neighborhood of 0 is a neighborhood of 1. Moreover, as the $G_K$-action on $\Kbar^w$ is continuous (cf.~\cite{iovita_zaharescu}) it follows that the $G_K$-action on $\mG^w(\Kbar)=(\Kbar^\times)^w$ is continuous, and since the $G_K$-action on $\cB_{\mG}$ is continuous, the exact sequence \eqref{eqn:GMSES}  is an exact sequence of continuous $G_K$-representations. 
\autoref{thm:Gm} asserts that $\ker(\varphi_\mG) = \per(\cB_G)$, the periodic sequences in $\cB_\mG$ (cf.~\autoref{defn:periodicpaths}). 
Also, under the identification $T_p(\Omega)\otimes_{\Z_p}\Q_p\cong \C_p(1)$, $\varphi_{\mG}(\Z_p(1))$ is identified with $\Z_p(1)\subset \C_p(1)$. Therefore we have the following commutative diagram with exact rows and columns:
\[
\begin{tikzcd}[column sep = .4cm, row sep = .4cm]
{} & {} & 0 \arrow{d} & 0 \arrow{d} & {} \\
{} & {} & \per(\cB_\mG) \arrow{d} \arrow[equal]{r} & \mG_{p'-\text{tor}} \arrow{d} & {} \\
0 \arrow{r} & \mZ_p(1) \arrow{r}\arrow[equal]{d} & \cB_{\mG} \arrow{r} \arrow{d}{\varphi_G} & \mG^w(\Kbar) \arrow{r} \arrow{d} &  0 \\
0 \arrow{r} & \mZ_p(1) \arrow{r}& \mC_p(1) \arrow{r}  &\mC_p(1)/\mZ_p(1)\arrow{r} &  0
\end{tikzcd}
\]
where the equal signs mean isomorphisms. We have a similar diagram for $G$,  
in particular we have injective, continuous and $G_K$-equivariant maps 
$$
\begin{array}{ccccccccc}
&\iota_G\colon& G(\Kbar)^{(p),w}:=G^w(\Kbar)/G_{p'{\text{-tors}}}&\hookrightarrow &\C_p(1)/\Z_p(1)\\
&&\cap&&||\\
&\iota_\mG\colon& \mG(\Kbar)^{(p),w}:=\mG^w(\Kbar)/\mG_{p'{\text{-tors}}}&\hookrightarrow &\C_p(1)/\Z_p(1).
\end{array}
$$

For later use as well, we will consider a more general situation:~let $T$ be a free $\Z_p$-module of rank $m\ge 1$ with a continuous $G_F$-action, $V$ a free $\cO_F$-module of rank $s\ge 1$, where $G_F$ acts trivially on $V$, and an injective $\Z_p$-linear, $G_F$-equivariant map $\varphi\colon T\hookrightarrow V\otimes_{\cO_F}\C_p(n)$, with $n\in \Z$, $n\neq 0$.
We remark that the existence of $\varphi$ implies $T^{G_K}=0$. 

We now define a certain class of elements in $(V\otimes_{\cO_F}\C_p(n))/\varphi(T)$.

\begin{definition}
Let $x\in (V\otimes_{\cO_K}\C_p(n))/\varphi(T)$. 
We say $x$ is \cdef{algebraic} if the orbit $G_F\cdot x\subset (V\otimes_{\cO_F}\C_p(n))/\varphi(T)$ is finite. If $x$ is algebraic, then there is $L\subset \Kbar$, $[L:F]<\infty$ such that $x\in \left((V\otimes_{\cO_F}\C_p(n))/\varphi(T)\right)^{G_L}$. 
\end{definition}

\begin{definition}\label{defn:cyrstalline}
Suppose that $T$ is a crystalline $G_F$-representation, i.e. $T\otimes_{\Z_p}\Q_p$ is a crystalline $\Q_p$-representation of $G_F$. 
Let $x\in (V\otimes_{\cO_F}\C_p(n))/\varphi(T)$ be an algebraic element. 
\begin{enumerate}
\item We say that $x$ is a \cdef{crystalline} element 
if one of the following happens:
\begin{enumerate}
\item $x$ is a torsion element, or
\item  $x$ is non-torsion, and if we denote 
\[
\alpha\colon V\otimes_{\cO_F}\C_p(n)\to (V\otimes_{\cO_F}\C_p(n))/\varphi(T)
\] 
the natural projection, then we have that $\alpha^{-1}(x\Z_p) \otimes_{\mZ_p} \mQ_p$ is a crystalline $G_L$-representation, for every $L$ with $[L:F]<\infty$ such that $x\in 
\left((V\otimes_{\cO_K}\C_p(n))/\varphi(T)\right)^{G_L}$.
\end{enumerate}
\item We say that $x$ is \cdef{semi-stable} if one of the following happens:
\begin{enumerate}
\item $x$ is a torsion element, or
\item  $x$ is non-torsion, and 
 $\alpha^{-1}(x\Z_p) \otimes_{\mZ_p} \mQ_p$ is a semi-stable $G_L$-representation, for every $L$ with $[L:F]<\infty$ such that $x\in 
\left((V\otimes_{\cO_K}\C_p(n))/\varphi(T)\right)^{G_L}$.
\end{enumerate}
\end{enumerate}
\end{definition}

\begin{remark}\label{rem:crystalline}
Let us observe that if $T$ is a crystalline $G_F$ representation and $x\in \left(V\otimes_{\cO_F}\C_p(n)\right)/\varphi(T)$ is a non-torsion, algebraic element, invariant by $G_L$ for $[L:F]<\infty$, we have a natural commutative diagram with exact rows
\[
\begin{tikzcd}[row sep = 1.2em]
0\arrow{r} & T \arrow{r}{\varphi} \arrow[equal]{d}& V\otimes_{\cO_F}\C_p(n)\arrow{r}{\alpha} & \left(V\otimes_{\cO_F}\C_p(n)\right)/\varphi(T) \arrow{r} & 0\\
0\arrow{r} &T \arrow{r}{\varphi}& \alpha^{-1}(x\Z_p) \arrow{r}{\alpha} \arrow[right hook->]{u} &x \Z_p \arrow{r} \arrow[right hook->]{u}&0.
\end{tikzcd}
\]
Therefore, $\alpha^{-1}(x\Z_p)\otimes_{\Z_p}\Q_p$  is a $\Q_p$-vector space of rank ${\rm dim}_{\Z_p}(T)+1$ which is a $G_L$-representation. This representation is crystalline if and only if the Galois cohomology class defined by the second row of the diagram tensored with $\Q_p$ over $\Z_p$ lives in ${\rm H}^1_f(L, T\otimes_{\Z_p}\Q_p)$. 
\end{remark}

Now we come back to the multiplicative and respectively the rigid multiplicative group $\mG$ and respectively $G$.
In the notations above, in both cases $T:=T_p(\mG)=T_p(G)=\Z_p(1)$ and $V=\cO_F$ 

\begin{theorem}
We have that an element $x\in \C_p(1)/\Z_p(1)$ belongs to the image of $\iota_G$ if and only if $x$ is crystalline and it belongs to the image of $\iota_{\mG}$ if and only if it is semi-stable.
\end{theorem}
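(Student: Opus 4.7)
The plan is to translate both equivalences into questions about the position of a Galois cohomology class inside $H^1(G_L, \Q_p(1))$, and then invoke the classical Bloch--Kato description of $H^1_f$ and $H^1_{st}$ for $\Q_p(1)$. Fix $x \in \C_p(1)/\Z_p(1)$ algebraic with $x \in \bigl(\C_p(1)/\Z_p(1)\bigr)^{G_L}$ for some finite extension $L \subset \Kbar$ of $F$. If $x$ is torsion, then $x \in \mu_{p^\infty}(L) \subset \Q_p(1)/\Z_p(1)$; a direct computation of $\iota_G$ on primitive $p$-power roots of unity shows that every such $x$ already lies in $\mathrm{im}(\iota_G) \subset \mathrm{im}(\iota_{\mG})$, and $x$ is crystalline and semi-stable by \autoref{defn:cyrstalline}, so we are done in this case. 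So assume $x$ is non-torsion. The short exact sequence of $G_L$-modules
\[
0 \to \Z_p(1) \to \alpha^{-1}(x\Z_p) \to x\Z_p \to 0,
\]
after tensoring with $\Q_p$, defines a class $[x] \in H^1(G_L, \Q_p(1))$, and by definition $x$ is crystalline (resp.~semi-stable) precisely when $[x]$ lies in $H^1_f(G_L, \Q_p(1))$ (resp.~$H^1_{st}(G_L, \Q_p(1))$).

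The central step is to identify $[x]$ with a Kummer class whenever $x$ lies in the image of $\iota_{\mG}$. Suppose $x = \iota_{\mG}(y)$ for some $y \in L^\times$, possibly after enlarging $L$ to absorb the prime-to-$p$ torsion in the kernel of $\iota_{\mG}$. Choose a compatible system $\tilde y = (y_n)_{n \ge 0} \in \cB_{\mG}$ with $y_0 = y$, so that $\varphi_{\mG}(\tilde y) \in \C_p(1)$ is a lift of $x$. For each $\sigma \in G_L$, the ratio $\sigma(\tilde y)\,\tilde y^{-1} = \bigl(\sigma(y_n)/y_n\bigr)_{n \ge 0}$ lies in $T_p(\mG) = \Z_p(1)$, and the cocycle $\sigma \mapsto \sigma(\tilde y)\,\tilde y^{-1}$ is precisely the Kummer cocycle of $y$. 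Since $\varphi_{\mG}$ is $G_L$-equivariant and restricts on $T_p(\mG)$ to the tautological embedding $\Z_p(1) \hookrightarrow \C_p(1)$, the coboundary $\sigma \mapsto \sigma\bigl(\varphi_{\mG}(\tilde y)\bigr) - \varphi_{\mG}(\tilde y)$ in $\Z_p(1)$ coincides with the same Kummer cocycle. Hence $[x]$ equals the image of $y$ under the Kummer map $L^\times \to H^1(G_L, \Q_p(1))$.

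The theorem then follows from the classical Bloch--Kato identifications
\[
H^1_f(G_L, \Q_p(1)) \cong \cO_L^\times \otimes_{\Z} \Q_p \quad \text{and} \quad H^1_{st}(G_L, \Q_p(1)) \cong L^\times \otimes_{\Z} \Q_p,
\]
together with surjectivity of the Kummer map onto these subspaces. For the forward direction, $x \in \mathrm{im}(\iota_G)$ forces $y \in \cO_L^\times$ so that $[x] \in H^1_f$ and $x$ is crystalline; similarly $x \in \mathrm{im}(\iota_{\mG})$ gives only $y \in L^\times$ and thus $[x] \in H^1_{st}$. For the converse, a crystalline (resp.~semi-stable) class $[x]$ is represented, via Bloch--Kato and after possibly enlarging $L$, by the Kummer image of some $y \in \cO_L^\times$ (resp.~$y \in L^\times$); by injectivity of $\iota_{\mG}$ and the identification of classes above, one recovers $x$ as $\iota_G(y')$ (resp.~$\iota_{\mG}(y')$) for a suitable $p$-power root $y' \in \cO^\times$ (resp.~$y' \in \Kbar^\times$) of $y$. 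The main technical obstacle will be the explicit cocycle identification in the middle paragraph; alternatively, one may invoke the comparison with Colmez integration recalled in \autoref{subsec:comparision}, which realizes $\varphi_{\mG}$ as the reduction modulo $I^2$ of the de Rham integral to $B^+_{\rm dR}$ and thus makes the link to the classical Kummer/Tate extension completely transparent.
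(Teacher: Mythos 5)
Your proof is correct and follows essentially the same route as the paper's: both translate membership in $\mathrm{im}(\iota_G)$ (resp.\ $\mathrm{im}(\iota_{\mG})$) into the statement that a Galois cohomology class in $H^1(G_L,\Q_p(1))$ lands in $H^1_f$ (resp.\ $H^1_{st}$), and then invoke the Bloch--Kato identification of $H^1_f$ with the image of $\cO_L^\times\otimes\Q$ under the Kummer/connecting map and of $H^1_{st}$ with the full image of $L^\times\otimes\Q$. The only difference is organizational: where you verify the needed compatibility by an explicit cocycle computation (showing that the coboundary of $\varphi_{\mG}(\tilde y)$ equals the Kummer cocycle of $y$), the paper packages the same fact as the commutativity of a diagram of short exact sequences and their long exact continuous $G_L$-cohomology sequences, with the injectivity of the lower connecting map $\partial'$ following from Tate's vanishing $\C_p(1)^{G_L}=0$ --- a point you use implicitly in your converse direction and would do well to state.
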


\begin{proof}
We remark that the torsion subgroup of $\C_p(1)/Z_p(1)$ is the subgroup 
\[
\Q_p(1)/\Z_p(1)\cong \iota_G(G[p^{\infty}](\Kbar)=\mu_{p^\infty}(\Kbar)=\iota_{\mG}\bigl(\mG[p^\infty](\Kbar)\bigr).
\] 
Therefore
$\iota_G$ and respectively $\iota_{\mG}$ induce isomorphisms on the torsion subgroups of the domain and target, so we may look at the map induced by $\iota_G$, respectively $\iota_{\mG}$ on the quotients of the domain and target by the torsion subgroups i.e., we look at $\iota_G\colon\cO^\times\otimes_\Z\Q\hookrightarrow \C_p(1)/\Q_p(1)$ and $\iota_{\mG}\colon \Kbar^\times\otimes_{\Z}\Q\hookrightarrow \C_p(1)/\Q_p(1)$. 

Let $x\in \C_p(1)/\Q_p(1)$ and suppose it is algebraic, i.e. invariant under some $G_L$, for $L$ a finite extension of $F$ in $\Kbar$. Consider the exact sequence in the context of the diagram of $p$-adic $G_L$-representations
\[
\begin{tikzcd}[column sep = .4cm, row sep = .4cm]
0 \arrow{r} & V_p(G) \arrow{r} \arrow[equal]{d}{\varphi_G} & \cB_{G,L} \otimes_{\mZ} \mQ \arrow{r} \arrow{d}{\varphi_G} & \cO_L^{\times} \otimes_{\mZ} \mQ \arrow{r} \arrow{d}{\varphi_G} & 0\\
0 \arrow{r} & \mQ_p(1) \arrow{r} & \mC_p(1) \arrow{r} & \mC_p(1)/\mQ_p(1) \arrow{r} & 0.
\end{tikzcd}
\]
The $G_L$-continuous cohomology diagram associated to it is
$$
\begin{array}{ccccccccccc}
0&\longrightarrow&\cO_L^\times\otimes_\Z\Q&\stackrel{\partial}{\longrightarrow}&H^1(G_L, \Q_p(1))&\longrightarrow\\
&&\cap\iota_G&&||\\
0&\longrightarrow&\bigl(\C_p(1)/\Q_p(1)\bigr)^{G_L}&\stackrel{\partial'}{\longrightarrow}&
H^1(G_L, \Q_p(1))&\longrightarrow&0
\end{array}
$$
By \cite[Example 3.10.1 \& Proposition 5.4]{bloch_kato}, the image of $\partial$ is $H_f^1(G_L, \Q_p(1))\subset H^1(G_L, \Q_p(1))$,  and so we see that $x\in \bigl(\C_p(1)/\Q_p(1) \bigr)^{G_L}$ is in the image of $\iota_G$ if and only if $\partial'(x)\in H^1_f(G_L, \Q_p(1))$, i.e. ~if and only if $x$ is crystalline.

Similarly, we have the commutative diagram with exact rows
\[
\begin{tikzcd}[column sep = .4cm, row sep = .4cm]
0 \arrow{r} & V_p(\mG) \arrow{r} \arrow[equal]{d}{\varphi_\mG} & \cB_{\mG,L} \otimes_{\mZ} \mQ \arrow{r} \arrow{d}{\varphi_\mG} & L^{\times} \otimes_{\mZ} \mQ \arrow{r} \arrow{d}{\varphi_\mG} & 0\\
0 \arrow{r} & \mQ_p(1) \arrow{r} & \mC_p(1) \arrow{r} & \mC_p(1)/\mQ_p(1) \arrow{r} & 0.
\end{tikzcd}
\]

The $G_L$-continuous cohomology diagram associated to it is
$$
\begin{array}{ccccccccccc}
0&\longrightarrow&L^\times\otimes_\Z\Q&\stackrel{\partial_{\mG}}{\longrightarrow}&H^1(G_L, \Q_p(1))&\longrightarrow\\
&&\cap\iota_\mG&&||\\
0&\longrightarrow&\bigl(\C_p(1)/\Q_p(1)\bigr)^{G_L}&\stackrel{\partial'}{\longrightarrow}&
H^1(G_L, \Q_p(1))&\longrightarrow&0
\end{array}
$$
By Kummer theory and Bloch--Kato \cite{bloch_kato},  the image of $\partial_{\mG}$ is ${\rm H}^1(L, \Q_p(1))={\rm H}^1_{\rm st}(L, \Q_p(1))$.
Therefore an element $x\in \bigl(\C_p(1)/\Z_p(1)\bigr)^{G_L}$ is in the image of $\iota_{\mG}$ if and only if it is semi-stable.
\end{proof}

\begin{remark}
We observe that the sequence $\bigl(\iota_{\mG}(1+p^n\zeta_{2n})\bigr)_{n\ge 0}$ is unbounded in $\C_p(1)/\Z_p(1)$, therefore 
$\iota_{\mG}$ is not continuous for the $p$-adic topology on $\mG(\Kbar)$ i.e. the topology having a base of neighbourhoods of $1$ given by $\bigl(1+p^n\cO \bigr)_{n\ge 0}$. Therefore $\iota_{\mG}$ does not extend to $\mG(\C_p)=\mC_p^\times$. 
 \end{remark}

\section{\bf The result on the zeroes of the Fontaine integral}
\label{sec:conjecture}
In this section, we return to the study of an abelian variety $A$ over the finite, unramified extension $F$ of $\Q_p$, with good reduction.
Recall that the Fontaine integral $\varphi_\cA\colon\cB_\cA\to {\rm Lie}(A)(F)\otimes_F\C_p(1)$ will vanish on an element $\underline{u} = (u_n)_{n\geq 0} \in \cB_{\cA}$ if $u_n \in \cO_K$ for all $n$ i.e., if the path $\underline{u}$ lies in $(\cB_{\cA})^{G_K}$. 
It is natural to speculate whether the Fontaine integral restricted to $T_p(A)$ will be injective away from unramified paths which live in $T_p(A)$. 
To make this question easier to study, we will impose the following assumption throughout. 

\begin{assumption}\label{assumption}
We assume that the abelian variety $A$ defined over $F$ satisfies $T_p(A)^{G_K}=0$. 
\end{assumption}

\begin{remark}
\autoref{assumption} is not very restrictive. 
For example, if $A$ is an elliptic curve, \autoref{assumption} is equivalent to the property that $A$ does not have {\rm CM} by a quadratic imaginary field $M$, in which $p$ is split (see e.g., \cite[Theorem 2.11]{ozeki2010torsion} and \cite[A.2.4]{serreabelianl-adic} for details). 
We refer the reader to \cite[A.2.3]{serreabelianl-adic} for further discussion of the relationship between complex multiplication and \autoref{assumption}. 
\end{remark}

We begin our analysis of $\ker(\varphi_{\cA})$ by identifying two of its subgroups. 
First, we again remark that $(\cB_\cA)^{G_K}\subset \ker(\varphi_\cA)$. Indeed, this follows from the above discussion, or one may deduce it from the fact that 
\[
\left(\Lie(\cA)(\cO_F)\otimes_{\cO_F}\C_p(1)\right)^{G_K}=0,
\]
which is a consequence of a result of Tate \cite[Theorem 2]{tate:pdiv}. We can easily determine another significant subgroup of $\ker(\varphi_\cA)$.

\begin{definition}\label{defn:periodicpaths}
A path $\uu = (u_n)_{n\geq 0} \in \cB_{\cA}$ is \cdef{periodic} if there exists some $k\geq 1$ such that $u_n = u_{n+k}$ for all $n\ge 0$. More precisely, a periodic path is of the form
\begin{equation}\label{eqn:periodicsequence}
\underline{u}:= \left(u_0, [p^{k-1}]u_0, [p^{k-2}]u_0\dots,[p]u_0, u_0, [p^{k-1}]u_0,\dots,[p]u_0, u_0, [p^{k-1}]u_0\dots\right), 
\end{equation}
where, let us remark that $[p^k]u_0=u_0$.
We define $\per(\cB_\cA) \subset \left(\cB_\cA\right)^{G_K}$ to be the subgroup of periodic paths of $\cB_{\cA}$. 
\end{definition}

Periodic paths enjoy the following properties.

\begin{lemma}\label{lemma:periodicprimetop}
\begin{enumerate}
\item []
\item Let $\uu = (u_n)_{n\geq 0} \in \cB_{\cA}$ be a periodic path. Then $u_0$ is a prime-to-$p$ torsion point of $\cA(\cO)$ and $\uu \in (\cB_{\cA})^{G_K}$. 
\item Suppose that $A$ satisfies \autoref{assumption}, and let $u$ be a prime-to-$p$ torsion point of $\cA$. 
Then, there is a unique periodic path $\uu = (u_n)_{n\geq 0} \in (\cB_{\cA})^{G_K}$ such that $u_0= u$.  
\item The subgroup $\per(\cB_\cA)$ is isomorphic to the subgroup of prime-to-p torsion points on $\cB_{\cA}$ and the map $\uu = (u_n)_{n\geq 0} \mapsto u_0$ gives an isomorphism of $\per(\cB_\cA)$ to the  subgroup of prime-to-p torsion points on $\cA(\cO)$. 
\end{enumerate}
\end{lemma}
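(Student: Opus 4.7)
The three parts all rest on the following elementary identity: if $\uu = (u_n)_{n\ge 0} \in \cB_\cA$ has period $k$, so $u_n = u_{n+k}$, then $u_n = [p]^k u_{n+k} = [p^k] u_n$, whence $[p^k - 1] u_n = 0$. Since $\gcd(p^k - 1, p) = 1$, every entry $u_n$ is a prime-to-$p$ torsion point of $\cA$, which immediately gives the torsion assertion in (1). For the $G_K$-invariance in (1), I would invoke good reduction: $\cA[p^k - 1]$ is an étale finite group scheme over $\cO_F$, and since $\cO_K = \cO_F^{\mathrm{ur}}$ is strictly henselian with residue field $\overline{\FF}_p$, all its geometric sections are already defined over $\cO_K$. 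Thus each $u_n \in \cA(\cO_K)$, so $\uu \in (\cB_\cA)^{G_K}$.

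For part (2), given a prime-to-$p$ torsion point $u$ of order $m$, I would choose $q \in \Z$ with $pq \equiv 1 \pmod{m}$ and set $u_n := [q^n] u$. Then $[p] u_{n+1} = [pq \cdot q^n] u = [q^n] u = u_n$ (using that $m$ kills $u$), so $\uu \in \cB_\cA$; periodicity with period the multiplicative order of $p$ in $(\Z/m\Z)^\times$ is immediate from $[q^k] u = u$. For uniqueness, the difference $\underline{w}$ of two periodic paths with the same initial entry is a periodic path with $w_0 = 0$; part (1) makes each $w_n$ a prime-to-$p$ torsion point, while $[p^n] w_n = w_0 = 0$ also places $w_n$ in $\cA[p^n]$, and coprimality forces $w_n = 0$. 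Alternatively, $\underline{w} \in T_p(A) \cap (\cB_\cA)^{G_K} = T_p(A)^{G_K}$, which vanishes by \autoref{assumption}.

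For part (3), I would first identify $\per(\cB_\cA)$ with the prime-to-$p$ torsion subgroup of $\cB_\cA$. The inclusion $\subseteq$ is the opening observation. Conversely, if $[N] \uu = 0$ with $\gcd(N, p) = 1$, then $(u_n)$ takes values in the finite group $\cA[N]$, on which $[p]$ acts as a bijection; the forced recursion $u_{n+1} = [p]^{-1} u_n$ then takes only finitely many values, so $\uu$ is periodic. The bijection $\per(\cB_\cA) \to \{\text{prime-to-}p \text{ torsion of } \cA(\cO)\}$ via $\uu \mapsto u_0$ is now tautological: well-defined by (1), surjective by the existence half of (2), and injective by the uniqueness half of (2).

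The only non-formal ingredient is the étale-descent used to see that prime-to-$p$ torsion in $\cA(\cO)$ descends to $\cA(\cO_K)$ in (1); everything else is direct manipulation of the recursion $[p] u_{n+1} = u_n$, and \autoref{assumption} enters only as an alternative (and more conceptual) route to the uniqueness in (2).
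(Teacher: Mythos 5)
Your proof is correct and fills in exactly the details the paper leaves implicit: the paper's own ``proof'' consists only of the remark that prime-to-$p$ torsion of $\cA(\cO)$ is unramified (hence lands in $\cA(\cO_K)$) and that the rest is an exercise, and you have carried out that exercise faithfully. The one place where you go slightly beyond what the paper asks for is in the uniqueness part of (2): your first argument --- that a periodic path $\underline{w}$ with $w_0 = 0$ has each $w_n$ both prime-to-$p$ torsion (by (1)) and $p^n$-torsion (from $[p^n]w_n = w_0 = 0$), hence zero --- does not invoke \autoref{assumption} at all, showing that the hypothesis $T_p(A)^{G_K}=0$ is not actually needed for this lemma; the alternative route you mention through $T_p(A)\cap(\cB_\cA)^{G_K}=T_p(A)^{G_K}=0$ is the one that would require it. Either argument is fine, and both square with the paper's framework.
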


\begin{proof}
After recalling the well-known fact that a prime-to-$p$ torsion point of $\cA(\cO)$ is unramified i.e., is defined over $K$, the proofs of these statements are exercises, which we leave to the reader.
\end{proof}

With these definitions established, we can now state our result on the vanishing of the Fontaine integral.

\begin{theorem}[$=$\autoref{conj:periodic1}]\label{conj:periodic}
Let $A$ be an abelian variety over $F$, with good reduction, and let $\cA$ denote its N\'eron model. 
Suppose that $A$ satisfies \autoref{assumption}. Then we have that 
\[\per(\cB_\cA)=\left(\cB_\cA\right)^{G_K}=\ker(\varphi_\cA).\]
\end{theorem}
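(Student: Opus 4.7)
Two of the three inclusions are already recorded in the excerpt: $\per(\cB_\cA)\subseteq (\cB_\cA)^{G_K}$ by \autoref{lemma:periodicprimetop}, and $(\cB_\cA)^{G_K}\subseteq \ker(\varphi_\cA)$ because $\bigl(\Lie(A)(F)\otimes_F\C_p(1)\bigr)^{G_K}=0$ by Tate. My plan is to establish the converse inclusions $\ker(\varphi_\cA)\subseteq (\cB_\cA)^{G_K}$ and $(\cB_\cA)^{G_K}\subseteq \per(\cB_\cA)$, and, as announced in the text, to bootstrap them from the injectivity of the Fontaine integral on the Tate module of the formal group $\widehat A$.

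The central lemma to secure is $\ker(\varphi_\cA|_{T_p(A)})=0$. I would derive this from injectivity on $T_p(\widehat A)$ using the connected-\'etale sequence
\[
0\to T_p(\widehat A)\to T_p(A)\to T_p(\widetilde A)\to 0
\]
together with the unramifiedness of $T_p(\widetilde A)$. For $t\in T_p(A)\cap \ker(\varphi_\cA)$, the image $\bar t\in T_p(\widetilde A)$ is $G_K$-fixed, so for every $\sigma\in G_K$ the difference $\sigma(t)-t$ lies in $T_p(\widehat A)\cap \ker(\varphi_\cA)$; injectivity on the formal part forces $\sigma(t)=t$, hence $t\in T_p(A)^{G_K}=0$.

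Granting injectivity of $\varphi_\cA|_{T_p(A)}$, the restriction of $\alpha$ to $\ker(\varphi_\cA)$ becomes an injection $\alpha\colon\ker(\varphi_\cA)\hookrightarrow \cA(\cO)$, because $\ker(\alpha)\cap\ker(\varphi_\cA)=T_p(A)\cap\ker(\varphi_\cA)=0$. For any $\uu\in \ker(\varphi_\cA)$ and $\sigma\in G_K$, the path $\sigma(\uu)-\uu$ lies in $\ker(\varphi_\cA)$ by $G_F$-equivariance, and its components form a compatible system of $p^n$-th roots of $\sigma(u_0)-u_0$. If the projection $\sigma(u_0)-u_0$ vanishes, then $\sigma(\uu)-\uu\in T_p(A)\cap \ker(\varphi_\cA)=0$, so $\sigma(\uu)=\uu$. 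To finish, I would show that $\alpha(\ker(\varphi_\cA))\subseteq A_{p'{\text{-tors}}}(\overline K)$, which is automatically defined over $K$ and hence $G_K$-fixed. This uses the continuity of $\varphi_\cA$ for the $w$-topology, under which the torsion of $\cA(\cO)$ is discrete (\autoref{lemma:winducesdiscretetorsion}), combined with the closure of $\ker(\varphi_\cA)$ under the shift $\uu\mapsto (u_{n+k})_n$ (which preserves $\ker(\varphi_\cA)$ because $V_p(\Omega)$ is $p$-torsion-free), pushing every element of $\ker(\varphi_\cA)$ into arbitrarily small neighbourhoods of the discrete subgroup $A_{p'{\text{-tors}}}$. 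Both remaining inclusions then follow simultaneously: $\ker(\varphi_\cA)\subseteq (\cB_\cA)^{G_K}$ since every $u_0=\alpha(\uu)$ lies in $\cA(\cO_K)$, and $(\cB_\cA)^{G_K}\subseteq \per(\cB_\cA)$ since such a $u_0$ is prime-to-$p$ torsion, whence \autoref{lemma:periodicprimetop}(2) supplies the unique periodic lift.

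\textbf{Main obstacle.} The genuine difficulty is injectivity of $\varphi_\cA$ on $T_p(\widehat A)$ itself---this is not a cohomological or formal statement but a substantive $p$-adic analytic fact (addressed in the next section, following Colmez, with an alternative proof in the Appendix). All the remaining steps are formal manipulation with Galois invariants and the $w$-topology, though some care is required in handling the multiple topologies in play and in verifying the crucial density claim for $\alpha(\ker(\varphi_\cA))$.
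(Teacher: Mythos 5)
Your reduction to injectivity of $\varphi_\cA|_{T_p(\widehat A)}$ is the same as the paper's, and the derivation of injectivity on all of $T_p(A)$ via the connected--\'etale sequence is correct --- in fact, your version is slightly more economical than the paper's (you conclude $\sigma(t)=t$ directly and invoke $T_p(A)^{G_K}=0$, whereas the paper detours through the connecting homomorphism $\delta$, which is not actually needed). Your invariance lemma (if $\uu\in\ker(\varphi_\cA)$ has $u_0\in \cA(\cO_L)$, then $\uu\in(\cB_\cA)^{G_L}$) is likewise correct and coincides with the paper's \autoref{lemma:invariants}. The strategic decision to show $\alpha(\ker(\varphi_\cA))\subseteq A_{p'{\text{-tors}}}(\Kbar)$ and deduce both remaining inclusions simultaneously is also sound.

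The gap is the ``density claim.'' You propose to show that shifting a path $\uu\mapsto(u_{n+k})_n$ inside $\ker(\varphi_\cA)$ pushes $u_k$ into arbitrarily small $w$-neighbourhoods of $A_{p'{\text{-tors}}}$; but this is not what happens, and no purely topological argument seems to supply the needed conclusion. In fact shifting moves points \emph{away} from the identity of the formal group: if $u_n=[p^k]u_{n+k}$ and $\log_A(u_0)\neq 0$, then $v(\log_A(u_k))=v(\log_A(u_0))-k$ is strictly decreasing, so the shifted points do not approach the discrete torsion locus; and the $w$-topology, being finer than the $p$-adic one, does not make this any better. What actually closes the argument --- and what the paper uses in Case~3 of the proof of \autoref{thm:reductiontoformal}.(2) --- is an arithmetic input, not a topological one. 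Once the invariance lemma has placed $\uu$ in $(\cB_\cA)^{G_L}$ for $L$ a finite extension of $F$, all $u_n$ lie in $\cA(\cO_L)=A(L)$, so $u_0=p^n u_n\in\bigcap_n p^n A(L)$; the paper extracts from this a $G_L$-cohomology diagram and applies \cite[Example 3.11]{bloch_kato}, which says the Kummer connecting map $\partial\colon A(L)\otimes_\Z\Q\to H^1(G_L,V_p(A))$ is injective, forcing the image of $(\cB_\cA\otimes\Q)^{G_L}$ in $A(\cO_L)\otimes\Q$ to vanish. Equivalently (this is Mattuck's structure theorem for $A(L)$ as a compact $p$-adic Lie group with finite torsion), $\bigcap_n p^n A(L)=A(L)_{p'{\text{-tors}}}$. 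Either phrasing gives $u_0\in A_{p'{\text{-tors}}}(\Kbar)$; one then compares $\uu$ with the unique periodic lift and concludes via the already-established injectivity on $T_p(A)$. You should replace the density heuristic with this step. You are also missing the easy but separate observation (the paper's Case~2) that a $p$-power torsion $u_0$ in $\ker(\varphi_\cA)$ forces $\uu=0$, since then $p^a\uu\in T_p(A)\cap\ker(\varphi_\cA)=0$ and $\cB_\cA$ is $p$-torsion-free.
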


Below, we make a remark relating \autoref{conj:periodic} to previous literature on determining the zeros of $p$-adic integrals. 

\begin{remark}
We wish to highlight the similarities between our \autoref{conj:periodic} and \cite[Theorem 2.11]{coleman1985torsion}. 
The result of Coleman says that the torsion points on an abelian variety correspond to the set of common zeros of $p$-adic abelian integrals of the first kind on the abelian variety. 
Our \autoref{conj:periodic} shows that for an abelian variety $A$ over $K$ satisfying \autoref{assumption}, the common zeros of the Fontaine integral are precisely the torsion points on $\cB_{\cA}$. 
Indeed, recall that \autoref{lemma:periodicprimetop}.3 shows the subgroup $\per(\cB_{\cA})$ coincides with the prime-to-$p$ torsion on $\cB_{\cA}$, and since $\cB_{\cA}$ will have no $p$-power torsion by construction, the statement follows. 
\end{remark}

\section{\bf The proof of \autoref{conj:periodic}}
\label{sec:evidence}
In this section, we first describe the reduction of the proof of \autoref{conj:periodic} to a statement about injectivity of the Fontaine integral restricted to the Tate module of the formal group associated to our abelian variety and then  prove this injectivity following an idea of P.~Colmez. 
 
\subsection{Reduction of \autoref{conj:periodic} to the setting of the Tate module of the formal group}
\label{sec:reductiontoformal}
Let $A/F$ be an abelian variety with good reduction, $\cA$ its N\'eron model over $\cO_F$ and we assume that
$A$ satisfies \autoref{assumption}. 
Let $\widehat{\cA}$ denote the formal completion of $\cA$ along the identity of its special fiber, i.e. the formal group of $\cA$. Then, if we denote by $g$ the dimension of $A$ over ${\rm Spec}(F)$, we have that 
\[
\widehat{\cA}\cong{\rm Spf}\left( \cO_F[[X_1,\dots,X_g]]\right)
\] 
as formal schemes, with a formal group-law given by $g$ power series 
\[
\widehat{\cA}_1(X_1,\dots,X_g,Y_1,\dots,Y_g),\dots,\widehat{\cA}_g(X_1,\dots,X_g,Y_1,\dots,Y_g).
\]
Let $T_p(\widehat{\cA})\subset T_p(A)$ be the Tate module of the formal group and let us recall that the restriction to the formal group defines an isomorphism $H^0(\cA, \Omega^1 _{\cA/\cO_F})\cong {\rm Inv}(\widehat{\cA})$, where the second $\cO_F$-module is the module of invariant differentials of the formal group. We also have natural isomorphisms $\Lie(\cA)(\cO_F)\cong {\rm Lie}(\widehat{\cA})(\cO_F)$, where the second module is the Lie algebra of the formal group, i.e. its tangent space at the origin.

Let $(\varphi_\cA)_{|T_p(\widehat{\cA})}\colon T_p(\widehat{\cA})\to {\rm Lie}(\widehat{\cA})(\cO_F)\otimes_{\cO_F}\C_p(1)$ denote the restriction of 
$\varphi_\cA$ to $T_p(\widehat{\cA})$.
The main result of this subsection proves that if $(\varphi_\cA)_{|T_p(\widehat{\cA})}$ is injective, then \autoref{conj:periodic} holds.

\begin{theorem}\label{thm:reductiontoformal}
Assume that $A$ satisfies \autoref{assumption} and is such that $\ker((\varphi_\cA)_{|T_p(\widehat{\cA})})=0$. Then we have
\begin{enumerate}
\item The restriction $(\varphi_\cA)_{|T_p(A)}\colon T_p(A)\to \Lie(\cA)(\cO_F)\otimes_{\cO_F}\C_p(1)$ is injective.
\item We have $\ker(\varphi_\cA)=\per(\cB_\cA)=\left(\cB_\cA\right)^{G_K}$ i.e., \autoref{conj:periodic} holds. 
\end{enumerate}
\end{theorem}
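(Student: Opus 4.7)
The plan is to dispatch Part (1) via a direct Galois-cohomology argument using the connected-\'etale filtration of $\cA[p^\infty]$, and then leverage the resulting injectivity on $T_p(A)$ to reduce Part (2) to a specialization argument that concentrates the difficulty on the formal group, where the hypothesis on $T_p(\widehat{\cA})$ applies.

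For Part (1), consider the $G_F$-equivariant short exact sequence of Tate modules
\[
0 \to T_p(\widehat{\cA}) \to T_p(A) \to T_p(A^{\mathrm{et}}) \to 0
\]
coming from the connected-\'etale filtration. Since $K = \mQ_p^{\mathrm{ur}}$ has residue field $\overline{\mF}_p$, the group $G_K$ acts trivially on $T_p(A^{\mathrm{et}})$. For $t \in T_p(A) \cap \ker(\varphi_\cA)$ and $\sigma \in G_K$, the image of $t$ in $T_p(A^{\mathrm{et}})$ is $G_K$-fixed, so $\sigma(t) - t \in T_p(\widehat{\cA})$; by $G_F$-equivariance, $\sigma(t) - t \in \ker(\varphi_\cA)$ as well. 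The hypothesis $\ker((\varphi_\cA)|_{T_p(\widehat{\cA})}) = 0$ forces $\sigma(t) = t$, so $t \in T_p(A)^{G_K}$, which vanishes by \autoref{assumption}.

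For Part (2), the chain $\per(\cB_\cA) \subset (\cB_\cA)^{G_K} \subset \ker(\varphi_\cA)$ is already known (the second inclusion from Tate's vanishing $(\Lie(A)\otimes_F \C_p(1))^{G_K} = 0$). Part (1) implies that the restriction of $\alpha \colon \cB_\cA \to \cA(\cO)$ to $\ker(\varphi_\cA)$ is injective, since its kernel is $T_p(A) \cap \ker(\varphi_\cA) = 0$. For any $\uu \in \ker(\varphi_\cA)$, use Fontaine's decomposition $A(\overline{K}) = A^{(p)}(\overline{K}) \oplus A_{p'}(\overline{K})$ together with the unique periodic path lifting a prime-to-$p$ torsion point (\autoref{lemma:periodicprimetop}) to subtract the periodic lift of the $A_{p'}$-component of $u_0$; this reduces to the situation $u_0 \in A^{(p)}(\overline{K})$, in which case it suffices to prove $\uu = 0$.

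The specialization map $\cA(\cO) \to \widetilde{A}(\overline{k})$ sends $A^{(p)}(\overline{K})$ into $\widetilde{A}^{\mathrm{et}}[p^\infty](\overline{k})$, so the reduced tower $(\widetilde{u}_n)$ defines a class in $V_p(\widetilde{A}^{\mathrm{et}}) \cong T_p(A^{\mathrm{et}}) \otimes_{\mZ_p} \mQ_p$. Using $G_F$-equivariance of $\alpha|_{\ker(\varphi_\cA)}$ together with the connecting homomorphism $T_p(A^{\mathrm{et}}) \hookrightarrow H^1(G_K, T_p(\widehat{\cA}))$, which is injective under \autoref{assumption} by the long exact sequence of $G_K$-cohomology, one shows that this specialization class must vanish, so that each $u_n$ in fact lies in the formal group $\widehat{\cA}(\mathfrak{m})$. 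At that point $\uu$ belongs to $\cB_{\widehat{\cA}} \cap \ker(\varphi_\cA)$; since $K/\mQ_p$ is unramified, the formal logarithm $\log_{\widehat{\cA}} \colon \widehat{\cA}(\mathfrak{m}) \to \Lie(A)(\C_p)$ converges and satisfies $\log(u_0) = p^n \log(u_n)$ with $\log(u_n)$ confined to a bounded submodule, forcing $\log(u_0) = 0$ and hence $u_0 = 0$. Then $\uu \in T_p(A) \cap \ker(\varphi_\cA) = 0$ by Part (1), and $\uu$ is the trivial periodic path. The principal obstacle is the cohomological vanishing step: converting the condition $\varphi_\cA(\uu) = 0$ into the vanishing of the specialization class in $V_p(\widetilde{A}^{\mathrm{et}})$ through the $H^1$ connecting homomorphism. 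Once that obstruction is cleared, the formal logarithm argument closes the argument quickly, and the hypothesis on $T_p(\widehat{\cA})$ re-enters implicitly through Part (1).
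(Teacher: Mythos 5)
Your Part (1) is essentially the paper's argument, streamlined: both use the connected--\'etale sequence and the fact that $G_K$ acts trivially on $T_p(A^{\mathrm{et}})$ to deduce $\sigma t - t \in T_p(\widehat{\cA})$, and then conclude via the hypothesis on $T_p(\widehat{\cA})$ and the assumption $T_p(A)^{G_K}=0$. That part is fine.

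Your Part (2), however, departs from the paper's route and has genuine gaps. The paper's strategy is: first prove a lemma (the analogue of what you tacitly need) stating that $\varphi_\cA(\uu)=0$ and $u_0\in\cA(\cO_L)$ force $\uu\in(\cB_\cA)^{G_L}$; then split into torsion and non-torsion cases and, in the non-torsion case, apply the Bloch--Kato theorem that the Kummer map $\cA(\cO_L)\otimes\Q\to H^1(G_L,V_p(A))$ is injective with image $H^1_f$. You replace this with a specialization-plus-formal-logarithm argument, and two steps in it do not hold up. First, the claim that the reduced tower $(\widetilde{u}_n)$, which lives in $V_p(\widetilde{A}^{\mathrm{et}})$, must vanish is the entire content of the non-torsion case, and you do not prove it; the injectivity of $T_p(A^{\mathrm{et}})\hookrightarrow H^1(G_K,T_p(\widehat{\cA}))$ is a statement about the Tate module, and I see no mechanism by which it converts the hypothesis $\varphi_\cA(\uu)=0$ into the vanishing of that specialization class (you yourself flag this as ``the principal obstacle''). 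Second, even granting that the $u_n$ land in $\widehat{\cA}(\mathfrak{m})$ and that $\log(u_0)=0$ (which itself requires first invoking the $G_L$-invariance lemma to get boundedness of $\log(u_n)$, a step you do not state), the conclusion ``hence $u_0=0$'' is wrong: $\ker(\log_{\widehat{\cA}})$ is the $p$-power torsion of $\widehat{\cA}$, not $\{0\}$. That last gap is repairable (conclude $u_0$ is $p$-power torsion, then $p^a\uu\in T_p(A)\cap\ker(\varphi_\cA)=0$, and $\cB_\cA$ has no $p$-torsion), but the first gap is the heart of the matter and is left unresolved; the paper closes exactly that hole with the Bloch--Kato input, which your proposal avoids without supplying a substitute.
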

 
\begin{proof}[Proof of \autoref{thm:reductiontoformal}.(1)]
We first prove $(\varphi_\cA)_{|T_p(A)}$ is injective. 
If we denote by $\cA(p)$ and respectively $\widehat{\cA}(p)$, the $p$-divisible groups attached to the abelian scheme $\cA$ and respectively its formal group scheme, then we have an exact sequence of $p$-divisible groups
\[
0\to \widehat{\cA}(p)\to \cA(p)\to \cA(p)/\widehat{\cA}(p)\to 0.
\] 
As $\widehat{\cA}(p)$ is identified with the connected sub-$p$-divisible group of $\cA(p)$, the $p$-divisible group $\cA(p)/\widehat{\cA}(p)$
is an \'etale $p$-divisible group over $\cO_K$, therefore it is isomorphic, as $p$-divisible groups, to $\left(\Q_p/\Z_p \right)^{2g-h}$, where $h$ is the height of $\widehat{\cA}$. The exact sequence of $p$-divisible groups above defines and exact sequence of $G_K$-representations 
\begin{equation}\label{eqn:SESTatemodules}
0\to T_p(\widehat{\cA})\to T_p(A) \stackrel{\psi}{\to} \left(\Z_p\right)^{2g-h}\to 0,
\end{equation}
The exact sequence \eqref{eqn:SESTatemodules} gives the long exact sequence in cohomology:
\[
0 = T_p(A)^{G_K}\to \left(\Z_p\right)^{2g-h}\stackrel{\delta}{\to}H^1(G_K, T_p(\widehat{\cA}))
\]
where $T_p(A)^{G_K}=0$ by \autoref{assumption}. 
If $(\varphi_\cA)_{|T_p(A)}(x) = 0$, then $(\varphi_\cA)_{|T_p(A)}((\sigma - 1)x) = 0$ for all $\sigma\in G_K$. 
However, $(\sigma - 1)x$ lies in $T_p(\widehat{\cA})$ and the assumption then implies that $(\sigma - 1)x = 0$ for all $\sigma \in G_K$. In particular, $\delta(\psi(x)) = 0$ by definition of the connecting map.  
As $\delta$ is injective, it follows that $\psi(x)=0$, i.e. $x\in T_p(\widehat{\cA})$. 
By our assumption that  $\ker((\varphi_\cA)_{|T_p(\widehat{\cA})})=0$, we have that $x=0$.  
\end{proof}

Before we prove \autoref{thm:reductiontoformal}.(2), we need a lemma. 

\begin{lemma}\label{lemma:invariants}
Keep the notations and assumptions from \autoref{thm:reductiontoformal}. 
Let $\uu\in \cB_\cA$ be such that $u_0=\alpha(\uu)\in \cA(\cO_L)$, for $L/F$ a finite extension in $\overline{K}$. 
If $\varphi_\cA(\uu)=0$, then $\uu\in (\cB_\cA)^{G_L}$, where $G_L={\rm Gal}(\overline{K}/L)$.
\end{lemma}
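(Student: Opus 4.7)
The plan is to exploit the $G_F$-equivariance of $\varphi_\cA$ together with the injectivity of $(\varphi_\cA)_{|T_p(A)}$ already established in \autoref{thm:reductiontoformal}.(1). The key observation is that if $u_0$ is $G_L$-fixed, then for any $\sigma \in G_L$ the difference $\sigma(\uu) - \uu$ has trivial image under $\alpha$, so by the short exact sequence \eqref{eqn:SESpaths} it lives inside $T_p(A)$ — precisely where $\varphi_\cA$ is now known to be injective.

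More concretely, I would fix an arbitrary $\sigma \in G_L$ and consider $\sigma(\uu) - \uu \in \cB_\cA$. Applying $\alpha$ gives $\alpha(\sigma(\uu) - \uu) = \sigma(u_0) - u_0$, which vanishes because $u_0 \in \cA(\cO_L)$. By the exact sequence
\[
0 \to T_p(A) \to \cB_\cA \xrightarrow{\alpha} \cA(\cO) \to 0,
\]
this places $\sigma(\uu) - \uu$ inside $T_p(A)$.

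Next, I would invoke the $G_F$-equivariance of the extended Fontaine integral (\autoref{defn:Fontaineintegral}) to compute
\[
\varphi_\cA\bigl(\sigma(\uu) - \uu\bigr) = \sigma\bigl(\varphi_\cA(\uu)\bigr) - \varphi_\cA(\uu) = 0 - 0 = 0,
\]
where the hypothesis $\varphi_\cA(\uu) = 0$ is used. Combined with the previous step, this shows $\sigma(\uu) - \uu \in \ker\bigl((\varphi_\cA)_{|T_p(A)}\bigr)$.

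Finally, since \autoref{thm:reductiontoformal}.(1) has been established (under the running hypothesis that $\ker((\varphi_\cA)_{|T_p(\widehat{\cA})}) = 0$ and \autoref{assumption}), this kernel is trivial, so $\sigma(\uu) = \uu$. Because $\sigma \in G_L$ was arbitrary, $\uu \in (\cB_\cA)^{G_L}$. There is no real obstacle in this argument; it is a direct application of previously proven material, and the only thing to double-check is that $\varphi_\cA$ is genuinely $G_F$-equivariant on the full universal cover $\cB_\cA$ (rather than merely on $T_p(A)$), which is built into \autoref{defn:Fontaineintegral}.
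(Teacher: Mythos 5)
Your proof is correct and follows essentially the same route as the paper: reduce to $\sigma(\uu)-\uu\in T_p(A)$ via the exact sequence \eqref{eqn:SESpaths}, use $G_F$-equivariance of $\varphi_\cA$ and the hypothesis $\varphi_\cA(\uu)=0$ to conclude $\varphi_\cA(\sigma(\uu)-\uu)=0$, then invoke the injectivity from \autoref{thm:reductiontoformal}.(1). No discrepancies.
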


\begin{proof}
First note that for every $\sigma\in G_L$ we have that $\alpha\left(\sigma(\uu)-\uu\right)=0$, i.e. $\sigma(\uu)-\uu\in T_p(\cA)$.
\autoref{thm:reductiontoformal}.(1) implies that $(\varphi_{\cA})_{|T_p({\cA})}$ is injective. Therefore if $(\varphi_\cA)(\uu)=0$, then we have $(\varphi_{\cA})_{|T_p({\cA})}(\uu-\sigma(\uu))=\varphi_\cA(\uu)-\sigma(\varphi_{\cA}(\uu))=0$, and hence $\sigma(\uu)-\uu=0$ for all $\sigma\in G_L$. Thus, for all $\sigma\in G_L$ we have $\sigma(\uu)=\uu$.
\end{proof}

\begin{proof}[Proof of \autoref{thm:reductiontoformal}.(2)]
We already know that $\per(\cB_\cA)\subset \ker(\varphi_\cA)$, so it suffices to prove the reverse inclusion. 
We break the proof up into three cases. \\

\item \textit{Case 1}.~Let $\underline{u}\in \ker(\varphi_\cA)$ such that $u_0:=\alpha(\underline{u})\in \cA(\cO)[m]$ for $m\ge 1$ with $(m,p)=1$.
Then $u_0\in \cA(\cO_K)$ and by \autoref{lemma:invariants}, we see that $\underline{u}\in (\cB_\cA)^{G_K}$. 
However, the only element $\uu\in (\cB_\cA)^{G_K}$ with $u_0\in \cA(\cO_K)[m]$ for $m$ above is the periodic sequence associated to $u_0$, and so $\uu\in \per(\cB_\cA)$.\\

\item \textit{Case 2}.~Let us now suppose that $\uu\in \ker(\varphi_\cA)$ with $u_0=\alpha(\uu)\in \cA(\cO)$ such that $u_0$ is a torsion point of $\cA$ of order a power of $p$, say $u_0\in \cA(\cO)[p^a]$ for some $a\geq 1$. 
Then $p^a(\uu)=\left([p^a]u_n\right)_{n\ge 0}\in T_p(A)$ and $\varphi_\cA(p^a(\uu))=p^a\varphi_\cA(\uu)=0$. 
As $(\varphi_\cA)_{|{T_p(A)}}$ is injective, we have that $p^a(\uu)=0$ in $\cB_\cA$. Since $\cB_\cA[p]=0$, we see that $\uu=0$.\\

\item \textit{Case 3}.~Finally, let us suppose that $\uu\in \ker(\varphi_\cA)$ with $u_0=\alpha(\uu)$ not a torsion point of $\cA(\cO)$. 
Observe that $u\otimes 1_{\Q_p}\in \cB_\cA\otimes_{\Z} \Q $ whose image via $\alpha\otimes 1_{\Q}$ in $\cA(\cO)\otimes_{\Z} \Q$ is $u_0\otimes 1\neq 0$. We have a natural exact sequence of $\Q_p$-vector spaces, with continuous action by $G_K$, namely
\begin{equation}\label{eqn:SESrational}
0\to V_p(A)\to \cB_\cA\otimes_{\Z} \Q\to \cA(\cO)\otimes_{\Z} \Q\to 0.
\end{equation}
Notice that as $T_p(A)$ is naturally a $\Z_p$-module, we have that $V_p(A):=T_p(A)\otimes_{\Z}\Q\cong T_p(A)\otimes_{\Z_p}\Q_p$.

Let us now suppose that $\uu\in \ker(\varphi_\cA)$ has the property that $u_0=\alpha(\uu)\in \cA(\cO_L)$ for some finite extension $L$
of $F$, in $\overline{K}$ and that, as stated above, $0\neq u_0\otimes 1\in \cA(\cO)\otimes_{\Z} \Q$. We denote, as usual $G_L={\rm Gal}(\overline{K}/L)$ and let $\alpha_L\colon\cB_{\cA, L}\to \cA(\cO_L)$ be the fiber product of the diagram of $G_L$-modules:
$$
\begin{array}{cccccccccc}
\cB_{\cA}&\stackrel{\alpha}{\longrightarrow}&\cA(\cO)\\
&&\cup\\
&&\cA(\cO_L)
\end{array}
$$
In other words we have a cartesian, commutative diagram of $G_L$-modules, with exact rows 
$$
\begin{array}{ccccccccccc}
0&\to& V_p(A)&\to& \cB_\cA\otimes_{\Z} \Q&\stackrel{\alpha\otimes 1}{\to}& \cA(\cO)\otimes_{\Z} \Q&\to& 0\\
&&||&&\cup&&\cup\\
 0& \to& V_p(A)&\to& \cB_{\cA,L}\otimes_{\Z} \Q&\stackrel{\alpha_L\otimes 1}{\to}& \cA(\cO_L)\otimes_{\Z} \Q&\to& 0
\end{array}
$$
It is easy to see that we have, on the one hand
$$
(\cB_\cA)^{G_L}\subset \cB_{\cA,L}\subset\cB_\cA,
$$
therefore we have $\bigl(\cB_{\cA,L}\otimes_{\Z}\Q \bigr)^{G_L}=\bigl(\cB_\cA\otimes_\Z\Q \bigr)^{G_L}$.
On the other hand the exact sequence 
\[
0\to V_p(A)\to \cB_{\cA,L}\otimes_{\Z} \Q\to \cA(\cO_L)\otimes_{\Z} \Q\to 0
\]
is an exact sequence of finite $\Q_p$-vector spaces with continuous $G_L$-action, whose long exact $G_L$-cohomology sequence reads as
\[
\left(\cB_\cA \otimes_{\Z} \Q\right)^{G_L}\stackrel{\alpha_L\otimes 1}{\longrightarrow} \left(\cA(\cO_L)\otimes_{\Z}\Q\right)\stackrel{\partial}{\longrightarrow} H^1(G_L, V_p(A)).
\]
By \cite[Example 3.11]{bloch_kato}, the map $\partial$ is injective, and therefore, we have that 
\[\alpha_L\otimes 1\left(\left(\cB_\cA\otimes_{\Z} \Q\right)^{G_L}\right) = 0.\] 
Thus, there is no $G_L$-invariant element of $\cB_\cA\otimes \Q$ which has  image under $\alpha\otimes 1$ equal to $u_0\otimes 1_{\Q}\neq 0$, and so $\uu$ cannot be $G_L$-invariant. However, we note that this contradicts \autoref{lemma:invariants}. Therefore, we can conclude and say that $\ker(\varphi_\cA)=\per(\cB_\cA)$, as desired. 
\end{proof}

\subsection{Injectivity of Fontaine integral restricted to Tate module of formal group}
\label{sec:classes}
We now present a proof of the injectivity of Fontaine's integration map, restricted to the Tate module of the formal group of the abelian variety. The outline of this proof was communicated to us by Pierre Colmez.

We briefly recall the relevant notation. 
Let $\widehat{\cA}$ be the formal group of an abelian variety $A$, with good reduction over a finite extension $F$ of $\Q_p$, in $K$, where we recall that
$K$ is the maximal unramified extension of $\Q_p$ inside some algebraic closure $\overline{\Q}_p$. 
In this subsection, we {\bf do not} assume \autoref{assumption}, i.e.~we \textbf{do not} assume that that $T_p(A)^{G_K}=0$.

Let $D:=H^1_{\rm dR}(\widehat{\cA})$ denote the Dieudonn\'e module of $\widehat{\cA}$ over $F$, which is an admissible filtered, Frobenius module over $F$. 
Moreover, $D\cong D_{\rm cris}(V)$, for $V:=V_p(\widehat{\cA})^\vee$, by which we mean the $\Q_p$-dual of the $G_F$-representation $V_p(\widehat{\cA})$. 
Let $\phi_D$ denote the linear Frobenius of $D$, i.e. ~if $[F:\Q_p]=r$ then $\phi_D$ is the linear map $(\phi^{\rm abs}_D)^r$ , where 
$\phi^{\rm abs}_D$ is the absolute Frobenius on $D$. We denote by $W\subset D$ the subspace which determines the filtration of $D$, that is:~${\rm Fil}^i(D)=D$ 
for $i\le 0$, ${\rm Fil}^1(D)=W$ and ${\rm Fil}^j(D)=0$ for $j\ge 2$. 
We note that $W$ is the subspace of invariant differentials. 

For every filtered, Frobenius submodule $D'$ of $ D$, we define the \cdef{Hodge and Newton numbers} of $D'$, $t_H(D')$, respectively $t_N(D')$ in terms of the filtration and the dimensions of its graded quotients and respectively, in terms of the normalized slopes of the Frobenius $\phi_{D'}$.
We say that $D$ is \cdef{weakly admissible} if for every $D'\subset D$ sub-filtered, Frobenius module we have $t_N(D')\ge t_H(D')$, with equality if $D'=D$. 
In particular, we recall that as $D$ is admissible, it is weakly admissible \cite{Fontaine:representationsemistable}. 

Let us consider the integration pairing as in \cite[Section 7]{Colmez1992}
\[
\varphi^{\rm cris}_A\colon T_p(A) \longrightarrow H^1_{\rm dR}(A)^\vee\otimes_F B_{\rm cris}^+,
\]
which induces on the one hand the crystalline integration on the Tate module of the formal group:
\[
\varphi^{\rm cris}_{\widehat{\cA}}\colon T_p(\widehat{\cA})\longrightarrow D^\vee\otimes_FB_{\rm cris}^+.
\]
and on the other hand Fontaine's integration on the Tate module of the formal group
\[
\varphi_{\widehat{\cA}}\colon T_p(\widehat{\cA})\longrightarrow W^\vee\otimes_F\C_p(1).
\]
where we identify $(\varphi_{\cA})_{|T_p(\widehat{\cA})} $ with $\varphi_{\widehat{\cA}}$. The main result of this subsection is the following.

\begin{theorem}\label{thm:Fontaineformalinjective}
The map $\varphi_{\widehat{\cA}}\colon T_p(\widehat{\cA})\longrightarrow W^\vee\otimes_F\C_p(1)$ is injective.
\end{theorem}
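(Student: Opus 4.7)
Following a strategy suggested by Colmez, the plan is to combine three ingredients: the identification of $\varphi_{\widehat{\cA}}\otimes 1_{\C_p}$ with the Hodge--Tate projection of $V_p(\widehat{\cA})\otimes_{\Q_p}\C_p$ onto its weight $-1$ summand $W^\vee\otimes_F\C_p(1)$; the weak admissibility of the crystalline representation $V_p(\widehat{\cA})$; and the fact that, since $\widehat{\cA}$ is the connected component of $\cA[p^\infty]$, the Newton slopes of its crystalline Dieudonn\'e module are of a definite sign.

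Suppose $x\in T_p(\widehat{\cA})\hookrightarrow V_p(\widehat{\cA})$ satisfies $\varphi_{\widehat{\cA}}(x)=0$; I aim to show $x=0$. From the Hodge--Tate decomposition
\[
V_p(\widehat{\cA})\otimes_{\Q_p}\C_p \ \cong\ \bigl(\Lie(\widehat{\cA}^\vee)^\vee\otimes_F\C_p\bigr)\ \oplus\ \bigl(\Lie(\widehat{\cA})\otimes_F\C_p(1)\bigr)
\]
and the canonical identification $W^\vee\cong \Lie(\widehat{\cA})$, the map $\varphi_{\widehat{\cA}}\otimes 1_{\C_p}$ is precisely the projection onto the second summand (consistent with the discussion of Section~\ref{subsec:comparision}). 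Hence the vanishing of $\varphi_{\widehat{\cA}}(x)$ forces $x\otimes 1$ to lie in the weight $0$ summand $\Lie(\widehat{\cA}^\vee)^\vee\otimes_F\C_p$.

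Let $U\subset V_p(\widehat{\cA})$ denote the $\Q_p[G_F]$-submodule generated by $x$. Since the weight $0$ summand is $G_F$-stable and contains $x\otimes 1$, one has $U\otimes_{\Q_p}\C_p\subset \Lie(\widehat{\cA}^\vee)^\vee\otimes_F\C_p$, so $U$ is a Hodge--Tate representation all of whose weights equal $0$. Being a subrepresentation of the crystalline representation $V_p(\widehat{\cA})$, $U$ is itself crystalline; set $D_U:=D_{\rm cris}(U)$, a sub filtered Frobenius module of $D_{\rm cris}(V_p(\widehat{\cA}))$ whose filtration is concentrated in degree $0$, so that $t_H(D_U)=0$.

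By the Colmez--Fontaine theorem, $D_{\rm cris}(V_p(\widehat{\cA}))$ is weakly admissible, and consequently $t_H(D_U)\le t_N(D_U)$, i.e.\ $t_N(D_U)\ge 0$. On the other hand, since $\widehat{\cA}$ is the connected component of $\cA[p^\infty]$ -- the slope $0$ locus being exactly the \'etale quotient, which has been split off by the connected--\'etale sequence for $p$-divisible groups -- all Newton slopes of $D_{\rm cris}(V_p(\widehat{\cA}))$ are strictly negative. By the Dieudonn\'e--Manin classification, the Frobenius slopes of any $\phi$-stable subspace form a sub-multiset of those of the ambient module; therefore the slopes of $D_U$ are also strictly negative, giving $t_N(D_U)<0$ whenever $D_U\neq 0$. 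Contradicting $t_N(D_U)\ge 0$, we conclude that $D_U=0$, whence $U=0$ and $x=0$. The principal obstacle I expect is making the identification of $\varphi_{\widehat{\cA}}\otimes 1_{\C_p}$ with the Hodge--Tate projection fully precise, i.e.\ reconciling Fontaine's original construction of $\varphi$ with the comparison isomorphism machinery of Section~\ref{subsec:comparision}; the remaining ingredients (Colmez--Fontaine, Dieudonn\'e--Manin, and the connected--\'etale slope dichotomy) are classical.
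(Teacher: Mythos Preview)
Your argument is correct and takes a genuinely different route from the paper's. The paper (following Colmez) works contravariantly through the crystalline integration: it lifts $\varphi_{\widehat{\cA}}$ to $\varphi^{\rm cris}_{\widehat{\cA}}\colon T_p(\widehat{\cA})\to D^\vee\otimes_F B_{\rm cris}^+$, shows (Lemma~\ref{lemma:zeromodfiltration}) that vanishing modulo ${\rm Fil}^2$ on $W$ already forces vanishing in $B_{\rm cris}^+$, propagates this via the Frobenius compatibility $\int_x\phi_D^n(\omega)=\varphi^n(\int_x\omega)$ to all of $H=\sum_{n\ge 0}\phi_D^{{\rm abs},n}(W)$, proves $H=D$ by a weak admissibility argument on the quotient $D/H$ (Lemma~\ref{lemma:iteratesequalDieudonne}), and then concludes via the known injectivity of $\varphi^{\rm cris}_A$. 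You instead stay at the Hodge--Tate level and work covariantly: the $G_F$-subrepresentation $U\subset V_p(\widehat{\cA})$ generated by $x$ has all Hodge--Tate weights $0$, so $t_H(D_{\rm cris}(U))=0$, while the strict negativity of all Newton slopes of $D_{\rm cris}(V_p(\widehat{\cA}))$ (dual to the strict positivity used in Lemma~\ref{lemma:iteratesequalDieudonne}) combined with weak admissibility forces $D_{\rm cris}(U)=0$. Both proofs ultimately rest on the same two inputs --- weak admissibility and the absence of slope-$0$ pieces in the connected $p$-divisible group --- but yours bypasses the lift to $B_{\rm cris}^+$ and the Frobenius-iterate computation entirely, at the cost of invoking the identification of $\varphi_{\widehat{\cA}}\otimes 1_{\C_p}$ with the Hodge--Tate projection as a black box. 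Your approach is also close in spirit to the Lam--Petrov argument in the Appendix, which treats $T_p(A)$ via Sen's theorem under Assumption~\ref{assumption}; for the formal group your slope argument replaces both Sen's theorem and that hypothesis.
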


\begin{proof}
The outline of this proof was supplied by Pierre Colmez. 

Let $x\in T_p(\widehat{\cA})$ be such that $\varphi_{\widehat{\cA}}(x)=0$. This means that for every $\omega\in W$, 
\[
\varphi_{\widehat{\cA}}(x)(\omega):=\int_x\omega \pmod {\rm Fil^2(B_{\rm cris}^+)}=0. 
\]
We claim the following:

\begin{lemma}\label{lemma:zeromodfiltration}
Let $x\in T_p(\widehat{\cA})$ and $\omega\in W$. Then, we have that $\varphi^{\rm cris}_{\widehat{\cA}}(x)(\omega)=0$ if and only if $\varphi^{\rm cris}_{\widehat{\cA}}(x)(\omega)=0 \pmod {{\rm Fil}^2(B_{\rm cris}^+)}$.
\end{lemma}

\begin{proof}
We first remark that as $B_{\rm cris}^+\subset B_{\rm dR}^+$ and the filtration of $B_{\rm cris}^+$ is the one induced by the filtration of $B_{\rm dR}^+$, it is enough to prove the lemma with $B_{\rm cris}^+$ replaced by $B_{\rm dR}^+$.

Let us fix $\omega\in W$. Then the maps $\omega^\vee\colon V_p(\widehat{\cA})\to B_{\rm dR}^+$ and $\omega^*\colon V_p(\widehat{\cA})\to B_{\rm dR}^+/I^2$ given by $\omega^\vee(x):=\varphi^{\rm cris}_A(x)(\omega)$ and $\omega^*:=\pi\circ\omega^\vee$, with $\pi\colon B_{\rm dR}^+\longrightarrow B_{\rm dR}^+/I^2$ is the natural projection, are naturally $\Q_p$-linear maps, $G_F$-equivariant. 
Here $I\subset B_{\rm dR}^+$ is the maximal ideal of $B_{\rm dR}^+$, whose powers define the filtration of this ring.
Therefore, we can see $\omega^\vee$ and $\omega^*$ as elements of the following modules:
$$
\begin{array}{ccccccccc}
\omega^\vee&\in&{\rm Hom}_{G_F}\bigl(V_p(\widehat{\cA}), B_{\rm dR}^+ \bigr)&\cong&\bigl(V\otimes_{\Q_p}B_{\rm dR}^+\bigr)^{G_F}\\
\omega^*&\in&{\rm Hom}_{G_F}\bigl(V_p(\widehat{\cA}), B_{\rm dR}^+/I^2 \bigr)&\cong&\bigl(V\otimes_{\Q_p}B_{\rm dR}^+/I^2\bigr)^{G_F}
\end{array}
$$
where we have denoted by $V:=V_p(\widehat{\cA})^\vee$. The projection map $\pi$ induces a natural map 
\[
\pi_0\colon\bigl(V\otimes B_{\rm dR}^+\bigr)^{G_F}\longrightarrow \bigl(V\otimes B_{\rm dR}^+/I^2\bigr)^{G_F}
\] 
and the lemma will follow if we show that this map is an isomorphism.

For every $n\ge 2$ consider the natural exact sequence of rings and ideals
$$
0\longrightarrow I^n/I^{n+1}\longrightarrow B_{\rm dR}^+/I^{n+1}\stackrel{\pi_n}{\longrightarrow}B_{\rm dR}^+/I^n\longrightarrow 0.
$$
We tensor this exact sequence with $V$, over $\Q_p$, and obtain the exact sequence of $G_F$-modules:
$$
0\longrightarrow V\otimes I^n/I^{n+1}\longrightarrow V\otimes B_{\rm dR}^+/I^{n+1}\stackrel{1\otimes \pi_n}{\longrightarrow} V\otimes B_{\rm dR}^+/I^n\longrightarrow 0.
$$
The important remark is that the Hodge-Tate weights of $T$ are $0$ and $-1$, therefore 
$$
V\otimes I^n/I^{n+1}\cong V\otimes \C_p(n)\cong \C_p(n)^a\oplus\C_p(n-1)^b,
$$ 
with positive $n, n-1$ and non-negative integers $a,b$. As a consequence the long exact $G_F$ continuous cohomology sequence gives the exact sequence:
$$
0\longrightarrow \bigl(V\otimes B_{\rm dR}^+/I^{n+1}\bigr)^{G_F}\stackrel{u_n}{\longrightarrow}\bigl(V\otimes B_{\rm dR}^+/I^n \bigr)^{G_F}\longrightarrow H^1(G_F, V\otimes\C_p(n))\longrightarrow \cdots
$$
By Tate's result \cite[Theorem 2]{tate:pdiv}, the morphisms $u_n$ induced by $1\otimes \pi_n$, are isomorphisms for every $n\ge 2$.

For every $k\ge 3$, let $v_k\colon \bigl(V\otimes B_{\rm dR}^+/I^{k} \bigr)^{G_F}\longrightarrow \bigl(V\otimes B_{\rm dR}^+/I^2  \bigr)^{G_F}$ the map $u_2\circ \cdots\circ u_{k-2}\circ u_{k-1}$.
Obviously, $v_k$ is an isomorphism of $K$-vector spaces and we have
\[
\bigl(V\otimes B_{\rm dR}^+  \bigr)^{G_F}\cong \bigl(V\otimes \varprojlim_{n\geq 2}B_{\rm dR}^+/I^n  \bigr)^{G_F}\cong \varprojlim_{n\geq 2}\Bigl(V\otimes B_{\rm dR}^+/I^n  \Bigr)^{G_F}\stackrel{v}{\cong} \bigl(V\otimes B_{\rm dR}^+/I^2\bigr)^{G_F},
\]
where $v$ is induced by the family $(v_n)_{n\ge 2}$ and therefore it is an isomorphism. 
\end{proof}

Now, by the basic properties of $\varphi_A^{\rm cris}$ (cf.~\cite[Proposition 3.1.(iii)]{Colmez1992}), for every $\omega\in W$ and every $n\ge 0$ we have 
\[
\varphi^{\rm cris}_{\widehat{\cA}}(x)(\phi_D^n(\omega))=\int_x\phi^n_D(\omega)=\varphi^n\Bigl(\int_x\omega\Bigr)=0,
\]
where $\varphi:=(\varphi^{\rm cris})^r$ and $\varphi^{\rm cris}$ denotes the Frobenius on $B_{\rm cris}^+.$ 
Let us denote by $H:=\sum_{n\ge 0}\phi_D^{\text{abs},n}(W)\subset D$. 
By construction, we have that $\varphi(H)=H$ and $W\subset H$.

\begin{lemma}\label{lemma:iteratesequalDieudonne}
We have $H=D$.
\end{lemma}

\begin{proof}
We first claim that $\bigl(H, (\phi_{D})_{|H}, {\rm Fil}^\bullet(H)\bigr)$ is a weakly admissible filtered, Frobenius module. It certainly is a filtered, Frobenius submodule of $D$, therefore $t_N(H)\ge t_H(H)$ , by the admissibility of $D$.  
Moreover, the Hodge polygon of $D$ consists of a horizontal segment of length $h-g$, where ${\rm dim}_F(D)=h$ and ${\rm dim}_F(W)=g$ and a segment of slope $1$ and hight $g$. As the filtration of $H$ is given by: ${\rm Fil}^i(H)=H$ for $i\le 0$,
${\rm Fil}^1(H)=W$ and ${\rm Fil}^j(H)=0$ for $j\ge 2$, the Hodge polygon of $H$ has a horizontal segment of length 
$h'-g$, $h'={\rm dim}_F(H)$ and a segment of slope $1$ and height $g$.
So $t_H(H)=t_H(D)=g$.

On the other hand, by \cite[Corollary 5.7.7 \& 5.7.8]{Katz:crystalline}, we have an exact sequence of filtered, Frobenius modules
\[
0\longrightarrow H^1_{\rm et}(\overline{\cA},\mZ_p)\longrightarrow H^1_{\rm cris}(\overline{\cA}/F)\longrightarrow D\longrightarrow 0
\]
where $\overline{\cA}$ denotes the special fiber of the Neron model of $A$.
The above exact sequence identifies the first term with the slope $0$ submodule of $H^1_{\rm cris}(\overline{\cA}/F)$ and $D$ as the slope $>0$ quotient of the same Frobenius module.
As all the slopes of $\phi_D$ on $D$ (so on $H$ as well) are $>0$, we have that 
$t_N(H)\le t_N(D)=t_H(D)=g$. Therefore we get that $t_N(H)=t_H(H)=g$, i.e. $H$ is a indeed a weakly admissible filtered, Frobenius submodule of $D$. Therefore, the quotient $D/H$ (in the category of filtered, Frobenius modules) is weakly admissible.
But $D/H$ has positive slopes of Frobenius and its filtration is: ${\rm Fil}^i(D/H)=D/H$ for $i\le 0$ and ${\rm Fil}^j(D/H)=0$ 
for all $j\ge 1$. So the Hodge polygon of $H$ has only a horizontal segment and as the slopes of Frobenius on $D/H$ are positive, if it is non-zero, it cannot be weakly admissible. So $D/H=0$ i.e. ~$D=H$. 
\end{proof}
As a consequence of \autoref{lemma:zeromodfiltration} and \autoref{lemma:iteratesequalDieudonne}, $\varphi^{\rm cris}_{\widehat{\cA}}(x)(\omega)=0$ for all $\omega\in D$, i.e. , $\varphi^{\rm cris}_{\widehat{\cA}}(x)=0$.
Since $\varphi^{\rm cris}_{\widehat{\cA}}=(\varphi^{\rm cris}_{A})_{|T_p(\widehat{\cA})}$ and $\varphi^{\rm cris}_A$ is injective by \cite[Th\'eor\`eme 5.2.(ii)]{Colmez1992}, we have that $x=0$, and hence our desired result. 
\end{proof}

\subsection{A first application of \autoref{conj:periodic}.}

We keep all the notations from the previous sections and Subsection \ref{subsec:conventions}, i.e.
let $A$ be an abelian variety over $F$ satisfying the assumptions there. Let $K\subset L\subset \Kbar$ be such that $L$ is a finite extension of $K$ and denote $G_L$ the absolute Galois group of $L$.

\begin{lemma}
We have that $\bigl(T_p(A)\bigr)^{G_L}=0$ and ${\rm per}(\cB_A)=(\cB_A)^{G_L}$.
\end{lemma}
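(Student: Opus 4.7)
The plan is to deduce both statements as formal consequences of \autoref{conj:periodic} combined with the vanishing of $G_L$-invariants on the target of the Fontaine integral. More precisely, I first establish
\[
\bigl(\Lie(\cA)(\cO_F) \otimes_{\cO_F} \C_p(1)\bigr)^{G_L} = 0,
\]
and then exploit the $G_L$-equivariance of $\varphi_\cA$ to conclude.

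For the target vanishing, $G_L$ fixes $F$ pointwise (since $F \subset K \subset L$), so $\Lie(\cA)(\cO_F)$ carries trivial $G_L$-action, reducing the vanishing to $\C_p(1)^{G_L}=0$. Because $L$ is a finite extension of $K=\Q_p^{\rm ur}$, and any extension of $K$ obtained by adjoining a nontrivial $p$-power root of unity is totally ramified of degree tending to infinity with $n$ for $\mu_{p^n}$, $L$ contains only finitely many $p$-power roots of unity. Hence $\chi_{\rm cyc}|_{G_L}$ has open (in particular nontrivial) image in $\Z_p^\times$, and the Tate--Sen vanishing yields $\C_p(1)^{G_L}=0$.

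Given this, both assertions follow directly. For the first, if $x \in T_p(A)^{G_L}$ then $\varphi_\cA(x)$ lies in the (vanishing) $G_L$-invariants of the target and is therefore zero; by \autoref{thm:reductiontoformal}(1), which asserts that $(\varphi_\cA)_{|T_p(A)}$ is injective, this forces $x=0$. For the second, the inclusion $\per(\cB_\cA) \subset (\cB_\cA)^{G_K} \subset (\cB_\cA)^{G_L}$ is immediate from \autoref{lemma:periodicprimetop}(1) and $G_L \subset G_K$; conversely, any $\uu \in (\cB_\cA)^{G_L}$ satisfies $\varphi_\cA(\uu)=0$ by the same target vanishing, so $\uu \in \ker(\varphi_\cA) = \per(\cB_\cA)$ by \autoref{conj:periodic}.

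The proof is entirely formal once \autoref{conj:periodic} and the Tate--Sen vanishing $\C_p(1)^{G_L}=0$ are in hand; no genuine obstacle arises, and the lemma is essentially a packaging of the main theorem together with a standard invariant computation.
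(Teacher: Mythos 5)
Your proof is correct and takes essentially the same approach as the paper: both rest on the Tate--Sen vanishing $\bigl(\Lie(A)\otimes\C_p(1)\bigr)^{G_L}=0$ and the equality $\ker(\varphi_\cA)=\per(\cB_\cA)$ from \autoref{conj:periodic}. The paper just phrases it more compactly as taking $G_L$-invariants of the commutative diagram with exact rows; your element-wise argument (and the extra justification of $\C_p(1)^{G_L}=0$ via openness of the cyclotomic image) is the same reasoning spelled out.
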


\begin{proof}
\autoref{conj:periodic} implies that we have the following commutative diagram with exact rows:
$$
\begin{array}{ccccccccc}
&&0&\longrightarrow&T_p(A)&\stackrel{\varphi_\cA}{\longrightarrow}&{\rm Lie}(A)\otimes\C_p(1)\\
&&\cap&&\cap&&||\\
0&\longrightarrow&{\rm per}(\cB_A)&\longrightarrow&\cB_\cA&\stackrel{\varphi_\cA}{\longrightarrow}&{\rm Lie}(A)\otimes\C_p(1)
\end{array}
$$
Now we take $G_L$-invariants of this diagram and use the fact that
$\left({\rm Lie}(A)\otimes\C_p(1)\right)^{G_L}=0$ and $({\rm per}(\cB_A))^{G_L}={\rm per}(\cB_A)$.
\end{proof}

Of course, it follows, either from the lemma, or using the diagram in the proof of the lemma, that for every finite extension $M$ of $F$ in $\Kbar$, we have $(T_p(A))^{G_M}=0$ and $(\cB_A)^{G_M}=\bigl({\rm per}(\cB_A)\bigr)^{G_M}.$
 
  \section{\bf A $p$-adic uniformization result for abelian varieties with good reduction} 
 \label{sec:uniformization}
In this section, we prove our \autoref{thm:main2} which states that the $p$-adic points of an abelian variety $A$ over $F$ with good reduction which satisfies \autoref{assumption}, admit a certain type of $p$-adic uniformization, which has not been observed before and strongly resembles the classical complex uniformization.

We recall from Section \ref{sec:intro} that there is a certain subgroup $A^{(p)}(\Kbar)$ of $A(\Kbar)$ which we are interested in studying. 
We start by recalling the notation $A_{\rm tor}(\Kbar), A_{p{\text{-tors}}}(\Kbar), A_{p'{\text{-tors}}}(\Kbar)$ which denotes the torsion subgroup, the $p$-power torsion subgroup, and the prime-to-$p$ power torsion subgroup of the group $A(\Kbar)$, respectively.
 
We consider the $w$-topology on $A(\Kbar)$, it induces the discrete topology on $A_{p'{\text{-tors}}}(\Kbar)$, therefore this subgroup is closed in $A^w(\Kbar)$. Similarly, the $w$-topology on $\cB_\cA$ induces the discrete topology on ${\rm per}(\cB_\cA)$, and so this subgroup is closed in $\cB_\cA$.

\begin{definition}\label{defn:Apwithw}
We define the topological abelian groups 
\[
A^{(p),w}:=A^w(\Kbar)/A_{p'{\text{-tors}}}(\Kbar)\ \mbox{ and } \ \cB_\cA^{(p)}:=\cB_\cA/{\rm per}(\cB_\cA),
\]
on which we consider the respective quotient topologies.
\end{definition}

\begin{remark}
Moreover, we have natural isomorphisms of groups:
$$A^{(p),w}\otimes_\Z\Q\cong A(\Kbar)\otimes_\Z\Q\cong A(\Kbar)/A_{\rm tor}(\Kbar)\ \quad \mbox{ and } \quad \cB_\cA^{(p)}\cong \cB_\cA\otimes_\Z\Q. 
$$ 
As before the $w$-topology of $A(\Kbar)$ induces the discrete topology on $A_{\rm tor}(\Kbar)$ (\autoref{lemma:winducesdiscretetorsion}), therefore this subgroup is closed in $A^w(\Kbar)$. We define 
on $A^{(p),w}\otimes_Z\Q$ the topology induced by the isomorphisms above, with the quotient topology on $A^w(\Kbar)/A_{\rm tor}(\Kbar)$.
 \end{remark}

We recall from Section \ref{sec:Felliptic} that we have an exact sequence of $G_F$-modules
\begin{equation}\label{eqn:pointSES}
0\to T_p(A)\to \cB_\cA\to A(\Kbar)\to 0. 
\end{equation}
 
We consider the following commutative diagram, with exact rows and columns
$$
\begin{array}{ccccccccccc}
&&&&{\rm per}(\cB_\cA)&=&A_{p'{\text{-tors}}}(\Kbar)\\
&&&&\cap&&\cap\\
0&\longrightarrow&T_p(A)&\longrightarrow&\cB_\cA&\longrightarrow&A(\Kbar)&\longrightarrow&0\\
&&||&&\downarrow&&\downarrow\\
0&\longrightarrow&T_p(A)&\longrightarrow&\cB^{(p)}_\cA&\longrightarrow&A^{(p),w}(\Kbar)&\longrightarrow&0
\end{array}
$$
We mentioned above that we can see $A^{(p)}$ also as a subgroup of $A(\Kbar)$, namely there is a natural section $s\colon A(\mC_p)\to A_{p'{\text{-tors}}}(\Kbar)$ of the natural inclusion $A_{p'{\text{-tors}}}(\Kbar)\subset A(\mC_p)$ defined as follows (see \cite{fontaine_presque}). The exponential is defined locally, i.e. there is an $\cO_F$-lattice $\Lambda$ in $\Lie(\cA)(\cO_F)$ and a continuous map 
\[
\exp_\Lambda\colon \cO_{\C_p}\otimes_{\cO_F}\Lambda\to A(\C_p),
\]
such that $\log_A(\exp_\Lambda(x))=x$ for $x\in \cO_{\C_p}\otimes_{\cO_F}\Lambda$. For $x\in A(\C_p)$, for $n\gg 0$ we have $p^n\log_A(x)\in \cO_{\C_p}\otimes_{\cO_F}\Lambda$ and 
\[
s(x):=p^{-n}\pi\left(\frac{x^{p^n}}{\exp_\Lambda\left(p^n\log_A(x)\right)}\right),
\]
where $\pi\colon A_{\rm tor}(\Kbar)\to A_{p'{\text{-tors}}}(\Kbar)$ is the canonical projection. Above, we denoted (following \cite{fontaine_presque}) $A(\C_p)$ multiplicatively and $A_{p'{\text{-tors}}}(\Kbar)$ additively. Then, $s$ is continuous and $G_K$-equivariant. 
Alternatively,  one can construct the section as follows (we owe this construction to the referee). Write an element $x$ of $A^{\text{Fo}}(\mC_p)$, where $A^{\text{Fo}}(\mC_p)$ is $A(\mC_p)$ is endowed with the natural $p$-adic topology,  in terms of its topologically $p$-power nilpotent part times a prime-to-$p$ torsion element. The section $s$ is given as the limit $\lim_n[p^{n!}]x$ as this limit kills the topologically $p$-power nilpotent part and acts as the identity on the prime-to-$p$ torsion part. We note that the same construction with $A^{\text{Fo}}(\mC_p)$ replaced with $A^w(\Kbar)$ will work even if $A^w(\Kbar)$ is not complete. 

 Let $s\colon A(\C_p)\to A_{p'{\text{-tors}}}(\Kbar)$ denote the section to the inclusion $A_{p'{\text{-tors}}}(\Kbar) \subset A(\C_p)$ defined above. 
We denote by $A^{(p)}(\C_p):=\ker(s)$ and $A^{(p)}(\Kbar):=A(\Kbar)\cap A^{(p)}(\C_p)$.  
We remark that we have natural isomorphisms of groups
 $A^{(p)}(\Kbar) \cong A(\Kbar)/A_{p'{\text{-tors}}}(\Kbar)$,  and therefore we could have defined $A^{(p)}(\Kbar)$ as the subgroup of $A(\Kbar)$ given by $A(\Kbar)\cap \ker(s)$. 
 
We now continue our investigation of the structure of the topological group $A^{(p),w}(\Kbar)$; we will abuse notation and denote by 
\[
\varphi_\cA\colon \cB_\cA^{(p)}\to {\rm Lie}(\cA)(\cO_F)\otimes_{\cO_F}\C_p(1)
\] 
the restriction of $\varphi_\cA$ to $\cB^{(p)}_\cA$. Given that $A$ satisfies \autoref{assumption}, \autoref{conj:periodic1} implies that $\varphi_\cA$ is injective.

We then have a natural commutative diagram of $\Z_p$- and $G_F$-modules 
\[
\begin{tikzcd}[row sep = 1.2em]
0 \arrow{r} & T_p(A) \arrow{r}\arrow[equal]{d} & \cB_{\cA}^{(p)} \arrow{r} \arrow[right hook->]{d}{\varphi_{\cA}} & A^{(p),w}(\Kbar)\arrow{r} \arrow[right hook->]{d}& 0 \\
0 \arrow{r} & \varphi_{\cA}(T_p(A)) \arrow{r} & {\rm Lie}(\cA)(\cO_F)\otimes_{\cO_F}\C_p(1)\arrow{r} & \frac{\left(\Lie(\cA)(\cO_F)\otimes_{\cO_F}\C_p(1)\right)}{\varphi_{\cA}(T_p(A))} \arrow{r} & 0
\end{tikzcd}
\]
where above and throughout this section, equality signs in diagrams will denote isomorphisms. 
We let
\[
\iota_A\colon A^{(p),w}(\Kbar)\hookrightarrow ({\rm Lie}(\cA)(\cO_F)\otimes_{\cO_F}\C_p(1))/\varphi_\cA(T_p(A))
\]
denote the injective, $G_F$-equivariant, $\Z_p$-linear homomorphism induced by the diagram and the fact that 
$(\varphi_\cA)$ is injective. We observe that as $\varphi_\cA$ is continuous with respect to the $w$-topology on $\cB_A^{(p)}$, $\iota_A$ is continuous with respect to the $w$-quotient topology on $A^{(p),w}(\Kbar)$ and the quotient topology on ${\rm Lie}(A)\otimes\C_p(1)/\varphi_A(T_p(A))$. Notice that as 
$T_p(A)$ is compact, $\varphi_A(T_p(A))$ is a closed subgroup of ${\rm Lie}(A)\otimes\C_p(1)$, so the quotient topology makes sense.

Next, we will describe the image $\iota_A$ and then show that the triple 
\[
\left(T_p(A), \Lie(\cA)(\cO_F)\otimes_{\cO_F}\C_p(1), \varphi_\cA\colon T_p(A)\hookrightarrow {\rm Lie}(\cA)(\cO_F)\otimes_{\cO_F}\C_p(1)\right)
\] 
determines the group $A^{(p),w}(\Kbar)$. 
This latter result bears some resemblance to \cite[Theorem B]{scholze_weinstein}. 
Recall the definition of crystalline elements from \autoref{defn:cyrstalline}.

\begin{theorem}
\label{prop:imageiota}
Recall the injective, continuous, $G_F$-equivariant, $\Z_p$-linear homomorphism
\[
\iota_A\colon A^{(p),w}(\Kbar)\hookrightarrow ({\rm Lie}(\cA)(\cO_F)\otimes_{\cO_F}\C_p(1))/\varphi_\cA(T_p(A)).
\]
The image of $\iota_A$ corresponds to the crystalline elements of $ \left(\Lie(\cA)(\cO_F)\otimes_{\cO_F}\C_p(1)\right)/\varphi_{\cA}(T_p(A))$. 
\end{theorem}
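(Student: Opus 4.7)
The plan is to adapt the argument used for $\mG_m$ and $G$ in Section \ref{sec:uniformizationmultiplicativegroup}: identify the image of $\iota_A$ via a Kummer-type connecting map in Galois cohomology, and then invoke Bloch--Kato to cut out this image as the crystalline subspace. Every element in the image of $\iota_A$ is algebraic in the sense of \autoref{defn:cyrstalline}, since a representative in $A(\Kbar)$ is defined over some finite extension $L$ of $F$ in $\Kbar$. The statement therefore reduces to proving, for each such $L$ and each $G_L$-invariant $x$ in the target, that $x \in \iota_A(A^{(p),w}(\Kbar))$ if and only if $x$ is crystalline. The torsion case is immediate: the torsion subgroup of the target is the image of $V_p(A)/T_p(A) \cong A[p^\infty](\Kbar)$, and such elements are $\iota_A$-images of $p$-power torsion points, which are crystalline by \autoref{defn:cyrstalline}(1)(a).

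Assume henceforth that $x$ is non-torsion. Tensor the defining commutative diagram of $\iota_A$ with $\mQ$ over $\mZ$ and take $G_L$-continuous cohomology. Using Tate's vanishing $\bigl(\Lie(\cA)(\cO_F)\otimes_{\cO_F}\C_p(1)\bigr)^{G_L} = 0$ \cite[Theorem 2]{tate:pdiv}, together with the natural identification $A^{(p),w}(\Kbar)^{G_L}\otimes_\mZ \mQ \cong A(L)\otimes_\mZ \mQ$ (which follows from $A_{p'{\text{-tors}}}(\Kbar)^{G_L}$ being torsion and from $V_p(A)^{G_L}=0$ by Subsection \ref{sec:classes}), we obtain the commutative square of injective connecting maps
\[
\begin{tikzcd}[column sep=.7cm, row sep=.5cm]
A(L)\otimes_\mZ \mQ \arrow[right hook->]{r}{\partial_A} \arrow[right hook->]{d}{\iota_A\otimes\mQ} & H^1(G_L, V_p(A)) \arrow[equal]{d} \\
\Bigl(\bigl(\Lie(\cA)(F)\otimes_F\C_p(1)\bigr)/\varphi_\cA(V_p(A))\Bigr)^{G_L} \arrow[right hook->]{r}{\partial'} & H^1(G_L, V_p(A)).
\end{tikzcd}
\]
By Bloch--Kato \cite[\S 3]{bloch_kato}, since $A$ has good reduction over $L$, the image of $\partial_A$ is exactly $H^1_f(G_L, V_p(A))$.

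On the other hand, \autoref{rem:crystalline} identifies $\partial'(x)$ with the class in $H^1(G_L, V_p(A))$ of the extension $0 \to V_p(A) \to \alpha^{-1}(x\mZ_p)\otimes_{\mZ_p}\mQ_p \to x\mQ_p \to 0$ and states that this class lies in $H^1_f(G_L, V_p(A))$ exactly when $\alpha^{-1}(x\mZ_p)\otimes_{\mZ_p}\mQ_p$ is crystalline as a $G_L$-representation, i.e.~exactly when $x$ is crystalline. Chasing the square, $x$ lies in the image of $\iota_A\otimes\mQ$ if and only if $\partial'(x) \in H^1_f(G_L, V_p(A))$ if and only if $x$ is crystalline; combined with the torsion case this yields the theorem. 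The main obstacle is the careful identification of $\partial'(x)$ with the Yoneda class of the pullback extension, which is the bridge between the definition of crystalline in \autoref{defn:cyrstalline} and the $H^1_f$ picture of Bloch--Kato; this is what \autoref{rem:crystalline} sketches, but one must verify that all identifications respect both the $G_L$-action and the filtration on $B_{\rm cris}^+$.
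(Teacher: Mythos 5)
Your proposal reproduces the paper's argument essentially step for step: both handle the torsion subgroup directly, reduce the non-torsion case to a finite extension $L$, take long exact continuous $G_L$-cohomology after tensoring with $\Q$, apply Tate's vanishing \cite[Theorem 2]{tate:pdiv} together with Bloch--Kato's identification of the image of the Kummer connecting map with $H^1_f(L, V_p(A))$, and finish via \autoref{rem:crystalline}. The one technical step the paper makes explicit that you leave implicit is justifying the passage to the long exact sequence in \emph{continuous} $G_L$-cohomology: this requires continuous set-theoretic splittings of the relevant short exact sequences, which the paper supplies by observing that the top row consists of finite-dimensional $\Q_p$-spaces with their $p$-adic topology and that the middle row is a sequence of orthonormalizable $\Q_p$-Banach spaces for which one can choose an orthonormal basis of $\Lie(\cA)(\cO_F)\otimes_{\cO_F}\C_p(1)$ extending a $\Q_p$-basis of $V_p(A)$.
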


\begin{proof}
As $\C_p$ is a field, $\C_p(1)$ and $\left(\Lie(\cA)(\cO_F)\otimes_{\cO_F}\C_p(1)\right)$ are divisible groups.
Let us first determine the torsion of $\left(\Lie(\cA)(\cO_F)\otimes_{\cO_F}\C_p(1)\right)/\varphi_\cA(T_p(A))$.
We have the following isomorphisms of $\Z_p$ and $G_F$-modules:
\begin{align*}
\left(\left(\Lie(\cA)(\cO_F)\otimes_{\cO_F}\C_p(1)\right)/\varphi_\cA(T_p(A))\right)[p^n] &\cong  \left(\frac{1}{p^n}\varphi_\cA(T_p(A))\right)/\varphi_\cA(T_p(A)) \\
&\cong \varphi_\cA\left(\left(\frac{1}{p^n}T_p(A)\right)/T_p(A)\right) \\
& \cong\iota_A\left(A(\Kbar)[p^n] \right).
\end{align*}
This implies that
\[
\left(\left(\Lie(\cA)(\cO_F)\otimes_{\cO_F}\C_p(1)\right)/\varphi_{\cA}(T_p(A))\right)[p^{\infty}]\cong (\varphi_\cA \otimes 1_{\Q_p})(V_p(A))/\varphi_\cA(T_p(A))\cong \iota_A\left(A(\Kbar)[p^\infty]\right),
\] 
where we recall that $V_p(A)=T_p(A)\otimes_{\Z}\Q\subset \cB_\cA^{(p)} \otimes_{\Z} \Q$. 
Moreover, $\iota_A$ induces a $G_F$-isomorphism between the $p$-power torsion of the two modules. 
From this, we deduce that every torsion point of $\left(\Lie(\cA)(\cO_F)\otimes_{\cO_F}\C_p(1)\right)/\varphi_{\cA}(T_p(A))$ is algebraic (and, by definition crystalline).

We now compare the non-torsion points. 
Let $x\in \left(\Lie(\cA)(\cO_F)\otimes_{\cO_F}\C_p(1)\right)/\varphi_\cA(T_p(A))$ be a non-torsion point. 
We consider its image, which we denote also by $x$, in 
\[
\faktor{\left(\left(\Lie(\cA)(\cO_F)\otimes_{\cO_F}\C_p(1)\right)/\varphi_\cA(T_p(A))\right)}{\left(\left(\Lie(\cA)(\cO_F)\otimes_{\cO_F}\C_p(1)\right)/\varphi_\cA(T_p(A))\right)[p^\infty]} 
\]
which is isomorphic to $\left(\Lie(\cA)(\cO_F)\otimes_{\cO_F}\C_p(1)\right)/(\varphi_\cA \otimes 1_{\Q_p})(V_p(A))$. 
Recall that we have the diagram:
\[
\begin{tikzcd}[row sep = 1.2em]
0 \arrow{r} & V_p(A) \arrow{r} \arrow[equal]{d}{(\varphi_\cA \otimes 1_{\Q_p})} & \cB^{(p)}_\cA\otimes_{\Z} \Q \arrow{r} \arrow[right hook->]{d}{(\varphi_\cA \otimes 1_{\Q_p})} &A^{(p),w}(\Kbar)\otimes_{\Z} \Q \arrow{r}\arrow[right hook->]{d}{\iota_A} &0\\
0 \arrow{r} & (\varphi_\cA \otimes 1_{\Q_p})(V_p(A)) \arrow{r}\arrow[equal]{d}  & \Lie(\cA)(\cO_F) \otimes_{\cO_F} \C_p(1) \arrow{r} &\frac{\left(\Lie(\cA)(\cO_F)\otimes_{\cO_F}\C_p(1)\right)}{(\varphi_\cA \otimes 1_{\Q_p})(V_p(A))} \arrow{r} &0\\
0 \arrow{r} & (\varphi_\cA \otimes 1_{\Q_p})(V_p(A)) \arrow{r} & \alpha^{-1}(x\Q_p) \arrow{r}\arrow[right hook->]{u} & x\Q_p \arrow{r} \arrow[right hook->]{u}&0. 
\end{tikzcd}
\]

Let $L$ be a finite extension of $F$ contained in $\Kbar$ such that $x$ is $G_L$-invariant, and consider the inclusion $$A^{(p),w}(L)\otimes_\Z\Q:=\bigl(A^{(p),w}(\Kbar) \bigr)^{G_L}\otimes_\Z\Q= A(\Kbar)^{G_L}\otimes_\Z\Q=A(L)\otimes_\Z\Q\subset A(\Kbar)\otimes_\Z\Q=A^{(p),w}(\Kbar)\otimes_\Z\Q.$$
It induces, by pull-back of the first row, another commutative diagram with exact rows:
\[
\begin{tikzcd}[row sep = 1.2em]
0 \arrow{r} & V_p(A) \arrow{r} \arrow[equal]{d}{(\varphi_\cA \otimes 1_{\Q_p})} & \cB^{(p)}_\cA(L)\otimes_{\Z} \Q \arrow{r} \arrow[right hook->]{d}{(\varphi_\cA \otimes 1_{\Q})} &A^{(p),w}(L)\otimes_{\Z} \Q \arrow{r}\arrow[right hook->]{d}{\iota_A} &0\\
0 \arrow{r} & (\varphi_\cA \otimes 1_{\Q_p})(V_p(A)) \arrow{r}\arrow[equal]{d}  & \Lie(\cA)(\cO_K) \otimes_{\cO_K} \C_p(1) \arrow{r} &\frac{\left(\Lie(\cA)(\cO_K)\otimes_{\cO_K}\C_p(1)\right)}{(\varphi_\cA \otimes 1_{\Q_p})(V_p(A))} \arrow{r} &0\\
0 \arrow{r} & (\varphi_\cA \otimes 1_{\Q_p})(V_p(A)) \arrow{r} & \alpha^{-1}(x\Q_p) \arrow{r}\arrow[right hook->]{u} & x\Q_p \arrow{r} \arrow[right hook->]{u}&0. 
\end{tikzcd}
\]

We wish to
consider the long exact, {\bf continuous} $G_L$-cohomology diagram.
The first row is an exact row of finite dimensional $\Q_p$-vector spaces and the $w$-topologies induced by $\cB_A^{(p)}\otimes_{\Z}\Q$ and respectively $A^{(p)}(\Kbar)\otimes_\Z\Q$ on the middle and third terms respectively are the natural $p$-adic topologies. Therefore this row has a natural, continuous splitting as $\Q_p$-vector spaces.

The second row is an exact sequence of $\Q_p$-Banach spaces, and so these Banach spaces are orthonormalizable. By choosing an orthonormal basis of $\Lie(\cA)(\cO_F)\otimes_{\cO_F}\C_p(1)$, indexed by a totally ordered set, such that the first $2g$ basis elements are the images of a $\Q_p$-basis of $V_p(A)$, we obtain a continuous splitting as $\Q_p$-Banach spaces of the second row.

Therefore we consider the long exact, continuous $G_L$-cohomology of the diagram
\[
\begin{tikzcd}[row sep = 1.2em]
0 \arrow{r} & A(L)\otimes_{\Z}\Q \arrow{r}{\partial} \arrow[right hook->]{d}{\iota_{A}} & H^1(L, V_p(A) \arrow{r} \arrow[equal]{d}& {} \\
0 \arrow{r} & \left(\frac{\left(\Lie(\cA)(\cO_F)\otimes_{\cO_F}\C_p(1)\right)}{(\varphi_\cA \otimes 1_{\Q_p})(V_p(A))}\right)^{G_L} \arrow[equal]{r} & H^1\left(L, (\varphi_{\cA}\otimes 1_{\Q_p})(V_p(A)\right)  \arrow{r} & 0 \\
0 \arrow{r} & x\Q_p \arrow{r}{\gamma} \arrow[right hook->]{u} & H^1\left(L, (\varphi_{\cA}\otimes 1_{\Q_p})(V_p(A)\right) \arrow{r}\arrow[equal]{u} &{}
\end{tikzcd} 
\]
where the middle isomorphism follows from \cite[Theorem 2]{tate:pdiv}. 
As we already mentioned, \cite[Example 3.11 \& Proposition 5.4]{bloch_kato} implies that $\partial\left(A(L)\otimes_{\Z}\Q\right)=H^1_f(L, V_p(A))$, and therefore $x\in \iota_A\left(A(L)\otimes_{\Z}\Q\right)$ if an only if $\gamma(x)\in H^1_f(L, V_p(A))$. By \autoref{rem:crystalline}, this is equivalent to $\alpha^{-1}(x\Q_p)$ being a crystalline $G_L$-representation, and hence $x$ being crystalline. 
\end{proof}

\begin{remark}
It follows from the proof of \autoref{prop:imageiota} that, for every finite extension $L$ of $F$, we have natural isomorphisms: 
\[
\beta_L\colon \left(\left(\Lie(\cA)(\cO_F)\otimes_{\cO_F}\C_p(1)\right)/\varphi_{\cA}(T_p(A))\right)^{G_L}\cong H^1\left(L, \varphi_{\cA}(T_p(A))\right),
\]
which are compatible with restrictions from $L'$ to $L$, if $L\subset L'$.
\end{remark}

\appendix
\section{\bf{Another proof of the injectivity of the Fontaine integral}}
\label{appendix}
Let, as before, $F$ be a finite, unramified extension of $\Q_p$, 
$K$ the maximal unramified extension of $F$ in $\Kbar$ and $G_F, G_K$ the absolute Galois groups of $F$ and respectively $K$. Let 
$A$ be an abelian variety with good reduction over $F$, $\cA$ its N\'eron model over ${\rm Spec}(\cO_F)$ and $T_p(A)$ its $p$-adic Tate module. In this Appendix we present two proofs of the following theorem. 

\begin{theorem}\label{thm:Fontaineunramified}
If $A$ satisfies the property $T_p(A)^{G_K}=0$, then the Fontaine integral $\varphi_A\colon T_p(A)\longrightarrow {\rm Lie}(A)(K)\otimes_K\C_p(1)$ is injective.
\end{theorem}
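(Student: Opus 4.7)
My plan is to prove injectivity by leveraging the Hodge--Tate decomposition of $V_p(A)$ together with the classical principle that a crystalline $G_F$-representation concentrated in Hodge--Tate weight zero is necessarily unramified. Set $M := \ker(\varphi_A) \subseteq T_p(A)$; since $\varphi_A$ is $G_F$-equivariant, $M$ is a $G_F$-stable $\Z_p$-submodule, and the aim is to show $M = 0$.

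By Subsection~\ref{subsec:comparision}, the $\C_p$-linear extension $\varphi_A \otimes 1_{\C_p} \colon T_p(A) \otimes_{\Z_p} \C_p \to \Lie(A)(F) \otimes_F \C_p(1)$ is identified, via the Hodge--Tate decomposition
\[
T_p(A) \otimes_{\Z_p} \C_p \;\cong\; \bigl(H^0(A, \Omega^1_{A/F}) \otimes_F \C_p\bigr) \oplus \bigl(\Lie(A)(F) \otimes_F \C_p(1)\bigr),
\]
with the projection onto the second summand; its kernel is therefore the first summand, which is pure of Hodge--Tate weight zero. Since $M \otimes_{\Z_p} \C_p$ is contained in $\ker(\varphi_A \otimes 1_{\C_p})$, the crystalline sub-$G_F$-representation $V' := M \otimes_{\Z_p} \Q_p \subseteq V_p(A)$ has all Hodge--Tate weights equal to $0$.

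Now I invoke the standard fact that a crystalline $\Q_p$-representation of $G_F$ whose Hodge--Tate weights are all zero is unramified, i.e.~the inertia subgroup $G_K \subseteq G_F$ acts trivially. (In terms of Fontaine's equivalence of categories, such representations correspond to $\varphi$-modules $(D, \varphi)$ over $F$ with trivial filtration, and these classify unramified Galois representations.) Consequently $G_K$ acts trivially on $V'$, hence on $M$, which yields $M = M^{G_K} \subseteq T_p(A)^{G_K} = 0$, proving injectivity.

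The substance of the argument lies in two inputs: the identification of $\varphi_A \otimes 1_{\C_p}$ with the Hodge--Tate projection, and the principle ``crystalline plus vanishing Hodge--Tate weights implies unramified.'' The main obstacle is the second: it is what translates the analytic-looking hypothesis $\varphi_A(x) = 0$ into the arithmetic conclusion $x \in T_p(A)^{G_K}$, thereby letting the Galois-cohomological assumption $T_p(A)^{G_K} = 0$ deliver injectivity directly; once this bridge is in place, the rest of the proof is essentially formal.
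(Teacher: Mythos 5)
Your proof is correct and follows the same overall strategy as the paper's appendix proof by Lam and Petrov: the kernel of $\varphi_A$ defines a crystalline sub-$G_F$-representation of $V_p(A)$ whose Hodge--Tate weights are all $0$, hence it is unramified, hence it lands inside $T_p(A)^{G_K}=0$. The difference is one of packaging. You invoke the standard fact ``crystalline with all Hodge--Tate weights zero implies unramified'' directly over $G_F$, while the appendix first passes to the completion of $K$ via a Krasner-type argument and then reproves exactly this instance of the fact from scratch, combining Sen's finiteness theorem with a computation of $D_{\rm cris}(S)$ over a finite totally ramified extension $L$; your route avoids both the completion step and the explicit re-derivation. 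One small slip in your parenthetical: a trivial filtration alone does not identify the $\varphi$-module with an unramified one; it is weak admissibility \emph{together with} triviality of the filtration that forces the $\varphi$-module to be \'etale (unit-root), and it is \'etale $\varphi$-modules that classify unramified representations --- so the fact you cite is correct, but the stated justification omits the weak-admissibility step. Finally, note that the main body of the paper (\autoref{thm:Fontaineformalinjective}) gives a second, genuinely different proof, reducing to the Tate module of the formal group and using Colmez's crystalline integration pairing together with a weak admissibility argument for the Dieudonn\'e module.
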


As $K$ is a discretely valued field, which is not complete, we have to first complete it, say we denote $M$ the completion of $K$ seen as a subfield of $\C_p$ (which denotes, as before, the completion of $\Kbar$). Let $\barM$ denote the algebraic closure of $M$ in $\C_p$, then $\barM$ is an algebraically closed field containing $\Kbar$. Moreover, a simple application of Krasner's lemma (see also \cite{IovitaZaharescu:CompletionsRAT}) implies that $M\cap \Kbar=K$.
Therefore, if we denote $G_M:={\rm Gal}(\barM/M)$, we have a natural, continuous group homomorphism induced by restriction:~$\gamma\colon G_M\to G_K$.

\begin{lemma}\label{lemma:gammaisom}
$\gamma$ is an isomorphism.
\end{lemma}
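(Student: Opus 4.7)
The plan is to identify $\bar{M}$ with the compositum $M \cdot \bar{K}$ inside $\C_p$ and then derive the conclusion from the standard Galois theory of compositums combined with the already-recorded equality $M \cap \bar{K} = K$. Once this identification is in hand, both injectivity and surjectivity of the restriction map $\gamma$ follow formally.

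First I would establish the identification $\bar{M} = M \cdot \bar{K}$. The inclusion $M \cdot \bar{K} \subseteq \bar{M}$ is immediate since every element of $\bar{K}$ is algebraic over $K \subset M$, hence over $M$, and $\bar{M}$ is algebraically closed. For the reverse inclusion, take $x \in \bar{M}$ with minimal polynomial $p(T) = T^n + a_{n-1} T^{n-1} + \cdots + a_0 \in M[T]$. Since $K$ is dense in $M$, I can approximate each $a_i$ by an element $b_i \in K$ so that the perturbed polynomial $q(T) = T^n + b_{n-1} T^{n-1} + \cdots + b_0 \in K[T]$ has a root $y \in \bar{K}$ arbitrarily close to $x$ (by continuity of roots for monic polynomials). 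Krasner's lemma then gives $M(x) \subseteq M(y) \subseteq M \cdot \bar{K}$, so $x \in M \cdot \bar{K}$.

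With $\bar{M} = M \cdot \bar{K}$ secured, injectivity of $\gamma$ is immediate: if $\sigma \in G_M$ restricts to the identity on $\bar{K}$, then $\sigma$ fixes both $M$ (by assumption) and $\bar{K}$, hence fixes the compositum $M \cdot \bar{K} = \bar{M}$. For surjectivity, given $\tau \in G_K$, I would use that $\tau$ acts as an isometry of $\bar{K}$ for the $p$-adic valuation and therefore extends uniquely by continuity to an isometric automorphism of $\C_p$; being continuous and fixing $K$, this extension fixes the completion $M$ of $K$, and it permutes $\bar{K}$, so it preserves $M \cdot \bar{K} = \bar{M}$. Restricting to $\bar{M}$ yields an element $\sigma \in G_M$ with $\gamma(\sigma) = \tau$. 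The fact that $M \cap \bar{K} = K$ (recorded just before the lemma) ensures that the bijection $\gamma$ is indeed compatible with the Galois correspondence and that one is not losing or gaining any automorphisms.

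The main technical obstacle is the application of Krasner's lemma, which requires controlling the minimal polynomial of $x$ uniformly enough that a nearby polynomial with coefficients in $K$ has a root generating the same extension over $M$; this is standard but must be set up carefully (one needs to approximate well enough relative to the minimum distance between $x$ and its other $M$-conjugates). Everything else is a formal manipulation. As a side remark, once $\gamma$ is known to be a bijection, continuity in both directions (so that $\gamma$ is an isomorphism of topological Galois groups) follows from the fact that the Krull topologies on $G_M$ and $G_K$ are both determined by stabilizers of finite subextensions, which correspond under the identification $\bar{M} = M \cdot \bar{K}$.
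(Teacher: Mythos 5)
Your proof is correct, but it takes a different, more self-contained route than the paper. The paper's proof is a two-line factorization: it invokes the known isomorphisms $\alpha\colon {\rm Aut}_{\rm cont}(\C_p/K)\cong G_K$ and $\beta\colon {\rm Aut}_{\rm cont}(\C_p/M)\cong G_M$ (each given by restriction to $\Kbar$, resp.\ $\barM$), observes that ${\rm Aut}_{\rm cont}(\C_p/K)={\rm Aut}_{\rm cont}(\C_p/M)$ because a continuous automorphism fixing the dense subfield $K$ of $M$ fixes $M$, and then writes $\gamma=\alpha\circ\beta^{-1}$. Your argument effectively unpacks what goes into proving that $\alpha$ and $\beta$ are isomorphisms: the surjectivity step (extend $\tau\in G_K$ to an isometric automorphism of $\C_p$ by continuity, observe it fixes $M$, then restrict to $\barM$) is precisely the content of $\alpha$ being surjective. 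Establishing $\barM=M\cdot\Kbar$ via Krasner's lemma is an additional, correct piece of work that the paper sidesteps. What the paper's formulation buys is brevity by appealing to a standard fact; what yours buys is self-containment and a concrete picture of how automorphisms transfer between the two algebraic closures.

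One small simplification worth noting: for injectivity you do not actually need the compositum identity. If $\sigma\in G_M$ restricts to the identity on $\Kbar$, then since every element of $G_M$ is an isometry (hence continuous) and $\Kbar$ is dense in $\C_p$ (so dense in $\barM$), the automorphism $\sigma$ fixes the closure of $\Kbar$ in $\barM$, which is all of $\barM$. Similarly, the extension $\tilde\tau$ of $\tau\in G_K$ preserves $\barM$ setwise automatically, because it fixes $M$ and therefore maps elements algebraic over $M$ to elements algebraic over $M$; the compositum fact is not needed there either. So $\barM=M\cdot\Kbar$, while true and proved correctly, is somewhat more machinery than strictly required. Finally, your closing remark about $M\cap\Kbar=K$ is more of a sanity check than a logically necessary ingredient here: your surjectivity argument in fact re-derives that equality as a consequence (the continuous extension of $\tau$ fixes $M$, hence $\tau$ fixes $M\cap\Kbar$, forcing $M\cap\Kbar$ to lie in the fixed field $K$), whereas the paper establishes it separately just before the lemma.
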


\begin{proof}
We have natural isomorphisms induced by restriction, namely 
$\alpha\colon{\rm Aut}_{\rm cont}(\C_p/K)\cong G_K$ and $\beta\colon{\rm Aut}_{\rm cont}(\C_p/M)\cong G_M$. Moreover $\gamma=\alpha\circ \beta^{-1}$.
\end{proof}

Let us now denote by $A_M$ the base-change of the abelian variety $A$ to $M$. Let us observe that if $T_p(A_M)$ is the $p$-adic Tate module of $A_M$, as $A$ is defined over $F$ and
$\Kbar\subset \barM$, we have the equality $T_p(A_M)=T_p(A)$ as $\Z_p$-modules and $G_M$-modules, where the action of $G_M$
on $T_p(A)$ is via $\gamma$. From now on we will identify these two Tate modules.

On the other hand, we have the $\cO_{\Kbar}$-module $\Omega:=\Omega^1_{\cO_{\Kbar}/\cO_K}$ and the $\cO_{\barM}$-module 
$\Omega_M:=\Omega^1_{\cO_{\barM}/\cO_M}$ and a natural $\cO_{\Kbar}$-linear map: $f\colon\Omega\to\Omega_M$ induced by the 
inclusions $\cO_{\Kbar}\subset \cO_{\barM}$ and $\cO_K\subset \cO_M$.

\begin{lemma}\label{lemma:differentialisom}
The morphism $1_{\cO_{\barM}}\otimes f\colon\cO_{\barM}\otimes_{\cO_{\Kbar}}\Omega\to \Omega_M$
is a $\cO_{\barM}$, $G_M$-equivariant (the action of $G_M$ is semi-linear) isomorphism.
\end{lemma}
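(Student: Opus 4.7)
The plan is to reduce everything to Fontaine's explicit description of both differential modules. By \autoref{thm:Fontainedifferentials} applied to both $\Kbar/K$ and $\barM/M$, one obtains canonical isomorphisms $\Omega \cong (\Kbar/\underline{a}_K)(1)$ and $\Omega_M \cong (\barM/\underline{a}_M)(1)$, normalized by the same fixed compatible system $(\varepsilon_n)$ of $p$-power roots of unity. With this common normalization, $f$ becomes the evident map $\Kbar/\underline{a}_K \to \barM/\underline{a}_M$ induced by the inclusion $\Kbar \hookrightarrow \barM$; this is well defined since $\underline{a}_K = \underline{a}_M \cap \Kbar$.

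The second ingredient is flatness: I would check that $\cO_{\barM}$ is a flat $\cO_{\Kbar}$-module. Both rings are valuation rings for the unique extension of the $p$-adic valuation to $\C_p$, and $\cO_{\barM}$ is torsion-free over $\cO_{\Kbar}$; since every torsion-free module over a valuation ring is flat, the claim follows immediately.

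With flatness in hand I would apply $\cO_{\barM}\otimes_{\cO_{\Kbar}}(-)$ to the short exact sequence
\[
0 \longrightarrow \underline{a}_K \longrightarrow \Kbar \longrightarrow \Kbar/\underline{a}_K \longrightarrow 0
\]
of $\cO_{\Kbar}$-modules. Fix $\pi_0 \in \Kbar$ with $v(\pi_0) = 1/(p-1)$; then $\underline{a}_K = \pi_0^{-1}\cO_{\Kbar}$ (free of rank one) and $\underline{a}_M = \pi_0^{-1}\cO_{\barM}$. Consequently $\cO_{\barM}\otimes_{\cO_{\Kbar}}\Kbar = \cO_{\barM}[1/p] = \barM$, and the image of $\cO_{\barM}\otimes_{\cO_{\Kbar}}\underline{a}_K \cong \cO_{\barM}$ in $\barM$ equals $\pi_0^{-1}\cO_{\barM} = \underline{a}_M$. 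Taking cokernels yields $\cO_{\barM}\otimes_{\cO_{\Kbar}}\Omega \cong \barM/\underline{a}_M \cong \Omega_M$, and unwinding the identifications shows that this isomorphism is precisely $1_{\cO_{\barM}}\otimes f$. The $\cO_{\barM}$-linearity is built into the construction, and the $G_M$-semi-linear equivariance is inherited from the fact that each ingredient (Fontaine's isomorphism via the common $(\varepsilon_n)$, the Galois-stable generators of $\underline{a}_K$ and $\underline{a}_M$, and the inclusion $\Kbar \subset \barM$) is compatible with the $G_M$-action via the restriction $\gamma \colon G_M \xrightarrow{\sim} G_K$ of \autoref{lemma:gammaisom}.

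No step is genuinely difficult; the only point requiring some care is ensuring that the two Fontaine identifications really intertwine $f$ with the natural inclusion-induced map---which is why it is essential to use the same sequence $(\varepsilon_n)$ in both applications of \autoref{thm:Fontainedifferentials}.
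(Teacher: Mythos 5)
Your proof is correct and rests on the same underlying structural fact as the paper's, namely Fontaine's description of $\Omega$ and $\Omega_M$, but it packages the argument differently. The paper's proof is a one-liner: it observes that $\cO_{\barM}\otimes_{\cO_{\Kbar}}\Omega$ and $\Omega_M$ admit the same $\cO_{\barM}$-presentation by the generators $d\zeta_n/\zeta_n$ with the relations $p(d\zeta_{n+1}/\zeta_{n+1})=d\zeta_n/\zeta_n$, and concludes directly. You instead identify both sides with Fontaine's explicit quotients $(\Kbar/\underline{a}_K)(1)$ and $(\barM/\underline{a}_M)(1)$, then compute the base change of the first via flatness of $\cO_{\barM}$ over the valuation ring $\cO_{\Kbar}$ and the observation that $\underline{a}_K=\pi_0^{-1}\cO_{\Kbar}$ is free of rank one with $\pi_0^{-1}\cO_{\barM}=\underline{a}_M$. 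Your route buys a cleaner justification that the abstract isomorphism really is $1_{\cO_{\barM}}\otimes f$, since you track the map through the common normalization by $(\varepsilon_n)$; the paper leaves this implicit. One small point worth noting for both arguments: the paper's \autoref{thm:Fontainedifferentials} is stated only for $\overline{K}/K$, so applying it to $\barM/M$ requires the more general form of Fontaine's Th\'eor\`eme $1'$ for an arbitrary complete DVR with perfect residue field --- both your proof and the paper's proof silently rely on this, so it is not a defect in yours specifically.
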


\begin{proof}
Both modules are generated over $\cO_{\barM}$ by the family 
\[ 
\left(\frac{d\zeta_n}{\zeta_n}\right)_{n\ge 1},
\]
with unique relations 
\[
p\left(\frac{d\zeta_{n+1}}{\zeta_{n+1}}\right)=\frac{d\zeta_n}{\zeta_n}
\] 
for all $n\ge 1$ where $(\zeta_n)_{n\ge 1}$ is a compatible family of $p^n$-th roots of $1$ in $\Kbar$. This implies that $1_{\cO_{\barM}}\otimes f$ is an isomorphism.
\end{proof}

We denote by $\varphi_{A_M}\colon T_p(A_M)\to {\rm Lie}(A_M)(M)\otimes_M\C_p(1)$ Fontaine's integration map for $A_M$, where here $\C_p=V_p(\Omega_M)=V_p(\Omega)$. Note that we have the following diagram:
\[
\begin{array}{cccccccccc}
T_p(A_M)&\stackrel{\varphi_{A_M}}{\longrightarrow}&{\rm Lie}(A_M)(M)\otimes_M\C_p(1)\\
||&&\uparrow \cong\\
T_p(A)&\stackrel{\varphi_A}{\longrightarrow}&{\rm Lie}(A)(K)\otimes_K\C_p(1)
\end{array}
\]
It is obvious, given the definitions and \autoref{lemma:gammaisom} and \autoref{lemma:differentialisom}, that this diagram is commutative, and hence to prove the theorem it is enough to prove that $\varphi_{A_M}$ is injective.

Therefore we can re-denote things as follows:~let $K$ be {\bf the completion} of $\mQ_p^{\rm{ur}}$, $\Kbar$ an algebraic closure of $K$, $\C_p$ the completion of $\Kbar$ and $A$ an abelian variety with good reduction over $K$ satisfying 
$T_p(A)^{G_K}=0$. 

\begin{theorem}[Yeuk Hay Joshua Lam, Alexander Petrov]
\label{thm:fontcompleted}
In the notations above, if $\varphi_A\colon T_p(A)\to {\rm Lie}(A)(K)\otimes_K\C_p(1)$ denotes Fontaine's integration map, then $\varphi_A$ is injective.
\end{theorem}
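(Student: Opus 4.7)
The plan is to deduce injectivity of $\varphi_A$ from the hypothesis $T_p(A)^{G_K}=0$ by studying $G_K$-subrepresentations of $V_p(A)=T_p(A)\otimes_{\Z_p}\Q_p$ via the Hodge--Tate decomposition. Recall from \autoref{subsec:comparision} that the Hodge--Tate decomposition provides a $G_K$-equivariant, $\C_p$-linear splitting
\[
T_p(A)\otimes_{\Z_p}\C_p\;\cong\;\bigl(\Lie(A^\vee)^\vee\otimes_K\C_p\bigr)\;\oplus\;\bigl(\Lie(A)\otimes_K\C_p(1)\bigr),
\]
under which $\varphi_A\otimes 1_{\C_p}$ is precisely the projection onto the second summand. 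Hence $\ker(\varphi_A\otimes 1_{\C_p})$ is the weight-zero summand $\Lie(A^\vee)^\vee\otimes_K\C_p$, and an element $x\in T_p(A)$ with $\varphi_A(x)=0$ has the property that its image in $V_p(A)\otimes_{\Q_p}\C_p$ lies in this weight-zero summand.

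Given such an $x$, let $V\subset V_p(A)$ denote the $\Q_p$-span of the $G_K$-orbit of $x$. Then $V$ is a finite-dimensional $G_K$-subrepresentation of $V_p(A)$, and since the weight-zero summand is $G_K$-stable, the whole subspace $V\otimes_{\Q_p}\C_p$ lies in $\Lie(A^\vee)^\vee\otimes_K\C_p$; that is, $V$ is a Hodge--Tate representation with all Hodge--Tate weights equal to $0$. Because $A$ has good reduction, $V_p(A)$ is a crystalline $G_K$-representation, and crystalline representations are closed under subobjects, so $V$ is itself crystalline.

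I now invoke the following standard fact from $p$-adic Hodge theory: a crystalline $G_K$-representation with all Hodge--Tate weights equal to $0$ must be trivial, given that the residue field $\overline{\F}_p$ of $K$ is algebraically closed. Indeed, the trivial filtration on $D_{\rm cris}(V)$ combined with weak admissibility forces all Newton slopes to vanish, and the Dieudonn\'e--Manin classification over $K$ (whose residue field is algebraically closed) identifies the resulting slope-zero filtered $\varphi$-module with an unramified $G_K$-representation; every such representation of $G_K=G_{\widehat{\Q_p^{\rm ur}}}$ is trivial because $\widehat{\Q_p^{\rm ur}}$ admits no nontrivial unramified extensions. Therefore $V=V^{G_K}$, and so $x\in V\cap T_p(A)\subset T_p(A)^{G_K}=0$, giving $x=0$. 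The main technical issue is ensuring that the crystalline machinery (Hodge--Tate decomposition, weak admissibility, Dieudonn\'e--Manin) applies over $K=\widehat{\Q_p^{\rm ur}}$, rather than only over finite extensions of $\Q_p$; fortunately, $K$ is complete discretely valued with perfect residue field, which places it within the scope of Fontaine's theory, and the preliminary reduction at the start of the appendix ensures that this is precisely the setting in which we work.
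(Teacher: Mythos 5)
Your proof is correct, but it takes a genuinely different route from the paper's. Both arguments reduce to the key fact that a crystalline $G_K$-representation with all Hodge--Tate weights $0$ must be trivial, using that $K = \widehat{\Q_p^{\rm ur}}$ has algebraically closed residue field; you apply this to the $\Q_p$-span $V$ of the $G_K$-orbit of $x$, while the paper applies it directly to $S := \ker(\varphi_A \otimes_{\Z_p} \Q_p)$ (a cosmetic difference, since $S$ is already a $G_K$-subrepresentation with the same Hodge--Tate property). The real divergence is in how the triviality fact is established. The paper invokes Sen's theorem: Hodge--Tate weight $0$ forces the Galois image to be finite, so $\rho_S$ factors through $\Gal(L/K)$ for a finite totally ramified $L/K$; then the computation $D_{\rm cris}(S) = \bigl(S \otimes_{\Q_p} (B_{\rm cris})^{G_L}\bigr)^{\Gal(L/K)} = S^{\Gal(L/K)} \otimes_{\Q_p} K$ together with the crystalline dimension equality $\dim_K D_{\rm cris}(S) = \dim_{\Q_p} S$ forces $S = S^{G_K}$. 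Your route instead runs through the filtered-module side: weak admissibility with the trivial filtration forces all Frobenius slopes of $D_{\rm cris}(V)$ to vanish (any isoclinic piece of negative slope would violate $t_N \geq t_H = 0$), and a slope-zero $\varphi$-module over $K$ with trivial filtration corresponds to an unramified $G_K$-representation, which is trivial since $K$ has no nontrivial unramified extensions. Your version avoids Sen's continuous-cohomology theorem and stays entirely within the crystalline/Dieudonn\'e--Manin formalism; the paper's version avoids invoking the equivalence of categories and relies only on the dimension formula for $D_{\rm cris}$. Either reference to the literature is legitimate, and the two proofs illuminate the same obstruction from opposite sides of the $p$-adic Hodge-theoretic dictionary.
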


The proof of \autoref{thm:fontcompleted} presented below was sent to us, independently, by Yeuk Hay Joshua Lam and Alexander Petrov. By the above remarks \autoref{thm:fontcompleted} implies \autoref{thm:Fontaineunramified}.

\begin{proof}
Let $S:=\ker(\varphi_{A}\otimes_{\Z_p}\Q_p)$.
It suffices to show that $S=0$. 
As $V_p(A)$  is a crystalline representation of $G_K$, $S$ is a crystalline representation with respect to the same Galois group.
Consider the following commutative diagram with the first and last rows exact:
$$
\begin{array}{cccccccccc}
0&\longrightarrow&S&\stackrel{f}{\longrightarrow}&V_p(A)&\stackrel{\varphi_A}{\longrightarrow}&{\rm Lie}(A)\otimes_K\C_p(1)\\
&&\cap&&\cap&&||\\
0&\longrightarrow&S\otimes_{\Q_p}\C_p&\stackrel{f\otimes 1}{\longrightarrow}&V_p(A)\otimes_{\Q_p}\C_p&\stackrel{\varphi_A\otimes 1}{\longrightarrow}&{\rm Lie}(A)\otimes_K\C_p(1)& & \\
&&\cap&&||&&||\\
0&\longrightarrow&(\C_p)^g&\longrightarrow&V_p(A)\otimes_{\Q_p}\C_p&\stackrel{\varphi_A\otimes 1}{\longrightarrow}&{\rm Lie}(A)\otimes_K\C_p(1)&\longrightarrow&0
\end{array}
$$
Observe that the middle row is not exact but we have $(\varphi_A\otimes 1)\circ (f\otimes 1)=0$, therefore $(f\otimes 1)(S\otimes_{\Q_p}\C_p)\subset {\rm ker}(\varphi_A\otimes 1)=(\C_p)^g$, i.e. we have the maps relating the second to the third rows of the diagram.

It follows from Tate's result \cite[Theorem 2]{tate:pdiv} that $S$ is a $p$-adic representation of $G_K$, which is Hodge-Tate with all Hodge-Tate weights $0$. 
By \cite[Corollary]{Sen:ContinuousCohomology}, this implies that the image of the Galois representation $\rho_S\colon G_K\longrightarrow {\rm End}_{\Q_p}(S)$ is finite. 
Let then $L/K$ be the finite, Galois extension, obviously totally ramified, such that $\rho_S$ factors through $\rho_S\colon {\rm Gal}(L/K)\longrightarrow {\rm End}_{\Q_p}(S)$.  

We wish to show that $S = 0$.  As $S$ is a crystalline $G_K$-representation we have
\[
\begin{array}{ll}
D_{\rm cris}(S) &=\bigl(S\otimes_{\Q_p}B_{\rm cris} \bigr)^{G_K}=
\Bigl(\bigl(S\otimes_{\Q_p}B_{\rm cris}  \bigr)^{G_L}  \Bigr)^{{\rm Gal}(L/K)} = \Bigl(S\otimes_{\Q_p}(B_{\rm cris})^{G_L}\Bigr)^{{\rm Gal}(L/K)} \\
&= S^{{\rm Gal}(L/K)}\otimes_{\Q_p} K.
\end{array}
\]
The last equality follows from \cite[Proposition 5.1.2]{Fontaine:PadicRep}. 
Therefore, from the above we have ${\rm dim}_{\Q_p}(S)={\rm dim}_K(D_{\rm cris}(S))={\rm dim}_{\Q_p}(S^{{\rm Gal}(L/K)}).$ 
In other words, we have that $S=S^{{\rm Gal}(L/K)}=S^{G_{K}}.$
But $S\subset V_p(A)$ and $V_p(A)^{G_K}=0$, therefore, $S=0$.
 \end{proof}

\begin{remark}
The proof of \autoref{thm:fontcompleted} shows that, even without the assumption $V_p(A)^{G_K}=0$, we have $\ker(\varphi_{A}\otimes_{\Z_p}\Q_p) = V_p(A)^{G_K}$. Therefore, in this general case we could have defined $A(\Kbar)^{(p)}:=A(\Kbar)/A_{\rm tor}(K)$.
Using the ideas of this article one might be able to obtain a $G_K$-equivariant map $A(\Kbar)^{(p)}\lra ({\rm Lie}(A)(K)\otimes_K\C_p(1))/\varphi_A(T_p(A))$ and study it. We did not pursue this idea in this paper and only studied the situation when $V_p(A)^{G_K}=0$.
\end{remark}

  \bibliography{refs}{}
\bibliographystyle{amsalpha}

 \end{document}